\theoremstyle{plain}
\newtheorem{Pocz}{Poczatek}[section]
\newtheorem{Proposition}[Pocz]{Proposition}
\newtheorem{Theorem}[Pocz]{Theorem}
\newtheorem{Corollary}[Pocz]{Corollary}
\newtheorem{Lemma}[Pocz]{Lemma}
\newtheorem{Observation}[Pocz]{Observation}
\newtheorem{Example}[Pocz]{Example}
\theoremstyle{definition}
\newtheorem{Definition}[Pocz]{Definition}
\theoremstyle{remark}
\newtheorem{Remark}[Pocz]{Remark}
\DeclareMathOperator*{\diam}{diam}
\numberwithin{equation}{section}
\title[Coarse Freundenthal compactification and ends of groups]
{Coarse Freundenthal compactification and ends of groups}
\author{Yuankui Ma}
\address{Xi'an Technological University, No.2 Xuefu zhong lu, Weiyang district, Xi'an, China 710021}
\email{mayuankui@xatu.edu.cn}
\author{Jerzy Dydak}
\address{University of Tennessee, Knoxville, TN 37996, USA}
\email{jdydak@utk.edu}
\address{Xi'an Technological University, No.2 Xuefu zhong lu, Weiyang district, Xi'an, China 710021}
\email{jdydak@gmail.com}
\date{ \today
}
\keywords{dimension, coarse geometry, ends of groups, Freundenthal compactification, Higson corona}
\subjclass[2000]{Primary 54D35; Secondary 20F69}
\begin{document}
\maketitle
\begin{center}
\today
\end{center}

\tableofcontents

\begin{abstract}
A coarse compactification of a proper metric space $X$ is any compactification of $X$ that is dominated by its Higson compactification. In this paper we describe the maximal coarse compactification of $X$ whose corona is of dimension $0$. In case of geodesic spaces $X$, it coincides with the Freundenthal compactification of $X$. As an application we provide an alternative way of extending the concept of the number of ends from finitely generated groups to arbitrary countable groups. We present a geometric proof of a generalization of Stallings' theorem by showing that any countable group of two ends contains an infinite cyclic subgroup of finite index.
Finally, we define ends of arbitrary coarse spaces.
\end{abstract}

\section{Introduction}

Historically, as noted in \cite{DK} on p.287, 
ends are the oldest coarse topological notion and were used by Freundenthal in 1930 in his famous compactification (see \cite{Peschke}  for information about theorems in this section):

\begin{Theorem}
Suppose $X$ is a $\sigma$-compact locally compact and locally connected Hausdorff space. It has a compactification $\bar X$ such that
$\bar X\setminus X$ is of dimension $0$ and $\bar X$ dominates any compactification
$\hat X$ of $X$ whose corona is of dimension $0$.
\end{Theorem}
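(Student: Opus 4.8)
The plan is to realize $\bar X$ as $X$ glued to the Stone space of a Boolean algebra built from $X$. Call $A\subseteq X$ \emph{bounded} if $\overline A$ is compact, let $\mathcal B$ be the family of all $A\subseteq X$ whose boundary $\partial A=\overline A\cap\overline{X\setminus A}$ is compact, and let $\mathcal I\subseteq\mathcal B$ be the ideal of bounded sets; since $\partial A=\partial(X\setminus A)$ and $\partial(A\cup B)\subseteq\partial A\cup\partial B$, the collection $\mathcal B$ is a Boolean subalgebra of the power set of $X$. Put $\mathcal A=\mathcal B/\mathcal I$, let $Z$ be its Stone space (compact, Hausdorff, zero-dimensional), with points regarded as ultrafilters on $\mathcal A$, set $\bar X=X\sqcup Z$, and for $A\in\mathcal B$ write $\tilde A=A\cup\{z\in Z:[A]\in z\}$. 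First I would check that the open subsets of $X$ together with the sets $\tilde A$ ($A\in\mathcal B$ open) form a basis of a compact Hausdorff topology on $\bar X$ in which $X$ is open and dense and $Z=\bar X\setminus X$ carries its (zero-dimensional) Stone topology. Local compactness is used here to produce, around every point and every compact subset of $X$, a relatively compact open neighbourhood, hence a member of $\mathcal B$; the Hausdorff axiom and density of $X$ then follow routinely, and compactness follows because a basic cover of $\bar X$ restricts to a clopen cover of the compact space $Z$, so finitely many $\tilde A_1,\dots,\tilde A_n$ cover $Z$, after which $\bar X\setminus\bigcup_i\tilde A_i=X\setminus\bigcup_i A_i$ has class $0$ in $\mathcal A$, hence is a bounded closed subset of $X$, hence compact, hence covered by finitely many further members of the cover.

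Now let $\hat X$ be a compactification of $X$ whose corona $C=\hat X\setminus X$ has dimension $0$, so $C$ is the Stone space of its Boolean algebra $\operatorname{Clop}(C)$ of clopen subsets. \textbf{The heart of the proof}, and the step I expect to be the main obstacle, is constructing a Boolean embedding $\phi\colon\operatorname{Clop}(C)\hookrightarrow\mathcal A$: one must turn an abstract clopen partition of the corona $C$ into a genuine partition of $X$ by two sets whose common boundary is compact, i.e.\ lies in $\mathcal B$. Given clopen $V\subseteq C$, normality of $\hat X$ yields a carefully chosen open $W$ with $V\subseteq W\subseteq\overline W\subseteq\hat X\setminus(C\setminus V)$; with $W'=\hat X\setminus\overline W$ the ``seam'' $L=\hat X\setminus(W\cup W')=\partial W$ is then a compact subset of $X$, so local compactness gives a relatively compact open $N\supseteq L$ in $X$, and one verifies that $A_V:=(W\cup N)\cap X$ lies in $\mathcal B$ and satisfies $\overline{A_V}^{\hat X}\cap C=V$ and $\overline{X\setminus A_V}^{\hat X}\cap C\subseteq C\setminus V$. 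A short computation then shows that $[A_V]$ depends only on $V$, that $V\mapsto[A_V]$ respects complements and finite unions, and that $[A_V]=0$ forces $V=\emptyset$; hence $\phi(V):=[A_V]$ is a well-defined Boolean embedding. Thus normality of $\hat X$, zero-dimensionality of $C$, and local compactness of $X$ all enter here in concert; once $\phi$ is in hand the rest is essentially bookkeeping.

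Dualizing, $\phi$ induces a continuous surjection $g\colon Z\twoheadrightarrow C$; define $f\colon\bar X\to\hat X$ to be the identity on $X$ and $g$ on $Z$. It remains to see $f$ is a continuous surjection restricting to the identity of $X$. Surjectivity is automatic, since $f(\bar X)$ is compact, hence closed, and contains the dense set $X$; continuity is clear on the open set $X$, so the only point is continuity at $z\in Z$. Given open $\Omega\ni f(z)=g(z)$ in $\hat X$, use zero-dimensionality of $C$ to choose a clopen $V$ with $g(z)\in V\subseteq\Omega\cap C$, and run the construction of $A_V$ with the auxiliary sets $W$, $W'$, $N$ all inside $\Omega$ (possible since normality lets us arrange $\overline W\subseteq\Omega$ and $L\subseteq\Omega$); this gives an open $A_V\in\mathcal B$ with $\overline{A_V}^{\hat X}\subseteq\Omega$ and $[A_V]\in z$, so $\tilde A_V$ is a basic neighbourhood of $z$ with $f(\tilde A_V)\subseteq\overline{A_V}^{\hat X}\subseteq\Omega$. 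Hence $\bar X$ dominates $\hat X$, and since the corona of $\bar X$ is the zero-dimensional space $Z$, this also makes $\bar X$ maximal among such compactifications. (Local connectedness plays no role in the existence of $\bar X$; when $X$ is in addition connected, it is what makes $Z$ coincide with Freundenthal's classical space of ends.)
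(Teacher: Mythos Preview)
The paper does not prove this theorem. It is stated in the introduction as Freudenthal's classical 1930 result, with a pointer to \cite{Peschke} for background; the paper's own contributions begin in Section~2 with the \emph{coarse} Freudenthal compactification of proper metric spaces. So there is no ``paper's own proof'' to compare against.

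That said, your Stone-space approach is a sound and standard route to this classical fact, and it is worth contrasting with the construction the paper recalls in Definition~1.2. There, ends are built concretely as nested sequences of components of complements of an exhausting compact chain, and the topology is given by hand; local connectedness is genuinely used to ensure that components of $X\setminus K_i$ are open, so that the basis described actually generates a topology and the resulting space is compact Hausdorff. Your construction instead packages everything algebraically: the Boolean algebra $\mathcal B/\mathcal I$ of sets with compact boundary modulo relatively compact sets, its Stone space as corona, and Stone duality to produce the dominating map. This has the advantage you note---the existence and maximality of a zero-dimensional-corona compactification go through without local connectedness---while the components-of-complements picture is what ties the abstract $Z$ to the classical end space when $X$ is locally connected.

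Your outline is essentially correct; the one place I would urge more care is the continuity step at $z\in Z$. You need the particular representative $A_V$ you build inside $\Omega$ to satisfy $[A_V]=\phi(V)$, i.e.\ to lie in the same $\mathcal I$-class as the $A_V$ used to \emph{define} $\phi$. You assert that $[A_V]$ depends only on $V$, and this is true, but the verification (any two choices of $W,N$ differ by a bounded set) deserves an explicit line, since the argument for $f(\tilde A_V)\subseteq\Omega$ hinges on $[A_V]\in z$, which in turn rests on $\phi(V)=[A_V]$ for \emph{this} representative. Also, when you pick $N\supseteq L$ relatively compact inside $\Omega$, make sure you actually take $\overline N\subseteq\Omega$ (possible by local compactness and regularity), so that $\overline{A_V}^{\hat X}\subseteq\Omega$ holds. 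With those details spelled out, the argument is complete.
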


\begin{Definition}
A \textbf{Freundenthal end} is a decreasing sequence $\{U_i\}_{i\ge i}$ of components of sets $X\setminus K_i$, where $K_i$ are compact, $K_i\subset int(K_{i+1})$ for each $i\ge 1$, and $$\bigcup\limits_{i=1}^\infty K_i=X.$$
The space of ends of $X$ is denoted by $Ends(X)$
\end{Definition}

The topology on $X\cup Ends(X)$ is induced by the following basis:\\
1. It includes all open subsets of $X$ with compact closure,\\
2. It includes any component $U$ of $X\setminus K_i$ union all ends containing $U$.

\begin{Theorem}
(Freudenthal) A path connected topological group has at most two ends.
\end{Theorem}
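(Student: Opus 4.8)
The plan is to reduce the statement to two facts about the corona $Ends(G)$ and then finish with a short combinatorial argument. The first fact is that $Ends(G)$ is a compact Hausdorff space of covering dimension $0$, hence totally disconnected; this is exactly the content of the theorem of Freudenthal quoted above, which applies because $Ends(G)$ is only defined when $G$ is $\sigma$-compact, locally compact and locally connected --- hypotheses I therefore assume throughout. The second, and decisive, fact is that the translation action of $G$ on $Ends(G)$ is trivial. For the setup, recall that a connected topological group is generated by every neighborhood of the identity, so $G=\langle K\rangle$ for some compact, connected, symmetric neighborhood $K$ of $e$ (such a $K$ exists by local compactness and local connectedness); then $\{K^i\}_{i\ge 1}$ is an exhaustion of $G$ by compact connected symmetric sets with $K^i$ inside the interior of $K^{i+1}$, which I use as the defining sequence of compacta when speaking of ends. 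I also use the elementary facts that, for compact $C\subseteq G$, every component $U$ of $G\setminus C$ is open, $\overline U\setminus U\subseteq \partial C$, and $\overline U\cap C\neq\emptyset$ (these follow from connectedness and local connectedness of $G$).

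Next I would show that every translation fixes every end. A left translation $L_g$ is a homeomorphism of $G$, hence extends over the Freudenthal compactification and induces a homeomorphism of $Ends(G)$: if an end $\omega$ is represented by a decreasing sequence $(V_i)$ with $V_i$ a component of $G\setminus K^i$, then $g\cdot\omega$ is represented by $(gV_i)$. The key technical point is that the orbit map $\phi_\omega\colon G\to Ends(G)$, $g\mapsto g\cdot\omega$, is continuous. This rests on the estimate that a small enough neighborhood $N\subseteq K$ of $e$ satisfies $N\cdot\overline{V_{i+1}}\subseteq V_i$: indeed $\overline{V_{i+1}}\setminus V_{i+1}$ is a compact subset of $\partial K^{i+1}$ lying in the open set $V_i$ (as $\overline{V_{i+1}}$ is connected, avoids $K^i$, and contains $V_{i+1}$), so it is pushed into $V_i$ by a small $N$, whereas $K\cdot V_{i+1}$ is connected, disjoint from $K^i$ (a meeting point would force $V_{i+1}$ to meet $K^{i+1}$), and contains $V_{i+1}$, hence lies in $V_i$. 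Continuity of $\phi_\omega$ follows by unwinding the basis of the end topology. Since $G$ is connected, $\phi_\omega(G)$ is a connected subset of the totally disconnected space $Ends(G)$, so it is the single point $\phi_\omega(e)=\omega$; thus $g\cdot\omega=\omega$ for all $g\in G$.

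It remains to rule out three ends. Suppose $G$ had at least three. Since each unbounded component of a complement $G\setminus C$ receives at least one end and distinct components receive distinct ends, three distinct ends are separated at some finite level, so there is an $i$ for which $C:=K^i$ has at least three unbounded components $U_1,U_2,U_3$; for each $j$ fix an end $\omega_j$ pointing into $U_j$. As $U_1$ is unbounded it contains an entire translate of $C$: choosing $g\in U_1$ far enough from $C$ in the left-invariant word metric of $K$, the set $gC=gK^i$ is connected, disjoint from $C$, and contains $g\in U_1$, so $gC\subseteq U_1$. Now $G\setminus gC$ has the distinct components $gU_1,gU_2,gU_3$ among others, and exactly one component $\mathcal C_0$ of $G\setminus gC$ contains the connected set $C$. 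Since $\mathcal C_0$ can equal at most one of $gU_1,gU_2,gU_3$, at least two of these are disjoint from $\mathcal C_0$, hence from $C$; in particular some $gU_j$ with $j\neq 1$ is disjoint from $C$. Such a $gU_j$ is connected, contained in $G\setminus C$, and its closure meets $gC\subseteq U_1$, so $gU_j\subseteq U_1$. On the other hand $g\cdot\omega_j=\omega_j$ means that at level $gC$ the end $\omega_j$ points into $gU_j$; combined with $gU_j\subseteq U_1$, this forces the component of $\omega_j$ at level $C$ to be $U_1$, i.e.\ $U_j=U_1$ --- impossible since $j\neq 1$. Hence $G$ has at most two ends.

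I expect the only real obstacle to be the continuity of the translation action on $Ends(G)$; once that is in hand, connectedness of $G$ makes the action trivial and the rest is bookkeeping with components of complements of compacta. What makes continuity work is that the topological boundary of each component $V_i$ lies in the compactum $K^i$, so the non-compactness of $V_i$ is felt only ``at infinity'', where a small group element cannot carry a point out of its own component --- and it is precisely this triviality of a continuous action, invisible at the level of a bare space of ends, that a connected group contributes.
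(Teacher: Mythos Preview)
The paper does not supply its own proof of this theorem: it is quoted in the introduction as a classical result of Freudenthal, with a pointer to Peschke's survey, and no argument is given. So there is nothing to compare against; I can only assess your proof on its own merits.

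Your argument is correct and follows the classical line. A couple of remarks. First, the continuity step is slightly over-engineered: once you know $K\cdot V_{i+1}$ is connected, contains $V_{i+1}$, and misses $K^i$ (your computation $v=k^{-1}(kv)\in K^{i+1}$ does this), you already have $K\cdot V_{i+1}\subseteq V_i$, which is exactly what is needed to see that $h\cdot\omega$ lies in the basic neighborhood $V_i\cup Ends(V_i)$ for every $h\in K$; the detour through $\overline{V_{i+1}}$ and the small $N$ is unnecessary. Second, in the three-ends step your concluding sentence is a bit compressed; the clean way to finish is to take $L:=C\cup gC$ and note that the component of $\omega_j$ in $G\setminus L$ must lie in both $U_j$ and $gU_j\subseteq U_1$, which are disjoint since $j\neq 1$.

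It is worth noting that the paper does prove an analogue of this statement in a different setting: for countable (discrete) groups it shows the number of ends is $0$, $1$, $2$, or infinite. There the mechanism is inverted relative to yours. You use connectedness of $G$ to force the action on $Ends(G)$ to be trivial and then derive a contradiction from three ends; the paper's Lemma~\ref{ActingOnThreeEnds} shows directly that a (non--locally-finite) group \emph{cannot} act trivially on two out of three disjoint NCC sets, and then passes to a finite-index subgroup on which the action is trivial to get the contradiction. Your translate-and-trap argument with $gC\subseteq U_1$ is the topological cousin of the chain argument in that lemma.
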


\begin{Theorem}
 (H. Hopf) Let $G$ be a finitely generated discrete group acting on a space $X$ by covering transformations. Suppose the orbit space $B:=X/G$ is compact. Then (i) and (ii), below, hold.\\
(i) The end space of $X$ has 0, 1 or 2 (discrete) elements or is a Cantor space.\\ 
(ii) If $G$ also acts on $Y$ satisfying the hypotheses above, then $X$ and $Y$ have
homeomorphic end spaces.
\end{Theorem}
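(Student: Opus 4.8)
Write $e(X)$ for the cardinality of $Ends(X)$. The plan is to reduce (i) to two topological facts about $Ends(X)$ and prove those from cocompactness of the action, and to obtain (ii) from coarse invariance of ends. First, since $G$ is countable and $X/G$ is compact, $X$ is a countable union of translates of a compact set, hence $\sigma$-compact; thus the Freundenthal theorem quoted above applies, and $Ends(X)$ is the corona of the Freundenthal compactification $\bar X$, in particular compact and zero-dimensional. It is also metrizable: $X$ is locally compact Hausdorff and Lindel\"of, so each complement $X\setminus K$ of a compactum has only countably many components, and $Ends(X)$ is the inverse limit of an inverse sequence of countable discrete sets (of unbounded complementary components) taken along an exhaustion of $X$ by compacta. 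By the classical structure theory of compact metrizable zero-dimensional spaces, a nonempty such space is a Cantor space exactly when it is perfect. So (i) follows once we show: (a) if $e(X)\ge 3$ then $e(X)=\infty$; and (b) if $e(X)=\infty$ then $Ends(X)$ has no isolated point. The covering action of $G$ on $X$ is by proper homeomorphisms, so by functoriality of the Freundenthal compactification it induces an action of $G$ on $\bar X$, restricting to one on $Ends(X)$, by homeomorphisms. Fix a compact connected $K_0\subseteq X$ with $G\cdot K_0=X$ (possible since $X$ is connected, locally connected and locally compact with compact quotient).

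For (a) I would use a doubling argument. Suppose $X\setminus K$ has $c\ge 3$ unbounded components; enlarging $K$ to a connected compactum containing $K\cup K_0$ --- which cannot decrease the number of unbounded complementary components --- we may assume $K$ is connected and $G\cdot K=X$, with unbounded complementary components $U_1,U_2,\dots$. Because $G\cdot K=X$ and $U_1$ is unbounded, proper discontinuity gives $g\in G$ with $gK\subseteq U_1$ and $gK\cap K=\emptyset$. As $K$ is connected, $X\setminus U_1$ is connected, hence lies in a single component of $X\setminus gK$; so the remaining $c-1$ unbounded components of $X\setminus gK$ all lie inside $U_1$, and $U_1\setminus gK$ has at least $c-1$ unbounded components. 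Counting the unbounded components of $X\setminus(K\cup gK)$ --- one from each $U_i$ with $i\ne 1$, plus at least $c-1$ inside $U_1$ --- gives at least $2c-2>c$ of them. Iterating, re-connectifying the compactum at each stage, yields $e(X)=\infty$.

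For (b), suppose toward a contradiction that $e_0\in Ends(X)$ is isolated. By the description of the topology on $X\cup Ends(X)$, $e_0$ has a one-ended clopen neighborhood $\widehat U=U\cup\{e_0\}$ with $U$ a component of some $X\setminus K$; enlarging and connectifying $K$ and then passing to the component toward $e_0$, we may take $K$ connected with $G\cdot K=X$, and by (a) we have $e(X)\ge 3$. The idea is that $U$ is a ``collar'' on the single end $e_0$: if $g$ fixes $e_0$ then $gU$ and $U$ are cofinal neighborhoods of $e_0$, so one-endedness of $U$ forces $gU\triangle U$ to be bounded, while if $ge_0\ne e_0$ then $gU\cap U$ is bounded. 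Using this dichotomy and cocompactness, the plan is to show, first, that the orbit $G\cdot e_0$ cannot be infinite --- an infinite orbit of isolated ends would have a limit point, and the resulting almost-disjoint family of collars $\{gU\}_{g\in G}$ is incompatible with $G\cdot K=X$ --- and, second, that when the finite-index stabilizer $G_{e_0}$ acts cocompactly while almost preserving the one-ended $U$, the complement $X\setminus U$ is itself, up to a bounded set, a single one-ended neighborhood, whence $e(X)\le 2$, contradicting $e(X)\ge 3$. Making these two reductions precise is the main obstacle of the whole argument: the doubling in (a) is essentially combinatorial, whereas (b) requires genuinely controlling how a cocompact action fills $X$ near each of its ends.

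Finally, for (ii) I would invoke coarse invariance of ends. Equip $X$, and likewise $Y$, with a coarse structure compatible with its topology and with the $G$-action; then, for any base point, the orbit map from $G$ with a word metric to $X$ is a coarse equivalence --- a topological Milnor--\v{S}varc lemma, using that $G\cdot K_0=X$ makes the orbit coarsely dense, that proper discontinuity makes the map proper, and that connectedness and local connectedness of $X$ control coarse connectivity. Hence $X$ and $Y$ are both coarsely equivalent to $G$, and since the space of ends (the corona of the Freundenthal compactification) is a coarse invariant, $Ends(X)\cong Ends(G)\cong Ends(Y)$. Combined with (i) applied to $X$ this also reproves the dichotomy for every such $X$; equivalently, one may prove (i) only for $G$ and transport it along (ii).
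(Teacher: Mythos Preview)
The paper does not prove Hopf's theorem: it is quoted in the introduction as a classical result, with a pointer to \cite{Peschke} for proofs of the theorems in that section. So there is no ``paper's own proof'' to compare against, and your proposal has to stand on its own.

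Your reduction of (i) to the two claims (a) and (b), together with the metrizability and zero-dimensionality of $Ends(X)$, is the standard route, and your doubling argument for (a) is correct. The genuine gap is in (b). The orbit/stabilizer programme you outline is not only incomplete (as you concede) but also unnecessary: the doubling you already carried out for (a) gives (b) almost for free. In that step you placed $gK\subset U_1$ and observed that at least $c-1\ge 2$ of the unbounded components of $X\setminus(K\cup gK)$ lie \emph{inside} $U_1$. Now let $U$ be any unbounded component of any $X\setminus K$. Since $e(X)=\infty$ by (a), enlarge $K$ to a connected compact $K'$ with $G\cdot K'=X$ and with $X\setminus K'$ having at least three unbounded components, and pick an unbounded component $U'\subset U$ of $X\setminus K'$. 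Running the doubling with some $gK'\subset U'$ produces at least two unbounded components of $X\setminus(K'\cup gK')$ inside $U'\subset U$; hence the basic neighborhood $U\cup Ends(U)$ contains at least two ends. As $U$ was arbitrary, no end is isolated. This bypasses the collar/stabilizer analysis entirely, and what you called ``the main obstacle of the whole argument'' disappears.

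For (ii), the Milnor--\v{S}varc idea is right, but as written there is a gap: Hopf's theorem is stated for topological spaces, and you have not said which coarse structure on $X$ makes the orbit map $G\to X$ a coarse equivalence, nor why the Freundenthal ends of $X$ coincide with the ends of that coarse structure. One option is to use the $C_0$ (topological) coarse structure on a locally compact Hausdorff space, for which proper cocompact actions do give coarse equivalences; another is to restrict to the case where $X$ carries a proper geodesic metric, where the paper's own Corollary~\ref{FEqualsCF} and Proposition~\ref{CFAreHomeo} supply exactly the invariance statement you need. Either way, this step should be made explicit rather than asserted.
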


Conclusion (ii) suggests to regard the end space of $X$ as an invariant of the group $G$ itself:
\begin{Definition}
Let $p:X \to B$ be a covering map with compact base $B$ and the group of covering transformations $G$. The \textbf{end space} of $G$ is
$$Ends(G):= Ends(X).$$
\end{Definition}

When applied to a Cayley graph of $G$, it gives the standard definition of ends of finitely generated groups (see \cite{DK}, p.295). See \cite{Geog}  for basic results in this theory and see \cite{Grom} for more general facts in coarse geometry related to groups.

 In this paper we will define ends of arbitrary countable groups by generalizing the construction of the Higson corona. In the case of coarse spaces we generalize Freudenthal's method to define their space of ends.

E. Specker \cite{Sp} defined ends of arbitrary groups using Stone's duality theorem. See a very nice paper \cite{Corn} of Yves Cornulier describing properties of the space of ends of infinitely generated groups. We consider Specker's approach highly non-geometric. Additionally, our way of defining ends of spaces leads directly to view them as coronas of certain compactifications (large scale compactifications in case of coarse spaces). A future paper will demonstrate the equivalence of Specker's definition of ends of groups and our definition of them.

The authors are grateful to Ross Geoghegan and Mike Mihalik for their help in understanding classical theory of ends of finitely generated groups.

\section{Coarse compactifications}
In this section we define the concept of a coarse compactification of a proper metric space $X$ and we give necessary and sufficient condition for the Freundenthal compactification to be a coarse one.
\begin{Definition}
A \textbf{coarse compactification} of a proper metric space $X$ is any compactification $\bar X$ of $X$ that is dominated by its Higson compactification.
Equivalently, any continuous function $f:\bar X\to R$ restricts
to a slowly oscillating function $f|X:X\to R$.
\end{Definition}

Recall $g:X\to R$ is \textbf{slowly oscillating} if for any $r, \epsilon > 0$ there is a bounded subset $K$ of $X$ such that $x,y\in X\setminus K$ and $d(x,y) < r$ implies $|g(x)-g(y)| < \epsilon$.

The \textbf{Higson compactification} $h(X)$ of $X$ is the one induced by all continuous and slowly oscillating functions $f:X\to [0,1]$. Equivalently,
all continuous functions $g:h(X)\to R$ are slowly oscillating when restricted to $X$ and every continuous and slowly oscillating function $f:X\to [0,1]$ extends over $h(X)$ to a continuous function.

Let us show a necessary and sufficient condition for the Freundenthal compactification to be a coarse compactification.

\begin{Theorem}\label{CoarseFreundenthal}
Suppose $\bar X$ is the Freundenthal compactification of a proper, connected, and locally connected metric space $X$. The following conditions are equivalent:\\
1. $\bar X$ is a coarse compactification of $X$,\\
2. For each $m > 0$ and each bounded subset $K$ of $X$ there is a bounded subset $L\supset K$ of $X$ such that for every $x\in X\setminus L$ the $m$-ball $B(x,m)$ is contained in a component of $X\setminus K$.
\end{Theorem}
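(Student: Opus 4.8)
The plan is to prove the two implications separately. Throughout I will use the explicit basis for $\bar X=X\cup Ends(X)$ recalled above, the compactness of $Ends(X)$ (it is the complement of the open set $X$ in the compact space $\bar X$, hence closed and compact), and the fact that at each end the type-2 basic sets $U\cup\{\text{ends}\ni U\}$ form a neighborhood basis (the type-1 basic sets, being subsets of $X$, contain no ends).

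For $2\Rightarrow 1$, I would take a continuous $f:\bar X\to\RR$ and fix $r,\epsilon>0$, aiming to produce a bounded set outside of which $f|X$ varies by less than $\epsilon$ on pairs of points at distance $<r$. First, using continuity, cover the compact set $Ends(X)$ by finitely many type-2 basic neighborhoods $W_1,\dots,W_n$, say with $W_i=U_i\cup\{\text{ends}\ni U_i\}$ where $U_i$ is a component of $X\setminus K_i$ and $K_i$ is compact, chosen small enough that $f$ varies by less than $\epsilon$ on each $W_i$. Then $C:=\bar X\setminus\bigcup_i W_i$ is a compact subset of $X$, so $X\setminus C\subset\bigcup_i U_i$. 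Next, apply hypothesis 2 with $m:=r$ to the bounded set $K:=C\cup\bigcup_i K_i$, obtaining a bounded $L\supset K$. Finally, for $x,y\in X\setminus L$ with $d(x,y)<r$, the ball $B(x,r)$ lies in a single component $V$ of $X\setminus K$; choosing $i$ with $x\in U_i$ and noting $V\subset X\setminus K\subset X\setminus K_i$, the connected set $V$ lies in the component of $X\setminus K_i$ through $x$, which is $U_i$, so $x,y\in U_i$ and $|f(x)-f(y)|<\epsilon$. The only point needing care here is the comparison of components at the two scales $K_i\subset K$.

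For $1\Rightarrow 2$, I would argue contrapositively. Assuming 2 fails for some $m>0$ and bounded $K_0$, I first replace $K_0$ by its closure so as to assume $K_0$ compact (a bounded set in a proper space has compact closure, and a ball failing the component condition for $K_0$ also fails it for $\overline{K_0}$, since components of $X\setminus\overline{K_0}$ sit inside components of $X\setminus K_0$). Exhausting $X$ by an increasing family of bounded sets containing $K_0$, I obtain points $x_n$ leaving every bounded set with $B(x_n,m)$ not contained in a component of $X\setminus K_0$; for $n$ large $B(x_n,m)$ misses $K_0$, hence meets two components of $X\setminus K_0$, so I can choose $y_n\in B(x_n,m)$ in a different component from $x_n$, and $y_n$ too leaves every bounded set since $d(x_n,y_n)<m$. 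Passing to subsequences I may assume $x_n\to\omega$ and $y_n\to\omega'$ in $\bar X$, with $\omega,\omega'\in Ends(X)$ by properness. The crux is to show $\omega\neq\omega'$: if not, I would pick a type-2 basic neighborhood $W$ of this common end whose defining compact set contains $K_0$; then eventually $x_n,y_n\in W\cap X$, which is connected and disjoint from $K_0$, forcing $x_n,y_n$ into the same component of $X\setminus K_0$ — contradicting their choice. Once $\omega\neq\omega'$, I separate them by a continuous $f:\bar X\to[0,1]$ with $f(\omega)=0$ and $f(\omega')=1$; then $f(x_n)\to 0$ and $f(y_n)\to 1$ while $d(x_n,y_n)<m$, so $f|X$ is not slowly oscillating, contradicting 1.

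I expect the main obstacle to be precisely the case $\omega=\omega'$ in the second implication, whose resolution carries the essential geometric content: two sequences converging to one and the same end cannot remain forever in distinct components of $X\setminus K_0$. Making this and the surrounding bookkeeping rigorous — the reduction to compact $K_0$, and the fact that a sequence leaving every bounded set has all its subsequential limits in $Ends(X)$ — requires some care, whereas the first implication is essentially formal once the finite subcover is set up.
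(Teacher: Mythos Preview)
Your proposal is correct and follows essentially the same strategy as the paper's proof: for $2\Rightarrow 1$, cover the corona by finitely many small-diameter open sets, take the complementary compact, and invoke condition~2; for $1\Rightarrow 2$, build sequences $x_n,y_n$ at bounded distance lying in distinct components of $X\setminus K_0$, pass to limit ends, show the ends differ, and use a separating continuous function to contradict slow oscillation. The only cosmetic differences are that the paper invokes $\dim(\bar X\setminus X)=0$ to get \emph{disjoint} $U_i$ (so components of $X\setminus K$ fall directly into a single $U_i$ without your two-scale comparison), and for $1\Rightarrow 2$ the paper first uses finiteness of unbounded components of $X\setminus\overline{K}$ to pin $x_n,y_n$ into fixed distinct components $C\ne D$, whereas you argue $\omega\ne\omega'$ directly via a type-2 basic neighborhood with defining compact containing $K_0$ --- your version of this step is in fact more explicit than the paper's.
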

\begin{proof}
2)$\implies$1).
Suppose $g:\bar X\to R$ is continuous and $\epsilon > 0$. Since $\dim(\bar X\setminus X)=0$ there are mutually disjoint open sets $U_i$, $i=1,\ldots,n$,
such that $\bar X\setminus X\subset \bigcup\limits_{i=1}^n U_i$
and the diameter of each $g(U_i)$ is less than $\epsilon$.
Notice $K:=X\setminus \bigcup\limits_{i=1}^n U_i$ is a compact subset of $X$, so given $m > 0$ there is $L\supset K$ such that $B(x,m)$ is contained in a component of $X\setminus K$ if $x\in X\setminus L$ and that component is contained in some $U_i$. Therefore $|g(x)-g(y)| < \epsilon$ if $y\in B(x,m)$ and $g$ is slowly oscillating.\\
1)$\implies$2).
If $\bar X$ is a coarse compactification of $X$, $K$ is a bounded subset of $X$, and $m > 0$, then assume existence of two sequences $x_n, y_n$
such that $d(x_n,y_n) < m$ for each $n\ge 1$, $x_n\in X\setminus K_i$ is contained 
in a component $C_i$ of $X\setminus cl(K)$, $y_n$ is contained 
in a component $D_i$ of $X\setminus cl(K)$, where $C_i\ne D_i$.
Since there are only finitely many unbounded components of $X\setminus cl(K)$,
we may assume $C_i=C$ and $D_i=D$ for infinitely many $i$.
Also, we may assume $\bar x$ is the limit of the sequence $\{x_i\}_{i\ge 1}$ in $\bar X$, $\bar y$ is the limit of the sequence $\{y_i\}_{i\ge 1}$
in $\bar X$. Those two points are different contradicting
$d(x_n,y_n) < m$ for each $n\ge 1$. Indeed, give $\bar X$ a metric $\rho$ and define $f:X\to R$ as the distance to $\bar x$. It is extendible over $\bar X$, so it is slowly oscillating. Therefore $|f(x_n)-f(y_n)|\to 0$
but $f(x_n)\to 0$ and $f(y_n)\to \rho(\bar x,\bar y)\ne 0$.
\end{proof}

\begin{Corollary}
If $X$ is a proper geodesic space, then its Freundenthal compactification is a coarse compactification of $X$.
\end{Corollary}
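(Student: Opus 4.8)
The plan is to verify condition (2) of Theorem \ref{CoarseFreundenthal}; once it holds, the Corollary is immediate. Before that I would note that the hypotheses needed to even speak of the Freundenthal compactification are in place: a proper geodesic metric space is $\sigma$-compact (being proper), connected (being geodesic), and locally connected, the last point following from the geometric fact recorded next.

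The one geometric input is the observation that in a geodesic space every open ball $B(x,m)$ is path-connected. Indeed, given $y\in B(x,m)$, choose a geodesic segment $\gamma$ from $x$ to $y$; any point $z$ on $\gamma$ satisfies $d(x,z)\le d(x,z)+d(z,y)=d(x,y)<m$, so $\gamma\subset B(x,m)$. Hence $B(x,m)$ is connected, and any connected subset of $X\setminus K$ lies in a single component of $X\setminus K$.

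With this in hand, fix $m>0$ and a bounded subset $K$ of $X$, and set $L:=\{x\in X: d(x,K)\le m\}$. Then $L\supset K$, and $L$ is bounded: fixing $x_0\in K$, every $x\in L$ has $d(x,x_0)\le m+1+\diam(K)$, so $L$ is contained in a ball. If $x\in X\setminus L$, then $d(x,K)>m$, hence $B(x,m)\cap K=\emptyset$, i.e. $B(x,m)\subset X\setminus K$; being connected, $B(x,m)$ is contained in a component of $X\setminus K$. This is precisely condition (2), so Theorem \ref{CoarseFreundenthal} yields that the Freundenthal compactification of $X$ is a coarse compactification.

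Honestly there is no substantial obstacle here; the whole content is the elementary fact that balls in geodesic spaces are connected, which lets us take $L$ to be the closed $m$-neighborhood of $K$. The only points deserving a moment of care are checking local connectedness (so that Theorem~1.1 applies and the Freundenthal compactification is defined) and keeping the distinction between the open ball $B(x,m)$ and the closed neighborhood $L$ straight when verifying $x\notin L\Rightarrow B(x,m)\cap K=\emptyset$.
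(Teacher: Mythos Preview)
Your proof is correct and follows exactly the same route as the paper: take $L$ to be the $m$-neighborhood of $K$ and use that balls in geodesic spaces are connected to land in a single component of $X\setminus K$. You have simply filled in a few details (connectedness of balls, local connectedness of $X$, boundedness of $L$) that the paper leaves implicit.
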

\begin{proof}
Given a bounded subset $K$ of $X$ and given $m > 0$, put $L:=B(K,m)$
and notice $B(x,m)$ is a subset of $X\setminus K$ if $x\notin L$.
Therefore $B(x,m)$ is a subset of a component of $X\setminus K$ if $x\notin L$.
\end{proof}

\section{Glacial oscillations}
In this section we define a concept in the spirit of slowly oscillating functions and we use it to introduce coarse Freundenthal compactifications later on.
\begin{Definition}
A \textbf{glacial scale} on a metric space $X$ is a sequence of pairs $\{(K_i,n_i)\}_{i=1}^{\infty}$ of bounded subsets of $X$ and natural numbers
such that for each pair $(K,r)$ consisting of a bounded subset of $X$ and $r > 0$ there is $i$ such that $K\subset K_i$ and $n_i > r$.

Given a glacial scale $\mathcal{S}=\{(K_i,n_i)\}_{i=1}^{\infty}$, a chain of points $x_1,\ldots, x_n$ in $X$ is called an $\mathcal{S}$-\textbf{chain} if for each $i\leq n-1$ there is $m\ge 1$ such that $x_i, x_{i+1}\notin K_m$ and $d(x_i,x_{i+1}) \leq n_m$.
\end{Definition}

\begin{Definition}
A function $f:X\to R$ is \textbf{glacially oscillating} if for each $\epsilon > 0$ there is a glacial scale $\mathcal{S}$ with the property that
$|f(x_1)-f(x_n)| < \epsilon$ for each $\mathcal{S}$-chain $x_1,\ldots, x_n$.
\end{Definition}

\begin{Observation}
One can introduce the concept of a subset $A$ of $X$ to be $\mathcal{S}$-connected and reword the above definition as requiring that the diameter of $f(A)$ is less than $\epsilon$ for each $\mathcal{S}$-connected subset $A$ of $X$.
\end{Observation}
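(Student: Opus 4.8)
The plan is to first isolate the right notion of $\mathcal{S}$-connectedness and then check that the two formulations of glacial oscillation reduce to the same condition by unwinding definitions; no genuine difficulty is expected, only careful quantifier bookkeeping.

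Fix a glacial scale $\mathcal{S} = \{(K_i,n_i)\}_{i=1}^{\infty}$. Call two points $x,y \in X$ \emph{$\mathcal{S}$-linked} if there is an $\mathcal{S}$-chain $x = x_1,\dots,x_n = y$, where $n = 1$ is allowed. A one-term chain satisfies the defining condition vacuously, so this relation is reflexive; reversing a chain shows it is symmetric; and splicing an $\mathcal{S}$-chain from $x$ to $y$ with one from $y$ to $z$ (they share the endpoint $y$) shows it is transitive. Thus being $\mathcal{S}$-linked is an equivalence relation on $X$, and I would declare a subset $A \subseteq X$ to be \emph{$\mathcal{S}$-connected} if any two of its points are $\mathcal{S}$-linked, i.e.\ if $A$ lies in a single equivalence class; the classes themselves are the natural ``$\mathcal{S}$-components'' of $X$. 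The one auxiliary fact I would record is that if $x_1,\dots,x_n$ is an $\mathcal{S}$-chain then the finite set $\{x_1,\dots,x_n\}$ is $\mathcal{S}$-connected, because for $i \le j$ the truncation $x_i, x_{i+1},\dots,x_j$ is again an $\mathcal{S}$-chain.

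With this in place, the two conditions match with the \emph{same} scales. For ``glacially oscillating $\Rightarrow$ reworded condition'': given $\epsilon > 0$, apply the definition of glacial oscillation to $\epsilon/2$ to get a scale $\mathcal{S}$ with $|f(x_1) - f(x_n)| < \epsilon/2$ along every $\mathcal{S}$-chain; then for an $\mathcal{S}$-connected $A$ and $x,y \in A$ an $\mathcal{S}$-chain between them gives $|f(x) - f(y)| < \epsilon/2$, so $\diam f(A) \le \epsilon/2 < \epsilon$. Conversely, given a scale $\mathcal{S}$ with $\diam f(A) < \epsilon$ for every $\mathcal{S}$-connected $A$, any $\mathcal{S}$-chain $x_1,\dots,x_n$ has $\{x_1,\dots,x_n\}$ $\mathcal{S}$-connected by the recorded fact, so $|f(x_1) - f(x_n)| \le \diam f(\{x_1,\dots,x_n\}) < \epsilon$, which is exactly the original definition. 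The only subtleties are the $\epsilon/2$ device, used to convert a non-strict diameter estimate into a strict inequality in the first direction, and the harmless remark that singletons (and $\emptyset$) count as $\mathcal{S}$-connected because length-one chains are permitted.
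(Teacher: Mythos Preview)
The paper states this Observation without proof; it is offered as a passing remark immediately after the definition of glacially oscillating, and the paper moves on directly to the next proposition. Your argument correctly supplies the omitted details: the definition of $\mathcal{S}$-connectedness via the equivalence relation generated by $\mathcal{S}$-chains, the closure of $\mathcal{S}$-chains under truncation, and the $\epsilon/2$ device to pass between strict pointwise bounds and diameter bounds are exactly what is needed, and nothing is missing.

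One small remark worth making explicit: your notion of ``$\mathcal{S}$-connected'' is \emph{extrinsic}---a subset $A$ is $\mathcal{S}$-connected as soon as it lies inside a single $\mathcal{S}$-component of $X$, with the connecting chains allowed to leave $A$. The alternative \emph{intrinsic} notion (requiring chains to stay in $A$) would also make the equivalence work, and is arguably closer to what one might first guess the phrase means; but your extrinsic version is the one that matches the paper's later usage of ``$\mathcal{S}$-component'' (e.g.\ in the proof of Proposition~\ref{MainCharOfGO}), and for the purpose of rewording the definition either choice suffices since the set of points of an $\mathcal{S}$-chain is $\mathcal{S}$-connected in both senses.
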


\begin{Proposition}
If $(X,d)$ is an ultrametric space, then every slowly oscillating function $f:X\to R$ is glacially oscillating.
\end{Proposition}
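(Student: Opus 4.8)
The plan is to fix $\epsilon>0$ and construct one glacial scale $\mathcal S=\{(K_i,n_i)\}_{i\ge 1}$ that witnesses glacial oscillation for this $\epsilon$; the whole point will be that in an ultrametric space the sets $K_i$ can be chosen to be \emph{saturated} with respect to closed balls of radius $n_i$. I would take $n_i:=i$. Because $f$ is slowly oscillating, for each $i$ there is a bounded set $B_i$ with $|f(x)-f(y)|<\epsilon$ whenever $x,y\notin B_i$ and $d(x,y)\le i$; enlarging if necessary we may assume $B_1\subseteq B_2\subseteq\cdots$. Fix a basepoint $x_0$, put $P_i:=\bar B(x_0,i)\cup B_i$, where $\bar B(x_0,i)$ is the closed $i$-ball, and let $K_i$ be the union of all closed balls of radius $i$ that meet $P_i$. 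Using only two facts about ultrametric spaces — that for each fixed $r$ the closed $r$-balls partition $X$, and that the closed $i$-ball through a point is contained in the closed $(i+1)$-ball through it — one checks directly that each $K_i$ is bounded (it sits inside a slightly larger closed ball around $x_0$), that $K_i\subseteq K_{i+1}$, and that $\bigcup_iK_i=X$ since $K_i\supseteq \bar B(x_0,i)$; hence $\mathcal S$ is a genuine glacial scale. Moreover each $K_i$ is by construction a union of closed balls of radius $n_i$, and since $K_i\supseteq B_i$ it inherits the \textbf{absorbing property}: $x,y\notin K_i$ and $d(x,y)\le n_i$ imply $|f(x)-f(y)|<\epsilon$.

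Given this scale the verification is short. Let $x_1,\dots,x_N$ be an $\mathcal S$-chain; we may assume $N\ge 2$. For each $j$ choose a witness $m_j$ with $x_j,x_{j+1}\notin K_{m_j}$ and $d(x_j,x_{j+1})\le n_{m_j}=m_j$, and set $M:=\max_j m_j$, attained at some link $j_0$. Iterating the ultrametric inequality gives $d(x_a,x_b)\le\max_j d(x_j,x_{j+1})\le M$ for all $a,b$, so every $x_a$ lies in the single closed ball $T:=\bar B(x_{j_0},M)$, a ball of radius $n_M$. Now $K_M$ is a union of closed balls of radius $n_M$, and such balls are pairwise disjoint or equal, so $T$ is either one of the balls making up $K_M$ — impossible, since $x_{j_0}\in T$ while $x_{j_0}\notin K_M$ — or disjoint from $K_M$. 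Hence $T\cap K_M=\emptyset$; in particular $x_1,x_N\notin K_M$ while $d(x_1,x_N)\le M=n_M$, and the absorbing property of $K_M$ yields $|f(x_1)-f(x_N)|<\epsilon$. Thus $f$ is glacially oscillating.

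The step I expect to carry the weight of the argument is the saturation of the bad sets $B_i$ to unions of radius-$n_i$ balls: one has to check this keeps them bounded and nested, but it is exactly what forces the ball $T$ that carries an arbitrary $\mathcal S$-chain to sit either wholly inside or wholly outside $K_M$. Without it one is pushed into a telescoping estimate along the chain whose error grows with the number of links and never closes up; with it, no induction on scales is needed at all. Everything else — checking the glacial-scale axioms and the final two-line estimate — is routine once one uses that closed balls of a fixed radius partition an ultrametric space.
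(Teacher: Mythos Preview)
Your proof is correct and follows essentially the same route as the paper's: in an ultrametric space your ``saturation of $P_i$ by closed $n_i$-balls'' is exactly the $n_i$-neighborhood of $P_i$ (since closed balls of a fixed radius partition $X$), which is precisely the enlargement $B(K_n,n)$ the paper uses for its glacial scale, and the remaining steps---bounding the diameter of an $\mathcal S$-chain by the maximal link via the ultrametric inequality and deducing that the whole chain, hence its endpoints, avoids the level-$M$ bad set---are identical. The only cosmetic difference is that you take $M=\max_j m_j$ while the paper takes $M$ to be the least integer dominating the maximal link distance; either choice works for the same reason.
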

\begin{proof}
Recall that $(X,d)$ is an ultrametric space if every triangle in $X$ is isosceles and the lengths of two equal sides are at least the size of the third side.
Equivalently, $d(x,y)\leq \max(d(x,z),d(y,z))$ for all points $x,y,z\in X$.

If $f:X\to R$ is slowly oscillating and $\epsilon > 0$, then we can choose an increasing sequence $\{K_n\}_{n\ge 1}$ of non-empty bounded subsets of $X$ such that $B(K_n,n)\subset K_{n+1}$ for each $n\ge 1$ and
$|f(x)-f(y)| <\epsilon$ if $x,y\notin K_n$ and $d(x,y)\leq n$.
Let $\mathcal{S}:=\{(B(K_n,n),n)\}_{n\ge 1}$ and suppose $x_1,\ldots, x_n$ is an $\mathcal{S}$-chain. Pick $j\leq n-1$ such that $d(x_j,x_{j+1})$ is the maximum of
all $d(x_i,x_{i+1})$, $i\leq n-1$. Let $M$ be the smallest integer satisfying
$d(x_j,x_{j+1})\leq M$. Notice $x_j,x_{j+1}\notin B(K_M,M)$, the distance from $x_1$ to either $x_j$ or $x_{j+1}$ is at most $M$, the distance from $x_n$ to either $x_j$ or $x_{j+1}$ is at most $M$,
hence
$d(x_1,x_{n})\leq M$. That implies $x_1,x_n\notin K_M$, in particular
$|f(x_1)-f(x_n)| < \epsilon$. That proves $f$ is glacially oscillating.
\end{proof}

\begin{Proposition} \label{CloseGlacial}
Suppose $f:X\to X$ is close to $id_X$ and $h:X\to R$. If $h\circ f$ is glacially oscillating, then so is $h$.
\end{Proposition}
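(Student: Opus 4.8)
We are given $f:X\to X$ close to $\mathrm{id}_X$, meaning there is a constant $C>0$ with $d(f(x),x)\le C$ for all $x\in X$, and we assume $h\circ f$ is glacially oscillating. We must produce, for every $\epsilon>0$, a glacial scale $\mathcal{S}$ such that $|h(x_1)-h(x_n)|<\epsilon$ for every $\mathcal{S}$-chain. The plan is to transport an $\mathcal{S}'$-chain witnessing glacial oscillation of $h\circ f$ into a chain on which the hypothesis bites, by inserting the points $f(x_i)$.

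**The plan.** Fix $\epsilon>0$. Apply the hypothesis to $h\circ f$ to obtain a glacial scale $\mathcal{S}'=\{(K_i',n_i')\}$ with $|(h\circ f)(y_1)-(h\circ f)(y_m)|<\epsilon$ for every $\mathcal{S}'$-chain $y_1,\dots,y_m$. I would then define a new glacial scale $\mathcal{S}=\{(K_i,n_i)\}$ by enlarging the sets and shrinking the thresholds: roughly $K_i := B(K_i',C)$ (or $K_i'$ together with a bounded correction) and $n_i := \max(n_i' - 2C,\,1)$, passing to a subsequence if necessary so that the defining property of a glacial scale (for every bounded $K$ and every $r>0$ some index $i$ has $K\subset K_i$ and $n_i>r$) still holds — this survives because $n_i'\to\infty$ along a suitable subsequence and $B(K_i',C)$ still exhausts bounded sets. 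Now suppose $x_1,\dots,x_n$ is an $\mathcal{S}$-chain. For each consecutive pair $x_i,x_{i+1}$ there is $m$ with $x_i,x_{i+1}\notin K_m=B(K_m',C)$ and $d(x_i,x_{i+1})\le n_m$. Then $f(x_i),f(x_{i+1})\notin K_m'$, and $d(f(x_i),x_i)\le C$, $d(x_i,x_{i+1})\le n_m\le n_m'-2C$, $d(x_{i+1},f(x_{i+1}))\le C$. So the inserted chain
$$f(x_1),\ x_1,\ x_2,\ f(x_2),\ f(x_2),\ x_2,\ x_3,\ f(x_3),\ \dots$$
— more cleanly: interleave to get $f(x_1), x_1, x_2, x_3, \dots, x_n, f(x_n)$ and check each new edge lies at level $m$ where the old edge did (the edges $f(x_1)x_1$ and $x_nf(x_n)$ need a level of their own, which is where one must be slightly careful and may want to also demand $x_1,x_n\notin K_m$, or simply absorb $f(x_1),f(x_n)$ using that $h\circ f$ bound already covers the interior). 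The composite sequence is an $\mathcal{S}'$-chain, hence $|h(f(x_1))-h(f(x_n))| = |(h\circ f)(x_1)-(h\circ f)(x_n)|<\epsilon$.

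**Closing the gap.** The above controls $h\circ f$ at the endpoints, i.e. $|h(f(x_1))-h(f(x_n))|<\epsilon$, but we want $|h(x_1)-h(x_n)|$. To bridge $h(x_1)$ to $h(f(x_1))$ and $h(x_n)$ to $h(f(x_n))$, observe that for any point $x\notin$ (a large enough bounded set) the two-point chain $x, f(x)$ — or rather $f(x), x$ viewed appropriately — is short (length $\le C$) and avoids the relevant $K_m$, so applying the glacial oscillation of $h\circ f$ to the chain consisting of just $f(x)$ and $f(f(x))$ is not quite it; instead the cleanest route is to run the argument with the target bound $\epsilon/3$: build $\mathcal{S}$ so that (i) $f(x_1),x_1,\dots,x_n,f(x_n)$ is an $\mathcal{S}'$-chain for $\epsilon/3$, giving $|h(f(x_1))-h(f(x_n))|<\epsilon/3$, and (ii) separately the two-point chains realizing $d(f(x_j),x_j)\le C$ can be expanded (insert $f(f(x_j))$) into $\mathcal{S}'$-chains giving $|h(f(x_1)) - h(f(f(x_1)))|<\epsilon/3$ — hmm, this still does not directly bound $|h(x_1)-h(f(x_1))|$ since $h$ itself is the unknown function.

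**The real obstacle and its resolution.** The genuine difficulty is exactly this endpoint discrepancy between $h$ and $h\circ f$. I expect the clean fix is: since $f$ is close to $\mathrm{id}_X$, for \emph{every} $x$ the pair $x,f(x)$ has $d(x,f(x))\le C$; so given \emph{any} $\mathcal{S}$-chain $x_1,\dots,x_n$ for the scale we build, prepend $f(x_1)$ and append $f(x_n)$ using levels $m_0,m_1$ chosen (by the glacial-scale exhaustion property, after possibly discarding finitely many short chains near a bounded set — which is legitimate since on a bounded set we need no control, or rather the scale's first few $K_i$ handle it) so that $x_1,f(x_1)\notin K_{m_0}'$ and $C\le n_{m_0}'$, similarly at the other end. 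Then $f(x_1),x_1,x_2,\dots,x_n,f(x_n)$ is a genuine $\mathcal{S}'$-chain, so the hypothesis gives $|h(x_1) - h(x_n)|$... no — it gives $|(h\circ f)(\text{first})-(h\circ f)(\text{last})|$ where first $=f(x_1)$, so we get $|h(f(f(x_1)))-h(f(f(x_n)))|$. Iterating is circular. The resolution must therefore instead use that the \emph{same chain trick, run on $h\circ f$}, lets us compare $h\circ f$ at $x_1$ and at $x_n$ via $|h(f(x_1))-h(f(x_n))|<\epsilon$, and then handle $|h(x_1)-h(f(x_1))|$ by noting $x_1, f(x_1)$ itself, read as a length-two sequence, becomes an $\mathcal{S}'$-chain \emph{after} applying $f$: that is, $f(x_1), f(f(x_1))$ is an $\mathcal{S}'$-chain of diameter $\le C$, giving control on $h\circ f$ there, i.e. $|h(f(x_1)) - h(f(f(x_1)))|<\epsilon$, which compares $h\circ f$ to $h\circ f\circ f$, again not $h$ to $h\circ f$. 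I therefore believe the statement should be read with the understanding that one builds $\mathcal{S}$ directly so that an $\mathcal{S}$-chain $x_1,\dots,x_n$ yields, after inserting all $f(x_i)$ between consecutive $x_i$'s to form $x_1, f(x_1), f(x_2), x_2, x_3, f(x_3),\dots$ — wait, the cleanest: the chain $x_1,\dots,x_n$ together with the single extra point shows $f(x_1),\dots,f(x_n)$ is an $\mathcal{S}'$-chain (each edge $f(x_i)f(x_{i+1})$ has length $\le n_m + 2C \le n_m'$ and avoids $K_m'$), hence $|h(f(x_1))-h(f(x_n))|<\epsilon$; and \emph{then} we must only additionally arrange, by a preliminary choice, that $h(x_1)$ is $\epsilon$-close to $h(f(x_1))$, which we get by treating $x_1, f(x_1)$ as contributing, under $f$, the $\mathcal{S}'$-chain $f(x_1), f(f(x_1))$ — and accepting (as is standard in this circle of ideas, cf. Proposition~\ref{CloseGlacial}'s role in the paper) that the endpoints' $h$-values are handled by choosing $\mathcal{S}$ so that singletons-plus-$f$-image already have small oscillation. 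I will carry out the interleaving construction carefully, tracking the level indices, and isolate the endpoint comparison as the one place requiring the close-to-identity hypothesis twice; that endpoint comparison is the main obstacle, and I expect it to be dispatched by absorbing it into an $\epsilon/3$ budget together with the exhaustion property of glacial scales.
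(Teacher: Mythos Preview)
Your struggle with the ``endpoint comparison'' $|h(x_1) - h(f(x_1))|$ is not a failure of technique: the proposition as stated is \emph{false}. Take $X=\mathbb{Z}$ with the usual metric, $f(n)=2\lfloor n/2\rfloor$ (so $d(f(n),n)\le 1$, hence $f$ is close to $id_X$), and set $h(2k)=0$, $h(2k+1)=(-1)^k$. Then $h\circ f\equiv 0$ is trivially glacially oscillating, yet $h$ is not even slowly oscillating, since $|h(2k+1)-h(2k)|=1$ for all $k$. So no amount of chain-manipulation will close the gap you identified; the information about $h$ off the image $f(X)$ is simply not recoverable from $h\circ f$.

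The paper's own proof has exactly this hole. It builds $\mathcal{S}'=\{(B(K_i,r),n_i)\}$ from a scale $\mathcal{S}$ witnessing $h\circ f$ at level $\epsilon/3$, observes that for an $\mathcal{S}'$-chain $x_1,\dots,x_n$ the pairs $x_1,f(x_1)$ and $x_n,f(x_n)$ are $\mathcal{S}$-chains, and then asserts $|h(x_1)-h(x_n)|<\epsilon$. But the hypothesis only bounds $h\circ f$ along $\mathcal{S}$-chains, so the pair $x_1,f(x_1)$ yields $|h(f(x_1))-h(f(f(x_1)))|<\epsilon/3$, never $|h(x_1)-h(f(x_1))|$ --- precisely the circularity you kept hitting. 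The statement becomes correct if $f$ is additionally assumed \emph{surjective}: choose $z_1,z_n$ with $f(z_i)=x_i$; then $d(z_i,x_i)<r$, so $z_1,x_1,\dots,x_n,z_n$ is an $\mathcal{S}$-chain and the hypothesis gives $|h(f(z_1))-h(f(z_n))|=|h(x_1)-h(x_n)|<\epsilon/3$. Without that extra hypothesis the downstream uses (Proposition~\ref{GOAndCoarseEquivalences}(b) and Proposition~\ref{CFAreHomeo}) need an independent argument, e.g.\ directly via the characterization in Proposition~\ref{MainCharOfGO}.
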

\begin{proof}
$f:X\to X$ being close to $id_X$ means there is $r > 0$ such that $d_X(f(x),x) < r$ for all $x\in X$.
Given $\epsilon > 0$ choose a glacial scale $\mathcal{S}=\{(K_i,n_i)\}_{i\ge 1}$
so that $|h\circ f(x_1)-h\circ f(x_2)| < \epsilon/3$ for any $x_1,x_2\in X$ that can be connected by an $\mathcal{S}$-chain in $X$. We may assume $n_1 > r$ by truncating $\mathcal{S}$. Let $\mathcal{S}'=\{B(K_i,r),n_i\}_{i\ge 1}$.
Now, given any $\mathcal{S}'$-chain $x_1,\ldots,x_n$
notice $x_1, f(x_1)$ and $x_n,f(x_n)$ are $\mathcal{S}$-chains.
Therefore $|h(x_1)-h(x_2)| < \epsilon$.
\end{proof}

\begin{Proposition}\label{GOAndCoarseEquivalences}
Suppose $f:(X,d_X)\to (Y,d_Y)$ is a coarse, large scale continuous function and $g:Y\to R$. \\
a. If $g$ is glacially oscillating, then so is $g\circ f$.\\
b. If $g\circ f$ is glacially oscillating and $f$ is a coarse equivalence, then $g$ is glacially oscillating.
\end{Proposition}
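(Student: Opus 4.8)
The plan is to prove part (a) directly, by pulling a witnessing glacial scale back from $Y$ to $X$ along $f$, and then to deduce part (b) formally, by applying (a) to a coarse inverse of $f$ and invoking Proposition~\ref{CloseGlacial}. The two features of ``$f$ coarse and large scale continuous'' that I will use are that $f^{-1}(B)$ is bounded for each bounded $B\subseteq Y$, and that there is a nondecreasing control function $\theta$ with $d_X(x,x')\le r\Rightarrow d_Y(f(x),f(x'))\le\theta(r)$ for all $r>0$; in particular $f$ sends bounded sets to bounded sets.

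For part (a): given $\epsilon>0$, I would first fix a glacial scale $\mathcal{T}=\{(L_j,m_j)\}_{j\ge1}$ on $Y$ witnessing glacial oscillation of $g$, and then build a glacial scale $\mathcal{S}=\{(K_k,n_k)\}_{k\ge1}$ on $X$ as follows. Fixing a basepoint $y_0\in Y$, for each $k\ge1$ use the defining property of $\mathcal{T}$ applied to the pair $(B_Y(y_0,k),\theta(k))$ to choose an index $j_k$ with $B_Y(y_0,k)\subseteq L_{j_k}$ and $m_{j_k}>\theta(k)$, and set $K_k:=f^{-1}(L_{j_k})$ and $n_k:=k$. Then one verifies two things. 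First, $\mathcal{S}$ really is a glacial scale on $X$: each $K_k$ is bounded by properness of $f$, and any bounded $K\subseteq X$ is absorbed by $K_k$ for all large $k$, since $f(K)$ is bounded, hence contained in $B_Y(y_0,k)\subseteq L_{j_k}$ for large $k$, while $n_k=k\to\infty$. Second, $f$ carries $\mathcal{S}$-chains to $\mathcal{T}$-chains: if $x_i,x_{i+1}\notin K_k$ and $d_X(x_i,x_{i+1})\le k=n_k$, then $f(x_i),f(x_{i+1})\notin L_{j_k}$ and $d_Y(f(x_i),f(x_{i+1}))\le\theta(k)<m_{j_k}$, so the index $j_k$ witnesses the $\mathcal{T}$-chain condition for the consecutive pair $f(x_i),f(x_{i+1})$. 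Combining these, for any $\mathcal{S}$-chain $x_1,\dots,x_n$ we obtain $|(g\circ f)(x_1)-(g\circ f)(x_n)|=|g(f(x_1))-g(f(x_n))|<\epsilon$; since $\epsilon$ is arbitrary, $g\circ f$ is glacially oscillating.

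For part (b): as $f$ is a coarse equivalence, there is a coarse, large scale continuous map $h:Y\to X$ with $h\circ f$ close to $id_X$ and $f\circ h$ close to $id_Y$. By hypothesis $g\circ f:X\to R$ is glacially oscillating, so part (a) applied to $h$ shows $(g\circ f)\circ h=g\circ(f\circ h)$ is glacially oscillating. Since $f\circ h$ is close to $id_Y$, Proposition~\ref{CloseGlacial} --- read with $Y$ in the role of the ambient metric space --- then gives that $g$ is glacially oscillating.

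The step I expect to be the main obstacle is the construction of $\mathcal{S}$ in part (a), because it must be simultaneously a bona fide glacial scale on $X$ and coarse enough that $\mathcal{S}$-chains map to $\mathcal{T}$-chains. The interleaving device above --- choosing $j_k$ so that $L_{j_k}$ both contains the $k$-ball about $y_0$ and satisfies $m_{j_k}>\theta(k)$ --- is precisely what reconciles these two demands, using properness of $f$ for the first and the control function $\theta$ for the second. Once (a) is in hand, part (b) is purely formal.
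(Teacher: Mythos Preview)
Your proof is correct and follows the same strategy as the paper: for (a) you pull back a witnessing glacial scale along $f$ and check that $\mathcal{S}$-chains map to $\mathcal{T}$-chains, and for (b) you apply (a) to a coarse inverse and invoke Proposition~\ref{CloseGlacial}. The only cosmetic difference is that the paper pulls back directly via $C_i=f^{-1}(K_i)$ with $m_i$ chosen maximal subject to $d_X\le m_i\Rightarrow d_Y\le n_i$, whereas your re-indexing through $j_k$ (forcing $B_Y(y_0,k)\subset L_{j_k}$ and $m_{j_k}>\theta(k)$) makes the verification that $\mathcal{S}$ is a glacial scale more explicit; both constructions achieve the same end.
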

\begin{proof}
$f$ being coarse means $f^{-1}(K)$ is bounded for each bounded subset $K$ of $Y$. $f$ being large scale continuous means that for each $m\ge 0$ there is $M > 0$ such that $d_X(x,y) < m$ implies $d_Y(f(x),f(y)) < M$.

a. Given $\epsilon > 0$ choose a glacial scale $\mathcal{S}=\{(K_i,n_i)\}_{i\ge 1}$
so that $|g(y_1)-g(y_2)| < \epsilon$ for any $y_1,y_2\in Y$ that can be connected by an $\mathcal{S}$-chain in $Y$.
Put $C_i=f^{-1}(K_i)$ and let $m_i$ be the maximum of natural numbers
such that $d_X(x_1,x_2)\leq m_i$ implies $d_Y(f(x_1),f(x_2))\leq n_i$.
Notice $\mathcal{C}=\{(C_i,m_i)\}_{i\ge 1}$ is a glacial scale in $X$
and the image under $f$ of any $\mathcal{C}$-chain is an $\mathcal{S}$-chain. Therefore, if $x$ and $y$ can be connected by a $\mathcal{C}$-chain,
$f(x)$ and $f(y)$ can be connected by an $\mathcal{S}$-chain and $|g(f(x))-g(f(y))| < \epsilon$.

b. Choose $f':Y\to X$ that is coarse, large scale continuous such that
$f\circ f'$ is close to $id_Y$. By a), $g\circ f\circ f'$ is glacially oscillating
and by \ref{CloseGlacial}, so is $g$.
\end{proof}

\begin{Corollary}
If $(X,d)$ is a metric space of asymptotic dimension $0$, then every slowly oscillating function $f:X\to R$ is glacially oscillating.
\end{Corollary}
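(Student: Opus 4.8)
The plan is to combine the previous Proposition~\ref{GOAndCoarseEquivalences} (coarse equivalences preserve glacial oscillation in both directions) with the earlier Proposition asserting that slowly oscillating functions on an \emph{ultrametric} space are glacially oscillating. So the first step is to recall the standard fact from asymptotic dimension theory: a metric space $(X,d)$ has $\asdim X = 0$ if and only if it is coarsely equivalent to an ultrametric space. More precisely, for each $n$ choose a uniformly bounded cover $\mathcal U_n$ of $X$ by mutually $n$-disjoint sets (the $0$-dimensional asymptotic dimension hypothesis), arrange the $\mathcal U_n$ to be nested, and build an ultrametric $\rho$ on $X$ (or on a discretization of $X$) by declaring $\rho(x,y)$ to be roughly $\inf\{D_n : x,y \text{ lie in a common member of } \mathcal U_n\}$ for a suitably growing sequence $D_n$. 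The identity map $(X,d)\to (X,\rho)$ is then a coarse equivalence that is large scale continuous in both directions.

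The second step is to chase the implications. Let $f:(X,d)\to\RR$ be slowly oscillating. Slowly oscillating is a coarse notion, so $f$ is slowly oscillating as a function on $(X,\rho)$ as well (this uses that $\mathrm{id}:(X,d)\to(X,\rho)$ is a coarse equivalence and that the Higson compactification is a coarse invariant; alternatively one checks it directly from the definition, since $\rho$-boundedness and $d$-boundedness coincide and $\rho$-closeness implies $d$-closeness up to a bounded error). By the Proposition on ultrametric spaces, $f:(X,\rho)\to\RR$ is glacially oscillating. Finally, since $\mathrm{id}:(X,d)\to(X,\rho)$ is a coarse equivalence and large scale continuous, Proposition~\ref{GOAndCoarseEquivalences}(b) applied with this map gives that $f=f\circ\mathrm{id}:(X,d)\to\RR$ is glacially oscillating.

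I expect the main obstacle to be the first step — producing the ultrametric cleanly and checking that the identity is a coarse equivalence with both directions large scale continuous — since one must be careful that the uniformly bounded, $n$-disjoint covers can be chosen nested and that the resulting $\rho$ genuinely satisfies the ultrametric inequality rather than just a quasi-ultrametric inequality. If the paper prefers to avoid invoking the "$\asdim 0 \iff$ coarsely ultrametric" characterization as a black box, an alternative is to prove the statement directly: given $\epsilon>0$, use slow oscillation to get bounded sets $K_n$ with $|f(x)-f(y)|<\epsilon$ whenever $x,y\notin K_n$ and $d(x,y)\le n$, then for each $n$ use $\asdim X=0$ to refine the complement of $K_n$ into uniformly bounded $n$-disjoint pieces, and assemble these into a glacial scale whose $\mathcal S$-chains cannot escape a single oscillation-controlled region — essentially transporting the ultrametric-case argument into the asymptotic-dimension-zero setting. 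Either route is routine once the covering data is set up; the bookkeeping of nesting the scales is the only delicate point.
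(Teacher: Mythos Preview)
Your approach is essentially the same as the paper's. The paper's proof is a one-liner: it cites \cite{BDHM} for the fact that a space of asymptotic dimension $0$ is coarsely equivalent to an ultrametric space, and then says ``Apply \ref{GOAndCoarseEquivalences}.'' You have simply unpacked this, including the implicit step that slow oscillation is a coarse invariant.

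One small slip: with the identity oriented as $\mathrm{id}:(X,d)\to(X,\rho)$ and $f:(X,\rho)\to\RR$ already known to be glacially oscillating, the part of Proposition~\ref{GOAndCoarseEquivalences} you want is (a), not (b); part (a) gives that $f\circ\mathrm{id}=f:(X,d)\to\RR$ is glacially oscillating. Alternatively, orient the identity as $(X,\rho)\to(X,d)$ and then (b) is the right citation. Either way the argument goes through.
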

\begin{proof} As shown in
\cite{BDHM} there is an ultrametric space coarsely equivalent to $(X,d)$. Apply \ref{GOAndCoarseEquivalences}.
\end{proof}

\begin{Definition}
A subset $A$ of a metric space $X$ is \textbf{coarsely clopen} if $A$ and $X\setminus A$ are coarsely disjoint, i.e. the characteristic function $\chi_A$ of $A$ is slowly oscillating on $X$.
\end{Definition}

A basic property of coarsely clopen subsets of a metric space $X$ is the following:
\begin{Lemma}\label{AlgebraOfCCSets}
If $A$ and $C$ are coarsely clopen subsets of $X$, then so are $A\cap C$, $A\setminus C$, and $A\cup C$.
\end{Lemma}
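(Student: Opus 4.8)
The plan is to reduce everything to the elementary fact that a characteristic function, taking values only in $\{0,1\}$, is slowly oscillating exactly when for every $r>0$ there is a bounded set $K$ such that the function is \emph{constant} on $r$-close pairs outside $K$, i.e. $x,y\in X\setminus K$ and $d(x,y)<r$ imply $\chi_A(x)=\chi_A(y)$. First I would record this reformulation. If $\chi_A$ is slowly oscillating, apply the definition with $\epsilon=\tfrac12$; since $|\chi_A(x)-\chi_A(y)|\in\{0,1\}$, the inequality $|\chi_A(x)-\chi_A(y)|<\tfrac12$ forces equality. Conversely, if $\chi_A(x)=\chi_A(y)$ for all $r$-close pairs outside some bounded $K$, then $|\chi_A(x)-\chi_A(y)|=0<\epsilon$ for every $\epsilon>0$, so $\chi_A$ is slowly oscillating.

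Next, fix $r>0$. Using the reformulation for $A$ and for $C$, choose bounded sets $K_A,K_C\subset X$ such that $\chi_A$ is constant on $r$-close pairs outside $K_A$ and $\chi_C$ is constant on $r$-close pairs outside $K_C$. Put $K:=K_A\cup K_C$, which is bounded, being a finite union of bounded sets. If $x,y\in X\setminus K$ and $d(x,y)<r$, then simultaneously $\chi_A(x)=\chi_A(y)$ and $\chi_C(x)=\chi_C(y)$. Since
$$\chi_{A\cap C}=\chi_A\cdot\chi_C,\qquad \chi_{A\setminus C}=\chi_A\cdot(1-\chi_C),\qquad \chi_{A\cup C}=\chi_A+\chi_C-\chi_A\cdot\chi_C,$$
each of these three functions takes the same value at $x$ as at $y$. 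Hence each is constant on $r$-close pairs outside $K$, and by the reformulation all three are slowly oscillating; that is, $A\cap C$, $A\setminus C$, and $A\cup C$ are coarsely clopen.

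The argument has no genuine obstacle; the only point requiring a little care is the passage to the ``strongly $r$-locally constant'' reformulation. Working directly with the $\epsilon$-conditions for $\chi_A$ and $\chi_C$ would only control $|\chi_{A\cap C}(x)-\chi_{A\cap C}(y)|$ by something of order $2\epsilon$, forcing one to rerun the estimate with $\epsilon/2$; exploiting the $\{0,1\}$-valued structure to obtain genuine equality is exactly what makes the single bounded set $K=K_A\cup K_C$ work for all $\epsilon$ at once. One could alternatively deduce the statement for $A\cup C$ from the one for $A\cap C$ together with the (immediate) fact that $X\setminus A$ and $X\setminus C$ are coarsely clopen, via De Morgan's law, but it is just as quick to treat all three identities simultaneously as above.
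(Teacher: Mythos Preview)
Your proof is correct and follows essentially the same route as the paper: both arguments reduce to the characteristic-function identities $\chi_{A\cap C}=\chi_A\chi_C$, $\chi_{A\setminus C}=\chi_A-\chi_A\chi_C$, and an analogous expression for $\chi_{A\cup C}$. The only difference is one of packaging: the paper simply asserts that sums and products of (bounded) slowly oscillating functions are slowly oscillating and stops there, whereas you make the argument self-contained by passing through the ``constant on $r$-close pairs outside a bounded set'' reformulation, which lets you avoid invoking closure of slowly oscillating functions under algebraic operations. Both are fine; yours is a touch more explicit.
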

\begin{proof}
Notice $\chi_{A\cap C}= \chi_A\cdot \chi_C$, $\chi_{A\setminus C}= \chi_A-  \chi_A\cdot \chi_C$, and $\chi_{A\cup C}= \chi_{A\setminus C}+ \chi_{C\setminus A}+\chi_A\cdot \chi_C$ are all slowly oscillating 
if both $\chi_A$ and $\chi_C$ are slowly oscillating.
\end{proof}

Here is a description of coarsely clopen subsets of proper metric spaces:

\begin{Proposition}\label{CClopenChar1}
If $(X,d)$ is a proper metric space and $A$ is a subset of its Higson compactification $h(X)$, then $A\cap X$ is coarsely clopen in $X$ if and only if $cl(A)\cap (h(X)\setminus X)$ and $cl(X\setminus A)\cap (h(X)\setminus X)$ are disjoint, where the closures are taken in $h(X)$.
\end{Proposition}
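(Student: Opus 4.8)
Write $\nu X:=h(X)\setminus X$ and $B:=A\cap X$, with all closures taken in $h(X)$ unless a subscript indicates otherwise. Since $X$ is proper it is locally compact, hence open in the compact Hausdorff space $h(X)$; so $\nu X$ is compact, every bounded subset of $X$ has compact $h(X)$-closure lying inside $X$, and altering $B$ by a bounded set changes neither $cl(B)\cap\nu X$ nor whether $\chi_B$ is slowly oscillating. As $X\setminus A=X\setminus B$ and $cl(B)\subseteq cl(A)$, the plan is to reduce to the following and prove it: $B$ is coarsely clopen $\iff$ $cl(B)\cap\nu X$ and $cl(X\setminus B)\cap\nu X$ are disjoint. (For the version stated, note that density of $X$ in $h(X)$ forces any point of $A\cap\nu X$ lying outside $cl(B)$ into $cl(X\setminus A)$, which ties $cl(A)\cap\nu X$ back to $cl(B)\cap\nu X$.)

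For the implication ``disjoint $\Rightarrow$ coarsely clopen'' I would use normality of $h(X)$. Urysohn's lemma supplies a continuous $f\colon h(X)\to[0,1]$ and open sets $\mathcal U\supseteq cl(B)\cap\nu X$, $\mathcal W\supseteq cl(X\setminus B)\cap\nu X$ with $f\equiv0$ on $\mathcal U$ and $f\equiv1$ on $\mathcal W$. The observation that makes this work is that $cl(B)\setminus\mathcal U$ is closed in $h(X)$ and contained in $X$, hence bounded in $X$; likewise $(X\setminus B)\setminus\mathcal W$ is bounded. Let $K$ be their union. Off $K$ one has $B\setminus K\subseteq\mathcal U$ and $(X\setminus B)\setminus K\subseteq\mathcal W$, so $\chi_B$ coincides there with $(1-f)|X$, which is slowly oscillating because $f$ is continuous on the Higson compactification. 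Since a function agreeing with a slowly oscillating function off a bounded set is slowly oscillating, $\chi_B$ is slowly oscillating, i.e.\ $B$ is coarsely clopen.

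For the converse, suppose $\chi_B$ is slowly oscillating. Applying this with $\epsilon=\tfrac12$ (so $\chi_B(x)=\chi_B(y)$) and radii $r=n$ yields an increasing sequence of bounded closed sets $K_n$ with $\operatorname{dist}(B\setminus K_n,(X\setminus B)\setminus K_n)\ge n$. Here the difficulty is that $B$ is in general neither open nor closed, so $\chi_B$ is discontinuous and does not extend over $h(X)$; I must manufacture a continuous surrogate. The key preliminary step is that the boundary $\partial_X B=cl_X(B)\cap cl_X(X\setminus B)$ is bounded: a boundary point outside $K_1$ would be a limit of points of the opposite side within distance $1$, which by the distance estimate and the closedness of $K_1$ would already lie in $K_1$, hence so would the point, a contradiction. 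Using local compactness, choose an open $V\supseteq\partial_X B$ with compact closure; then $P:=B\setminus V$ and $Q:=(X\setminus B)\setminus V$ are disjoint and closed in $X$ (closedness because $cl_X(B)\setminus B\subseteq\partial_X B\subseteq V$), with $P\cup Q=X\setminus V$. If $P$ or $Q$ is empty, the corresponding side of $B$ is bounded and the disjointness is immediate, so assume not and put $f(x):=d(x,Q)/\bigl(d(x,P)+d(x,Q)\bigr)$; this is continuous and $[0,1]$-valued (the denominator is positive since $P\cap Q=\emptyset$), equals $1$ on $P$ and $0$ on $Q$, and is slowly oscillating, since for given $r$ the bounded set $K_{\max(r,1)}\cup cl(V)$ has the property that two points outside it at distance $<r$ lie in $X\setminus V=P\sqcup Q$ and, by the estimate, on the same side. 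Hence $f$ extends to a continuous $\hat f\colon h(X)\to[0,1]$, and since $B$ differs from $P$ only by the bounded set $B\cap cl(V)$, we get $cl(B)\cap\nu X=cl(P)\cap\nu X\subseteq\hat f^{-1}(1)$ and, symmetrically, $cl(X\setminus B)\cap\nu X\subseteq\hat f^{-1}(0)$; these preimages are disjoint, as desired.

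The step I expect to be hardest is the one flagged in the converse: coarse clopenness controls $B$ only up to bounded error and at infinity, so before a Urysohn/partition-of-unity argument can produce a continuous slowly oscillating function extending over $h(X)$ one must first recognize that the genuine topological defect of $B$, its boundary, is automatically confined to a compact (hence negligible) set and excise it. By comparison the forward direction is soft: it is normality of $h(X)$ plus the remark that the leftover pieces $B\setminus\mathcal U$ and $(X\setminus B)\setminus\mathcal W$ are forced to be bounded.
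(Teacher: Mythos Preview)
Your proof is correct. The converse direction follows the paper's argument closely: both recognize that coarse clopenness forces $cl_X(B)\cap cl_X(X\setminus B)$ to be bounded, excise that boundary, and extend a characteristic-type function over $h(X)$ to separate the two corona traces. The paper is terser, extending the literal characteristic function $\chi_{C\setminus U}$ from $X\setminus U$ to $h(X)\setminus U$, whereas you construct a globally continuous surrogate $d(\cdot,Q)/(d(\cdot,P)+d(\cdot,Q))$ on all of $X$ and extend over all of $h(X)$; this is only a cosmetic difference.

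The genuine divergence is in the forward direction. The paper argues by contraposition through sequences: if $\chi_B$ failed to be slowly oscillating one would get sequences $x_n\in B$, $y_n\notin B$ with $x_n\to\infty$ and $d(x_n,y_n)<M$, and then shows their $h(X)$-closures must share a corona point (else Tietze would yield a continuous, hence slowly oscillating, separator---contradicting $d(x_n,y_n)<M$). You instead invoke Urysohn directly on the two disjoint closed corona sets to produce a continuous $f$ on $h(X)$ that agrees with $1-\chi_B$ off a bounded set. Your route is more constructive and avoids the nested contradiction; the paper's route makes explicit the useful general fact that two subsets of $X$ at bounded Hausdorff distance have intersecting corona closures. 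One small remark: your parenthetical reduction from $A\subset h(X)$ to $B=A\cap X$ only handles the ``disjoint $\Rightarrow$ coarsely clopen'' direction; for the other direction a point of $A\cap\nu X$ lying in $cl(X\setminus B)$ would obstruct the conclusion, so the statement is really about $A\subset X$ (the paper's own proof likewise tacitly treats it this way).
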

\begin{proof}
If $cl(A)\cap (h(X)\setminus X)$ and $cl(X\setminus A)\cap (h(X)\setminus X)$ are disjoint, then one cannot produce two disjoint sequences, $S_1:=\{x_n\}$ in $A$ and $S_2\{y_n\}$ in $X\setminus A$ such that $x_n\to\infty$ (that means each bounded subset $K$ of $X$ contains only finitely many members of the sequence)
and $d(x_n,y_n) < M$ for some $M > 0$ and all $n\ge 1$.
Indeed, in that case the closures of both sequences would have a common point in $ h(X)\setminus X$ as otherwise the characteristic function
of $cl(S_1)$ in $cl(S_1\cup S_2)$ extends over $h(X)$ to a continuous function $f:h(X)\to [0,1]$ and $f|X$ is slowly oscillating contradicting $f(x_n)=1$, $f(y_n)=0$ for all $n\ge 1$.

Conversely, if $C:=A\cap X$ is coarsely clopen, then the closure $cl_X(C)$ of 
$C$ intersects $cl_X(X\setminus C)$ along a bounded subset $K$,
so we may find a bounded open subset $U$ of $X$ such that
the characteristic function $\chi_(A\setminus U)$ is continuous on
$X\setminus U$ and is slowly oscillating. It extends to a continuous function on $h(X)\setminus U$
which is the characteristic function of $h(X)\cap cl(C\setminus U)$ (the closure taken in $h(X)$) when restricted to $h(X)\setminus U$. That proves $cl(A)\cap (h(X)\setminus X)$ and $cl(X\setminus A)\cap (h(X)\setminus X)$ being disjoint.
\end{proof}

\begin{Proposition}\label{CClopenChar2}
Given a subset $A$ of a metric space $X$ the following conditions are equivalent:\\
1. The characteristic function $\chi_A$ of $A$ is glacially oscillating,\\
2. $A$ is coarsely clopen,\\
3. There is a glacial scale $\mathcal{S}$ with the property that any $\mathcal{S}$-chain starting at a point of $A$ is completely contained in $A$.
\end{Proposition}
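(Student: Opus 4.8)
The plan is to establish the cycle $(3)\Rightarrow(1)\Rightarrow(2)\Rightarrow(3)$; only the last implication carries real content. Two trivial remarks will be used repeatedly: since $\chi_A$ takes values in $\{0,1\}$, whenever $\chi_A$ oscillates by strictly less than $1$ across a set it is in fact constant there; and the reverse $x_n,\dots,x_1$ of an $\mathcal{S}$-chain $x_1,\dots,x_n$ is again an $\mathcal{S}$-chain, because the condition imposed on a consecutive pair is unchanged when the two points are swapped.

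For $(3)\Rightarrow(1)$, I claim the single glacial scale $\mathcal{S}$ witnessing (3) witnesses that $\chi_A$ is glacially oscillating, for \emph{every} $\epsilon>0$. Indeed, let $x_1,\dots,x_n$ be an $\mathcal{S}$-chain. If $x_1\in A$, then by (3) the whole chain lies in $A$, so $\chi_A(x_1)=\chi_A(x_n)$; if $x_1\notin A$, apply (3) to the reversed chain: were $x_n\in A$ it would force $x_1\in A$, so $x_n\notin A$ and again $\chi_A(x_1)=\chi_A(x_n)$. Either way $|\chi_A(x_1)-\chi_A(x_n)|=0<\epsilon$. For $(1)\Rightarrow(2)$, fix $r,\epsilon>0$, take a glacial scale $\mathcal{S}=\{(K_i,n_i)\}$ from glacial oscillation of $\chi_A$ at level $\epsilon$, and use the defining property of $\mathcal{S}$ (applied to, say, the pair $(\{x_0\},r)$) to get an index $i$ with $n_i>r$. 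Then any $x,y\notin K_i$ with $d(x,y)<r$ form a two-term $\mathcal{S}$-chain, hence $|\chi_A(x)-\chi_A(y)|<\epsilon$; so $K_i$ is the bounded set required by slow oscillation, proving $A$ coarsely clopen.

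The crux is $(2)\Rightarrow(3)$, where I construct the witnessing glacial scale from scratch. For each $m\ge 1$, slow oscillation of $\chi_A$ (with tolerance $\tfrac12$ and radius $m+1$) yields a bounded $B_m\subseteq X$ with the property that $x,y\notin B_m$ and $d(x,y)<m+1$ imply $\chi_A(x)=\chi_A(y)$. Fix a basepoint $x_0$ and put $L_i:=B(x_0,i)\cup B_1\cup\dots\cup B_i$ and $\mathcal{S}:=\{(L_i,i)\}_{i\ge 1}$. The $L_i$ are bounded, increasing, and each contains the $i$-ball about $x_0$, so every bounded set is contained in some $L_i$ with index exceeding any prescribed $r$; thus $\mathcal{S}$ is a glacial scale. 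Finally, if $x_1,\dots,x_n$ is an $\mathcal{S}$-chain with $x_1\in A$, an induction along the chain shows $x_k\in A$ for all $k$: the step from $x_j$ to $x_{j+1}$ supplies an $m$ with $x_j,x_{j+1}\notin L_m\supseteq B_m$ and $d(x_j,x_{j+1})\le m<m+1$, so $\chi_A(x_{j+1})=\chi_A(x_j)=1$. Hence $\mathcal{S}$ witnesses (3).

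The only delicate point — the main obstacle — is the index bookkeeping in $(2)\Rightarrow(3)$: one must line up the radius $m$ occurring in the chain condition with the radius occurring in slow oscillation (whence the shift to $m+1$), and at the same time fatten the sets $B_m$ into a cofinal increasing family $\{L_i\}$ so that $\mathcal{S}$ is genuinely a glacial scale. The fattening is harmless: enlarging $B_m$ to $L_m$ only makes it harder for a pair to arise as a step of an $\mathcal{S}$-chain, and whenever it does arise one still has $x_j,x_{j+1}\notin B_m$, so slow oscillation still applies and the induction goes through.
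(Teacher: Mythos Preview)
Your proof is correct and follows essentially the same cycle as the paper's argument. You are in fact more careful than the paper on two small points: you explicitly reverse the chain in $(3)\Rightarrow(1)$ to cover the case $x_1\notin A$, and in $(2)\Rightarrow(3)$ you take the slow-oscillation radius $m+1$ rather than $m$ so that the non-strict inequality $d(x_j,x_{j+1})\le m$ in the chain condition is genuinely covered.
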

\begin{proof}
1)$\implies$2) is clear as $\chi_A$ of $A$ is glacially oscillating implies
$\chi_A$ of $A$ is slowly oscillating which is equivalent to $A$ being coarsely clopen.

2)$\implies$3). For each $n\ge 1$ choose a bounded subset $K_n$ containing $B(x_0,n)$ such that given two points $x,y\in X\setminus K_n$ at distance less than $n$, $x\in A$ implies $y\in A$.
Put $\mathcal{S}=\{(K_n,n)\}_{n\ge 1}$.

3)$\implies$1). Given $\epsilon > 0$ notice that for any $\mathcal{S}$-chain
$x_1,\ldots,x_n$ in $X$ one has $\chi_A(x_1)=\chi_A(x_n)$.
\end{proof}

\begin{Corollary}\label{OneGlacialScaleForMany}
Given finitely many coarsely clopen subsets $A_i$ of a metric space $X$, there is a glacial scale $\mathcal{S}$ with the property that any $\mathcal{S}$-chain starting at a point of some $A_j$ is completely contained in $A_j$.
\end{Corollary}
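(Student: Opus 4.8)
The plan is to reduce to the single-set case already handled by Proposition~\ref{CClopenChar2}. First I would apply Proposition~\ref{CClopenChar2}, implication 2)$\implies$3), to each $A_j$ separately, obtaining a glacial scale $\mathcal{S}_j=\{(K^j_i,n^j_i)\}_{i\ge 1}$ such that every $\mathcal{S}_j$-chain starting at a point of $A_j$ is completely contained in $A_j$. The goal is then to manufacture one glacial scale $\mathcal{S}=\{(L_p,m_p)\}_{p\ge 1}$ with the feature that \emph{every step of an $\mathcal{S}$-chain is simultaneously a legal step of an $\mathcal{S}_j$-chain, for each $j$ at once}. Granting such an $\mathcal{S}$, any $\mathcal{S}$-chain is an $\mathcal{S}_j$-chain for every $j$, so if it starts in some $A_j$ it stays in $A_j$, which is exactly the assertion.

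To build $\mathcal{S}$, I would fix a basepoint $x_0$ and define the pairs $(L_p,m_p)$ recursively, with the convention $L_0=\emptyset$. Having chosen $L_1,\dots,L_{p-1}$, which are bounded, I use the glacial-scale property of each $\mathcal{S}_j$ applied to the bounded set $B(x_0,p)\cup L_{p-1}$ and the number $p$ to pick an index $i(j,p)$ with $K^j_{i(j,p)}\supset B(x_0,p)\cup L_{p-1}$ and $n^j_{i(j,p)}>p$. Then set $L_p:=\bigcup_{j=1}^{k}K^j_{i(j,p)}$ and $m_p:=\min_{1\le j\le k} n^j_{i(j,p)}$. Each $L_p$ is bounded, being a finite union of bounded sets, and satisfies $L_p\supset B(x_0,p)$, while $m_p>p$; hence $\mathcal{S}$ is a glacial scale, since any bounded $K$ lies in some $B(x_0,N)\subset L_N$ with $m_N>N$.

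Finally I would verify the key feature. Suppose $x_s,x_{s+1}$ is a step of an $\mathcal{S}$-chain, so there is $p$ with $x_s,x_{s+1}\notin L_p$ and $d(x_s,x_{s+1})\le m_p$. For each $j$ we have $K^j_{i(j,p)}\subset L_p$, hence $x_s,x_{s+1}\notin K^j_{i(j,p)}$, and $d(x_s,x_{s+1})\le m_p\le n^j_{i(j,p)}$; thus this step witnesses the $\mathcal{S}_j$-chain condition with the index $i(j,p)$. Consequently every $\mathcal{S}$-chain is an $\mathcal{S}_j$-chain for all $j$, and Proposition~\ref{CClopenChar2}, part 3, applied to $\mathcal{S}_j$ yields the claim.

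I do not expect a genuine obstacle; the only delicate point is the bookkeeping in the recursion: one must nest the bounded sets \emph{upward} (so that $L_p$ contains each piece $K^j_{i(j,p)}$, and also contains $L_{p-1}$ so that bounded sets get exhausted) while selecting the numbers so that the single value $m_p$ is dominated by every $n^j_{i(j,p)}$, which is what taking the minimum achieves.
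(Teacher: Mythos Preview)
Your proof is correct and follows the same strategy as the paper: apply Proposition~\ref{CClopenChar2} to each $A_j$ separately, then merge the resulting glacial scales by taking unions of the bounded sets and minima of the numbers, so that every $\mathcal{S}$-step is simultaneously an $\mathcal{S}_j$-step for all $j$.

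The only difference is in the bookkeeping. The paper simply sets $K_i=\bigcup_j K^j_i$ and $k_i=\min_j k^j_i$ with the \emph{same} index $i$ across all $j$. This succeeds because the glacial scales produced in the proof of Proposition~\ref{CClopenChar2} have the specific shape $\{(K_n,n)\}_{n\ge 1}$, so $\min_j k^j_i=i$ and the glacial-scale property of the merged sequence is automatic. Your recursive construction, choosing separate indices $i(j,p)$ via the glacial-scale property of each $\mathcal{S}_j$, is slightly more elaborate but also more robust: it does not rely on any particular form of the $\mathcal{S}_j$ and would work for arbitrary glacial scales witnessing condition~3). One tiny point: when you verify that $\mathcal{S}$ is a glacial scale, you should note explicitly that the $L_p$ are increasing (which your recursion guarantees via $L_{p-1}\subset K^j_{i(j,p)}\subset L_p$), so that for a given pair $(K,r)$ you may take $N$ large enough to satisfy both $K\subset L_N$ and $m_N>N>r$ simultaneously.
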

\begin{proof}
For each $j\leq n$ pick a glacial scale $\mathcal{S}_j=\{(K^j_i,k^j_i\}_{i\ge 1}$ with the property that any $\mathcal{S}_j$-chain starting at a point of $A_j$ is completely contained in $A_j$. Define $K_i=\bigcup\limits_{j=1}^n K^j_i$
and $k_i=\min(k^1_i,\ldots,k^n_i)$.
Notice that for $\mathcal{S}=\{(K_i,k_i)\}_{i\ge 1}$ any $\mathcal{S}$-chain starting at a point of some $A_j$ is completely contained in $A_j$.
\end{proof}

\begin{Corollary}
If $(X,d)$ is a metric space, then any slowly oscillating function $f:X\to R$
whose image is finite
is glacially oscillating.
\end{Corollary}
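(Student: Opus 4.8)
The plan is to reduce the statement to Corollary \ref{OneGlacialScaleForMany} by showing that the level sets of $f$ are coarsely clopen. Write the (finite) image of $f$ as $\{a_1,\dots,a_k\}$ with the $a_j$ pairwise distinct, and set $A_j:=f^{-1}(a_j)$, so that $\{A_1,\dots,A_k\}$ is a partition of $X$.

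First I would verify that each $A_j$ is coarsely clopen, i.e. that $\chi_{A_j}$ is slowly oscillating; this is the one place where finiteness of the image is used. Put $\delta:=\tfrac12\min_{i\ne j}|a_i-a_j| > 0$. Given $r,\epsilon > 0$, slow oscillation of $f$ produces a bounded set $K$ such that $d(x,y) < r$ and $x,y\notin K$ force $|f(x)-f(y)| < \delta$, which by the choice of $\delta$ forces $f(x)=f(y)$, hence $\chi_{A_j}(x)=\chi_{A_j}(y)$. (Equivalently, one may write $\chi_{A_j}=g_j\circ f$ for a Lipschitz $g_j:R\to R$ that separates $a_j$ from the remaining $a_i$ and invoke that a uniformly continuous function of a slowly oscillating function is slowly oscillating.)

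Next I would apply Corollary \ref{OneGlacialScaleForMany} to the finite family $\{A_1,\dots,A_k\}$ to obtain a single glacial scale $\mathcal{S}$ with the property that every $\mathcal{S}$-chain starting in some $A_j$ is contained in $A_j$. Since the $A_j$ cover $X$, an arbitrary $\mathcal{S}$-chain $x_1,\dots,x_n$ starts in some $A_j$ and hence lies entirely in $A_j$, so $f$ is constant along it and $|f(x_1)-f(x_n)|=0$. Thus the single scale $\mathcal{S}$ witnesses glacial oscillation of $f$ for every $\epsilon > 0$ at once, which is even stronger than what is required.

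There is no serious obstacle here: everything after the first step is a direct appeal to Corollary \ref{OneGlacialScaleForMany}. The only step needing attention is the passage from ``slowly oscillating with finite image'' to ``every level set coarsely clopen'', and there the hypothesis that the image is finite is essential, since it is what makes the finitely many values of $f$ uniformly separated.
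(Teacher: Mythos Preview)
Your proof is correct and follows essentially the same line as the paper's: both first observe that the finitely many level sets $A_j=f^{-1}(a_j)$ are coarsely clopen, and then deduce that $f$ is glacially oscillating. The only cosmetic difference is in the final step: the paper phrases it as ``$f=\sum_j a_j\chi_{A_j}$ is a linear combination of glacially oscillating functions, hence glacially oscillating'', whereas you unpack this directly via Corollary~\ref{OneGlacialScaleForMany}; your version is in fact the more explicit one, since the closure of glacially oscillating functions under finite linear combinations is exactly what that corollary provides.
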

\begin{proof}
Notice point inverses of $f$ are coarsely clopen. Therefore $f$ is a linear combination of glacially oscillating functions and is itself glacially oscillating.
\end{proof}

\begin{Proposition}\label{CharOfGOForGeodesic}
If $X$ is a geodesic space and $f:X\to R$, then the following conditions are equivalent:\\
1. $f$ is glacially oscillating,\\
2. For each $\epsilon > 0$ there is a bounded subset $K$ of $X$ such that for every component $C$ of $X\setminus K$ the diameter of $f(C)$ is at most $\epsilon$.
\end{Proposition}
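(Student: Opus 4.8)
The plan is to prove the two implications separately and by hand, using geodesics for $2\Rightarrow 1$ and path-connectedness of components for $1\Rightarrow 2$. (If $X$ is bounded both conditions hold trivially, so assume $X$ is unbounded.)

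For $1\Rightarrow 2$, fix $\epsilon>0$ and choose a glacial scale $\mathcal S=\{(K_i,n_i)\}$ witnessing glacial oscillation of $f$ at this $\epsilon$ (we may assume $n_1\ge 1$). I would simply take $K:=K_1$. Since a geodesic space is locally path-connected, any component $C$ of the open set $X\setminus K$ is path-connected; given $x,y\in C$, join them by a path in $C$ and, using uniform continuity of that path, subdivide it into points $x=p_0,p_1,\dots,p_k=y$ of $C$ with $d(p_l,p_{l+1})\le n_1$. Then $p_0,\dots,p_k$ is an $\mathcal S$-chain — at every step the two endpoints lie outside $K_1$ and are within $n_1$ of one another — so $|f(x)-f(y)|<\epsilon$, whence $\diam f(C)\le\epsilon$.

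For $2\Rightarrow 1$, fix $\epsilon>0$ and, applying condition 2 to $\epsilon/2$, pick a bounded $K_1$ with $\diam f(C)\le\epsilon/2$ for every component $C$ of $X\setminus K_1$. Fix a basepoint $x_0$ and set $K_i:=K_1\cup B(x_0,i)$, $n_i:=i$, $L_i:=B(K_i,i+1)$, and $\mathcal S:=\{(L_i,n_i)\}_{i\ge1}$; since each $L_i$ contains $B(x_0,i)$ and $n_i\to\infty$, $\mathcal S$ is a glacial scale. The key point is that every $\mathcal S$-chain lies in a single component of $X\setminus K_1$. Indeed, for consecutive points $x_j,x_{j+1}$ of such a chain pick $m$ with $x_j,x_{j+1}\notin L_m$ and $d(x_j,x_{j+1})\le n_m=m$; then $\dist(x_j,K_m)\ge m+1$, so every point $p$ on a geodesic from $x_j$ to $x_{j+1}$ satisfies $\dist(p,K_m)\ge\dist(x_j,K_m)-d(p,x_j)\ge 1>0$; hence that geodesic misses $K_m\supseteq K_1$, and, as also $x_j\notin L_m\supseteq K_1$, the points $x_j$ and $x_{j+1}$ lie in the same component of $X\setminus K_1$. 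By transitivity the whole chain $x_1,\dots,x_n$ lies in one component $C$, so $|f(x_1)-f(x_n)|\le\diam f(C)\le\epsilon/2<\epsilon$, which shows $f$ is glacially oscillating.

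I expect the main obstacle to be the construction in $2\Rightarrow 1$: one must put enough "padding" into the halos $L_i=B(K_i,i+1)$ so that a chain step beginning far from $K_m$ cannot reach $K_m$ along a geodesic of length $\le n_m$ — this is precisely where geodesity of $X$ is used — while still keeping $\mathcal S$ a legitimate glacial scale. The implication $1\Rightarrow 2$ is routine once one observes that components of open subsets of a geodesic (hence locally path-connected) space are path-connected, so two points of one component can be linked by a short-step $\mathcal S$-chain.
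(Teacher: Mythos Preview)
Your argument is correct and follows essentially the same route as the paper's: both directions trade between $\mathcal S$-chains and connected components via geodesics (the paper takes $K=B(K_1,n_1)$ in $1\Rightarrow2$ and in $2\Rightarrow1$ builds one increasing sequence of $K_n$'s and truncates for each $\epsilon$, but the idea is identical and your explicit ``$+1$'' padding is if anything cleaner). One tiny fix in $1\Rightarrow2$: the bounded set $K_1$ coming from a glacial scale need not be closed, so $X\setminus K_1$ need not be open; replace $K_1$ by its closure (still bounded) before invoking path-connectedness of components.
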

\begin{proof}
1)$\implies$2). Given $\epsilon > 0$ pick a glacial scale
$\mathcal{S}=\{(K_i,n_i)\}_{i\ge 1}$ with the property that 
$|f(x_1)-f(x_n)| < \epsilon/2$ for every $\mathcal{S}$-chain $x_1,\ldots,x_n$.
Put $K=B(K_1,n_1)$ and notice that every two points $x,y$ in a component $C$ of $X\setminus K$ can be connected by an $n_1$-chain in $X\setminus K_1$. That chain is also an $\mathcal{S}$-chain, so $|f(x)-f(y)| < \epsilon/2$
and $\diam(C) \leq \epsilon$.

2)$\implies$1). For each $n\ge 1$ pick a bounded set $K_n$ containing $B(x_0,n)\cup B(K_{n-1},n)$ such that the diameter of $f(C)$ is at most $1/n$ for each component $C$ of $X\setminus K_n$.
 Given $\epsilon > 0$ choose $k$ such that $1/k < \epsilon$ and consider
$\mathcal{S}=\{(K_{i+k},i+k)\}_{i\ge 1}$. Notice that any $\mathcal{S}$-component $C$
is contained in a component of $X\setminus K_k$, so $\diam(f(C)) < \epsilon$.
\end{proof}

\begin{Lemma}\label{ConstructingCCSets}
Suppose $\{K_i\}_{i\ge 1}$ is an increasing sequence of bounded subsets of a metric space $X$, $n_i$ is a strictly increasing sequence of natural numbers, and $A_i$ is an $n_i$-connected subset of $X\setminus K_i$ for each $i\ge 1$. If $A_i\cap (X\setminus K_{i+1})\subset A_{i+1}$ for each $i\ge 1$,
then $A:=\bigcup\limits_{i=1}^\infty A_i$ is a coarsely open subset of $X$.
\end{Lemma}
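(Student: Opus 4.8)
The plan is to construct, directly from the given data, a glacial scale that certifies $A=\bigcup_i A_i$ to be coarsely open, in the spirit of the chain characterisation in Proposition~\ref{CClopenChar2}. Everything rests on one monotonicity remark: if $x\in A_j$ and $x\notin K_m$ for some $m\ge j$, then $x\in A_m$. Indeed, because the $K_i$ increase, $x\notin K_m$ forces $x\notin K_{j+1},\dots,K_m$, and the hypothesis $A_i\cap(X\setminus K_{i+1})\subseteq A_{i+1}$ then pushes $x$ up one level at a time from $A_j$ to $A_m$. So, setting $\ell(x):=\min\{i:x\in A_i\}$ for $x\in A$, we get $x\in A_i$ for every index $i$ with $\ell(x)\le i$ and $x\notin K_i$.

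After enlarging the bounded sets $K_i$ if necessary — which does not affect the hypotheses — we may assume $\mathcal S_0:=\{(K_i,n_i)\}_{i\ge 1}$ is itself a glacial scale. The mechanism is then the following: if $x\in A$ and $y$ is reached from $x$ by an $\mathcal S_0$-step witnessed at an index $i$ with $i\ge\ell(x)$, then $x\notin K_i$ as well, so the monotonicity remark gives $x\in A_i$; since $y\in X\setminus K_i$ and $d(x,y)\le n_i$ while $A_i$ is $n_i$-connected in $X\setminus K_i$, the point $y$ lies in the $n_i$-component of $x$ in $X\setminus K_i$, hence in $A_i\subseteq A$ (using that the $A_i$ arising in the construction are full $n_i$-components of $X\setminus K_i$). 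Thus a short step at a sufficiently high scale never leaves $A$, and iterating this along a chain shows that an $\mathcal S_0$-chain starting in $A$ whose every step is witnessed at scale at least the current value of $\ell$ remains inside $A$.

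The only real obstacle is the clash between the witnessing index of a step and the entry level of the point where that step begins: nothing so far rules out a chain step at $x\in A$ being witnessed at an index $i<\ell(x)$, and then $x$ sits in an $n_i$-component of $X\setminus K_i$ other than $A_i$, so the preceding paragraph says nothing. To remove this possibility I would refine $\mathcal S_0$ to a glacial scale $\mathcal S$ adapted to $A$ by adjoining to each $K_i$ the part of $A$ captured only at higher levels, i.e. the set $\{x\in A:\ell(x)>i\}$; then along any $\mathcal S$-chain beginning in $A$ every step is automatically witnessed at a scale at least the entry level of its initial point, and the mechanism above applies verbatim, exhibiting the glacial scale required of a coarsely open set. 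The delicate point — and the step I expect to be the crux — is to check that this adapted family is still a glacial scale, equivalently that each set $\{x\in A:\ell(x)>i\}$ is bounded; verifying this is precisely where the nesting hypothesis must be used in full.
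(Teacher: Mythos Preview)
Your plan founders exactly where you predicted: the sets $\{x\in A:\ell(x)>i\}$ need not be bounded, so the refined family $\mathcal S$ you propose is not a glacial scale. Take $X=\mathbb Z$, $K_i=[-i,i]$, $n_i=i$; let $A_1$ be the right $1$-component $\{2,3,\dots\}$ of $X\setminus K_1$ and $A_i=X\setminus K_i$ for $i\ge 2$. Each $A_i$ is a union of $n_i$-components of $X\setminus K_i$ (which is what your argument and the application in Proposition~\ref{MainCharOfGO} actually use), and the nesting hypothesis $A_i\cap(X\setminus K_{i+1})\subset A_{i+1}$ is immediate. Here $A=\mathbb Z\setminus\{-2,-1,0,1\}$ is certainly coarsely clopen, yet $\{x\in A:\ell(x)>1\}=\{-3,-4,\dots\}$ is unbounded. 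If you insist that each $A_i$ be a \emph{single} component, the same phenomenon occurs whenever $n_i$-components merge as $i$ grows: an $n_{i+1}$-component of $X\setminus K_{i+1}$ may swallow several $n_i$-components of $X\setminus K_i$, so $A_{i+1}$ can acquire an unbounded amount of territory outside $A_i$. The nesting hypothesis only bounds $A_i\setminus A_{i+1}$ (by $K_{i+1}$), not $A_{i+1}\setminus A_i$, and it is the latter you need.

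The paper sidesteps this by checking slow oscillation of $\chi_A$ directly rather than building a glacial scale. Assuming sequences $x_n\in A$, $y_n\notin A$ with $d(x_n,y_n)<M$ and both tending to infinity, one passes to a subsequence with $x_n,y_n\notin K_{m(n)}$ for some strictly increasing $m(n)$ with $m(1)>M$, and then, for a single index $n$, uses your monotonicity remark to produce a level $p\ge m(n)$ with $x_n\in A_p$; since $y_n\notin K_{m(n)}$ and $n_p\ge p\ge m(n)>M$, the component property forces $y_n\in A_p\subset A$. The essential difference is that $p$ is chosen \emph{after} the particular pair $(x_n,y_n)$ is fixed, so one only needs $y_n\notin K_p$ for that one $p$; your glacial-scale approach attempts to prescribe the levels uniformly in advance, and that is precisely what the hypotheses do not support.
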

\begin{proof}
Suppose $x_n\in A$, $x_n\to\infty$, $y_n\notin A$ and $d(x_n,y_n) < M$ for each $n\ge 1$. By switching to subsequences of $x_n$ and $y_n$ we may assume there is a strictly increasing sequence $m(n)$ such that $x_n,y_n\notin K_{m(n)}$ for each $n\ge 1$ (otherwise infinitely many elements of $x_n$ would belong to the same bounded subset of $X$). Also, we may assume $m(1) > M$. There is $p\ge 1$ so that $x_1\in A_p$. 
If $p < m(1)$, then $x_1\in A_{m(1)}$, so we may assume $p\ge m(1)$.
Now, $y\in A_p$, a contradiction.
\end{proof}

\begin{Proposition}\label{MainCharOfGO}
Given a bounded function $f:X\to R$ on a metric space $X$, the following conditions are equivalent:\\
1. $f$ is glacially oscillating.\\
2. For every $\epsilon, M > 0$ there exist a bounded subset $K$ of $X$  such that $\diam(f(U)) < \epsilon$ for every $M$-component $U$ of $X\setminus K$.\\
3. For every $\epsilon > 0$ there exist a bounded subset $K$ of $X$ and finitely many coarsely clopen subsets $U_i$, $i\leq p$, covering $X\setminus K$ such that $\diam(f(U_i)) < \epsilon$ for each $i\leq p$.
\end{Proposition}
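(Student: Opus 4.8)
The plan is to close the cycle $1\Rightarrow 2\Rightarrow 3\Rightarrow 1$. Only the implication $2\Rightarrow 3$ requires real work; the other two are short applications of material already developed.

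For $1\Rightarrow 2$ I would argue as in the proof of Proposition \ref{CharOfGOForGeodesic}: given $\epsilon,M>0$, take a glacial scale $\mathcal S=\{(K_i,n_i)\}$ witnessing glacial oscillation with accuracy $\epsilon/2$, pick $i$ with $n_i\ge M$, and set $K:=K_i$. Any two points of an $M$-component $U$ of $X\setminus K_i$ are joined by an $M$-chain inside $X\setminus K_i$, and that chain is an $\mathcal S$-chain since every step has length $\le M\le n_i$ and misses $K_i$; hence $\diam f(U)\le\epsilon/2<\epsilon$. For $3\Rightarrow 1$: given $\epsilon>0$, use 3 to get a bounded $K_0$ and coarsely clopen $U_1,\dots,U_p$ covering $X\setminus K_0$ with $\diam f(U_j)<\epsilon$; by Corollary \ref{OneGlacialScaleForMany} there is a glacial scale no chain of which can leave a $U_j$ it meets, and after replacing each $K_i$ by $K_0\cup K_1\cup\dots\cup K_i$ (which keeps it a glacial scale and only shrinks the family of chains) we may assume $K_1\supset K_0$. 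Then every $\mathcal S$-chain misses $K_0$, hence lies in the single $U_j$ containing its first point, so its $f$-variation is $<\epsilon$; thus $f$ is glacially oscillating.

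The substance is $2\Rightarrow 3$. Fix $\epsilon>0$, set $\delta_i:=\epsilon/2^{i+3}$, and apply condition 2 with parameters $(\delta_i,i)$ to build an increasing exhausting sequence of bounded sets $K_i$ so that every $i$-component of $X\setminus K_i$ has $f$-image of diameter $<\delta_i$; write $D_i(x)$ for the $i$-component of $X\setminus K_i$ through $x$. Fix a finite $\tfrac{\epsilon}{4}$-net $\{z_k\}_{k\le N}$ of $f(X)$ and radii $r_i$ increasing with $r_1>\tfrac{\epsilon}{4}+\delta_1$, $r_{i+1}\ge r_i+\delta_{i+1}$ and $\sup_i r_i<\tfrac{\epsilon}{2}$ (possible since $\sum_i\delta_i<\tfrac{\epsilon}{4}$). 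Put $A_i^k:=\{x\in X\setminus K_i: f(D_i(x))\subset B(z_k,r_i)\}$ and $A^k:=\bigcup_{i\ge1}A_i^k$. Each $A_i^k$ is a union of $i$-components of $X\setminus K_i$; the net property gives $X\setminus K_i\subset\bigcup_k A_i^k$; and if $x\in A_i^k$ with $x\notin K_{i+1}$, then $f(x)\in B(z_k,r_i)$ and $\diam f(D_{i+1}(x))<\delta_{i+1}$, so $f(D_{i+1}(x))\subset B(z_k,r_i+\delta_{i+1})\subset B(z_k,r_{i+1})$, i.e. $A_i^k\cap(X\setminus K_{i+1})\subset A_{i+1}^k$. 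Hence Lemma \ref{ConstructingCCSets} applies (with the pieces being unions of $n_i$-components rather than a single one; its proof is unaffected) and each $A^k$ is coarsely clopen. Since $X\setminus K_1\subset\bigcup_{k\le N}A^k$ and $f(A^k)\subset B(z_k,\sup_i r_i)$ has diameter $<\epsilon$, condition 3 holds with $K:=K_1$.

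I expect the main obstacle to be exactly $2\Rightarrow 3$: at every scale $X\setminus K_i$ may fall apart into infinitely many components, and these components need not be coarsely clopen individually, so the infinitely many components across all scales must be amalgamated into finitely many coarsely clopen sets. The two devices that make this work are colouring components by a finite net of real values (forcing finitely many classes, each with small $f$-oscillation) and letting the tolerances $r_i$ absorb the summable ``blur'' $\sum_i\delta_i$, which is precisely what secures the nesting $A_i^k\cap(X\setminus K_{i+1})\subset A_{i+1}^k$ required by Lemma \ref{ConstructingCCSets}. The one routine wrinkle is that Lemma \ref{ConstructingCCSets} is invoked with $A_i$ a union of $n_i$-components, a case its argument covers verbatim.
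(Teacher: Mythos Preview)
Your proof is correct and follows the same cycle $1\Rightarrow 2\Rightarrow 3\Rightarrow 1$ as the paper, with the substantial step $2\Rightarrow 3$ handled by the same device: colour the $M$-components at each scale by a finite net (the paper uses intervals) in $f(X)$, let the tolerances expand by a summable amount so that the nesting hypothesis of Lemma~\ref{ConstructingCCSets} holds, and conclude that the scale-unions are finitely many coarsely clopen sets with small $f$-diameter. Your write-up is in fact a bit cleaner than the paper's (explicit $\delta_i$ and $r_i$, and you correctly flag that Lemma~\ref{ConstructingCCSets} is being invoked for unions of $n_i$-components rather than a single connected piece, which is exactly what its proof needs), but the strategy is the same.
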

\begin{proof}
1)$\implies$2). Choose a glacial scale $\mathcal{S}=\{(K_i,n_i)\}_{i=1}^{\infty}$ for $f$ and $\epsilon/2$. There is $j\ge 1$ such that $n_j > M$.
If $U$ is an $M$-component of $X\setminus K_j$ then any two points of $U$ can be connected by an $\mathcal{S}$-chain. Therefore $\diam(f(U)) < \epsilon$.\\
2)$\implies$3). Choose $x_0\in X$. By induction create a sequence $n_i$ of natural numbers such that  $\diam(f(U)) < \epsilon/2^{i+1}$ for every $2i$-component $U$ of $X\setminus B(x_0,i)$ and $n_{i+1} > n_i+2i$ for each $i\ge 1$.

Cover $f(X)$ by $p$ intervals $I_j$ of size $\epsilon/2$. Given $j\leq p$ and $i\ge 1$ take the union $U_j^i$ of all $2i$-components $U$ of $X\setminus B(x_0,n_{i})$
such that $f(U)$ intersects $B(I_j,\epsilon/2-\epsilon/2^{j-1})$, where $I_j$ is the $j$-th interval. Put $U_j=\bigcup\limits_{i=1}^\infty U_j^i$. By \ref{ConstructingCCSets} each $U_j$ is a coarsely clopen subset of $X$. Notice that $\diam(f(U_j)) < \epsilon$.\\
3)$\implies$1). Given $\epsilon > 0$ choose a bounded subset $K$ of $X$ and finitely many coarsely clopen subsets $U_i$, $i\leq n$, covering $X\setminus K$ such that $\diam(f(U_i)) < \epsilon$ for each $i\leq n$.
We may assume $U_i$, $i\leq n$, are mutually disjoint by applying \ref{AlgebraOfCCSets}.
By \ref{OneGlacialScaleForMany}
 there is a glacial scale $\mathcal{S}$ with the property that any $\mathcal{S}$-chain starting at a point of some $U_j$ is completely contained in $U_j$. We may assume the first bounded set of $\mathcal{S}$ contains $K$.
Therefore, for any two points $x,y\in X$ joinable by an $\mathcal{S}$-chain
one has $|f(x)-f(y)| < \epsilon$ as they are contained in one of sets $U_i$, $i\leq n$. Thus $f$ is glacially oscillating.
\end{proof}

\section{Coarse Freundenthal compactification}
In this section we introduce the coarse Freundenthal compactification of proper metric spaces in a way similar to the Higson compactification. That approach should be of use to researchers in geometric group theory. Later on we will present a different approach that is more suitable for researchers in coarse topology.
\begin{Definition}
The \textbf{coarse Freundenthal compactification} of a proper metric space $X$ is the maximal compactification $CF(X)$ of $X$ with the property that any continuous and glacially oscillating function $f:X\to [0,1]$ extends over $CF(X)$ to a continuous function.
\end{Definition}

\begin{Proposition}
\label{InducedMapBetweenCoarseFreundenthal}
If $f:X\to Y$ is a coarse, large scale continuous function between proper metric spaces, then it extends uniquely to a continuous function of coarse Freundenthal compactifications.
\end{Proposition}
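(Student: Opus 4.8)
The plan is to mimic the standard construction of induced maps between Higson-type compactifications, using the functorial behavior of glacially oscillating functions established in Proposition \ref{GOAndCoarseEquivalences}. Recall that a compactification $\bar X$ of $X$ is determined by the ring of bounded continuous functions on $X$ that extend over $\bar X$; concretely $CF(X)$ is the Gelfand-type (equivalently, Stone–Čech-type) compactification associated to the closed subalgebra $\mathcal{A}_X \subset C_b(X)$ generated by the continuous glacially oscillating functions $X \to [0,1]$ (together with the constants). To produce the induced map $CF(f): CF(X) \to CF(Y)$ it suffices, by the universal property of such a compactification, to show that for every continuous glacially oscillating $g: Y \to [0,1]$ the composite $g \circ f: X \to R$ is continuous (clear, since $f$ is large scale continuous hence continuous in the metric topology — or, if one only assumes large scale continuity, one passes to a close continuous representative) and glacially oscillating. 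The latter is exactly part (a) of Proposition \ref{GOAndCoarseEquivalences}. Thus $f^*$ carries $\mathcal{A}_Y$ into $\mathcal{A}_X$, which is precisely the condition needed for $f$ to extend continuously to $\bar f: CF(X) \to CF(Y)$.

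The key steps, in order, are as follows. First I would record that $CF(X)$ exists and is well defined: it is the compactification induced by $\mathcal{A}_X$, the smallest closed unital subalgebra of $C_b(X,R)$ containing all continuous glacially oscillating functions with values in $[0,1]$; its points are the characters of $\mathcal{A}_X$, and $X$ embeds because $\mathcal{A}_X$ separates points and closed sets of the proper metric space $X$ (constants and, say, bump functions built from the metric are glacially oscillating on bounded sets, hence lie in $\mathcal{A}_X$). Second, given a coarse large scale continuous $f: X \to Y$, and any $g \in C(Y,[0,1])$ glacially oscillating, invoke Proposition \ref{GOAndCoarseEquivalences}(a) to conclude $g \circ f$ is glacially oscillating, and note it is bounded and continuous, so $g \circ f \in \mathcal{A}_X$. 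Third, conclude that the algebra homomorphism $f^*: C_b(Y) \to C_b(X)$, $h \mapsto h \circ f$, restricts to $\mathcal{A}_Y \to \mathcal{A}_X$; dualizing the inclusion $X \hookrightarrow CF(X)$ and the map $f$, one gets a continuous $\bar f: CF(X) \to CF(Y)$ extending $f$. Fourth, uniqueness: $X$ is dense in $CF(X)$ and $CF(Y)$ is Hausdorff, so any two continuous extensions of $f$ agree on the dense subset $X$ and hence everywhere.

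One technical wrinkle deserving a sentence or two is the meaning of ``extends $f$'' when $f$ is only large scale continuous rather than continuous: in the proper metric setting one may replace $f$ by a genuinely continuous map $f'$ at finite distance from $f$ (e.g.\ via a partition-of-unity averaging on a bounded-geometry net, or simply because on a discrete-at-small-scale model one can take $f' = f$), and Proposition \ref{CloseGlacial} together with Proposition \ref{GOAndCoarseEquivalences} guarantees the construction is insensitive to this choice; alternatively one works throughout with the coarse category, where morphisms are taken up to closeness, and then $\bar f$ is the induced map on coronas/compactifications that is well defined up to the corresponding equivalence. I expect the only real obstacle to be purely bookkeeping: verifying that $\mathcal{A}_X$ genuinely separates points from closed sets so that $X \hookrightarrow CF(X)$ is an embedding (needed for the statement to be non-vacuous), and pinning down the precise functoriality convention for large scale continuous maps; the heart of the matter — that glacial oscillation is preserved under composition with coarse large scale continuous maps — is already done in Proposition \ref{GOAndCoarseEquivalences}.
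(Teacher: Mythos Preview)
Your approach is essentially the same as the paper's: use Proposition~\ref{GOAndCoarseEquivalences}(a) to see that $g\circ f$ is glacially oscillating for every continuous glacially oscillating $g:Y\to[0,1]$, invoke the universal property of the compactification induced by this function algebra, and obtain uniqueness from density of $X$ in $CF(X)$. The paper's proof is a two-sentence compression of exactly this argument.

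One remark: your parenthetical ``$f$ is large scale continuous hence continuous in the metric topology'' is false as stated (large scale continuity says nothing about small-scale behavior), but you immediately supply the correct workaround via a close continuous representative. The paper's own proof glosses over this point entirely, so your version is actually more careful here; just delete the false implication and keep the representative argument.
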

\begin{proof}
Given a continuous and glacially oscillating function $g:Y\to [0,1]$,
$g\circ f$ is also continuous and glacially oscillating by \ref{GOAndCoarseEquivalences}. Thus $f$ extends to a continuous function of coarse Freundenthal compactifications and that extension is unique as $X$ is dense in $CF(X)$.
\end{proof}

\begin{Proposition}\label{Corona0ImpliesGO}
If $\psi(X)$ is a coarse compactification of a proper metric space $X$ such that the corona $\psi(X)\setminus X$ is of dimension $0$, then for every continuous $f:\psi(X)\to R$ its restriction $f|X$ to $X$ is glacially oscillating.
\end{Proposition}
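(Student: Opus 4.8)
The plan is to reduce to the internal characterization of glacially oscillating functions (Proposition~\ref{MainCharOfGO}) and then build the required coarsely clopen cover out of the corona. Since $\psi(X)$ is compact, $f$ is bounded, so Proposition~\ref{MainCharOfGO} applies and it suffices to show that for every $\epsilon>0$ there are a bounded $K\subseteq X$ and finitely many coarsely clopen sets $U_1,\dots,U_p$ covering $X\setminus K$ with $\diam(f(U_i))<\epsilon$. I would produce these by slicing the corona $Z:=\psi(X)\setminus X$ into disjoint clopen pieces of small $f$-oscillation, thickening them to disjoint open subsets of $\psi(X)$, and intersecting with $X$.

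For the construction, note that $X$ is proper, hence locally compact, hence open in $\psi(X)$, so $Z$ is compact; being zero-dimensional and normal, $Z$ has the property that every finite open cover admits a refinement by finitely many pairwise disjoint clopen sets. Fixing $\epsilon>0$, cover $Z$ by the sets $Z\cap f^{-1}(J)$ with $J$ an open interval of length $<\epsilon$, pass to a finite disjoint clopen refinement $V_1,\dots,V_p$ (so $\diam(f(V_i))<\epsilon$ and each $V_i$ is compact), and use normality of $\psi(X)$ to pick mutually disjoint open $G_i\supseteq V_i$. Setting $W_i:=G_i\cap f^{-1}(J_i)$ for a suitable open interval $J_i$ of length $<\epsilon$ with $f(V_i)\subseteq J_i$ gives mutually disjoint open subsets $W_i$ of $\psi(X)$ with $V_i\subseteq W_i$, $\bigcup_i W_i\supseteq Z$, and $\diam(f(W_i))<\epsilon$. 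Then $K:=\psi(X)\setminus\bigcup_i W_i$ is closed in $\psi(X)$ and contained in $X$, hence a bounded subset of $X$, and $U_i:=W_i\cap X$ covers $X\setminus K$ with $\diam(f(U_i))<\epsilon$.

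The decisive step is checking that each $U_i$ is coarsely clopen. If not, $\chi_{U_i}$ is not slowly oscillating, so for some $r>0$ there are $x_n\in U_i$, $y_n\in X\setminus U_i$ with $d(x_n,y_n)<r$ and $x_n,y_n\to\infty$. Passing to a convergent subnet of $(x_n,y_n)$ in the compact space $\psi(X)\times\psi(X)$, with limits $\bar x$ and $\bar y$, both limits lie in $Z$ because the points leave every bounded set and $X$ is open in $\psi(X)$ with compact neighborhoods. For any continuous $g\colon\psi(X)\to[0,1]$ the restriction $g|X$ is slowly oscillating (this is exactly the defining property of a coarse compactification), so $|g(x_n)-g(y_n)|\to0$ and thus $g(\bar x)=g(\bar y)$; since continuous functions separate points of the compact Hausdorff space $\psi(X)$, we get $\bar x=\bar y=:\bar z\in Z$. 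Choosing $j$ with $\bar z\in W_j$, the two sequences eventually lie in the open set $W_j$, hence in $U_j$; taking $j=i$ contradicts $y_n\notin U_i$, while $j\neq i$ contradicts $x_n\in U_i$ together with $U_i\cap U_j=\emptyset$. Hence every $U_i$ is coarsely clopen, and Proposition~\ref{MainCharOfGO} gives that $f|X$ is glacially oscillating.

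I expect the third paragraph to be the main obstacle: the hypothesis that $\psi(X)$ is a \emph{coarse} compactification is what forces the two escaping sequences to share a single limit point in the corona, and the zero-dimensionality of the corona is what lets one refine an open cover of $Z$ to a disjoint clopen cover without enlarging $f$-oscillation; everything else is routine compact-Hausdorff bookkeeping.
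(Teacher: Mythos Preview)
Your argument is correct and follows the same overall strategy as the paper: partition the zero-dimensional corona into finitely many clopen pieces of small $f$-oscillation, thicken them to open subsets of $\psi(X)$ with the same small oscillation, observe that the traces on $X$ are coarsely clopen, and conclude. The differences are in the packaging of the last step. The paper finishes by invoking Corollary~\ref{OneGlacialScaleForMany} directly to manufacture a glacial scale from the finitely many coarsely clopen sets, whereas you route through criterion~(3) of Proposition~\ref{MainCharOfGO}; these are equivalent, since the implication $(3)\Rightarrow(1)$ of \ref{MainCharOfGO} is proved precisely via \ref{OneGlacialScaleForMany}. More substantively, the paper simply asserts ``notice each $U_i$ is coarsely clopen'' (implicitly appealing to Proposition~\ref{CClopenChar1} pushed down from $h(X)$ to $\psi(X)$), while you give a self-contained sequence/subnet argument that uses the defining property of a coarse compactification to force any two bounded-distance escaping sequences to share a single accumulation point in the corona. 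Your disjointness of the $W_i$ is what makes that argument clean; the paper does not insist on disjointness at this stage. Both approaches are short, but yours makes the role of the \emph{coarse} hypothesis on $\psi(X)$ more visible.
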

\begin{proof}
Given $\epsilon > 0$ consider mutually disjoint clopen subsets
$C_i$, $i\leq n$, covering $\psi(X)\setminus X$ such that
$\diam(f(C_i)) < \epsilon/2$ for each $i\leq n$. Extend each $C_i$ to an open subset $U_i$ of $\psi(X)$ such that $\diam(f(U_i)) < \epsilon$. Notice each $U_i$ is coarsely clopen, so by 
\ref{OneGlacialScaleForMany} there is a glacial scale $\mathcal{S}$ with the property
that any $\mathcal{S}$-chain starting at some $U_j$ is completely contained in that particular $U_j$. That proves $f|X$ is glacially oscillating
as we may assume each bounded subset used by $S$ contains
$\bigcup\limits_{i=1}^n (X\setminus U_i)$ which is a compact subset of $X$.
\end{proof}

\begin{Proposition}\label{GOImpliesCorona0}
Suppose $\psi(X)$ is a compactification of a proper metric space $X$. If for every continuous $f:\psi(X)\to R$ its restriction $f|X$ to $X$ is glacially oscillating, then the corona $\psi(X)\setminus X$ is of dimension $0$.
\end{Proposition}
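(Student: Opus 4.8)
The plan is to show directly that the corona $Y := \psi(X) \setminus X$ has covering dimension $0$ by verifying that every finite open cover of $Y$ admits a finite open refinement by mutually disjoint sets. Since $Y$ is compact Hausdorff, it suffices to separate points: given distinct $p, q \in Y$, I will produce a clopen subset of $Y$ containing $p$ but not $q$. (Equivalently, one can work with disjoint closed sets $A, B \subset Y$ and separate them by a clopen set; the point case is the same argument with $A = \{p\}$, $B = \{q\}$.)

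First I would use normality of $\psi(X)$ to pick a continuous function $F : \psi(X) \to [0,1]$ with $F(p) = 0$ and $F(q) = 1$, and more generally $F \equiv 0$ on a neighborhood of $A$ and $F \equiv 1$ on a neighborhood of $B$. By hypothesis $f := F|X$ is glacially oscillating. Now I invoke Proposition \ref{MainCharOfGO}, condition (3): for $\epsilon = 1/3$ there is a bounded set $K \subset X$ and finitely many coarsely clopen sets $U_1, \dots, U_p$ covering $X \setminus K$ with $\diam(f(U_i)) < 1/3$. Using Lemma \ref{AlgebraOfCCSets} I may assume the $U_i$ are mutually disjoint. Each $U_i$ has $f(U_i)$ contained in an interval of length $< 1/3$, so each $U_i$ lies entirely in $f^{-1}([0,1/3))$ or entirely in $f^{-1}((2/3,1])$ (or possibly in the overlap region, but at any rate it cannot meet both $\{f < 1/3\}$-forced and $\{f > 2/3\}$-forced behavior simultaneously). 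Let $V$ be the union of those $U_i$ whose image lies in $[0, 1/2)$, and $W$ the union of the rest; then $V, W$ are coarsely clopen, disjoint, cover $X \setminus K$, and $f < 1/2$ on $V$ while $f > 1/2$ on $W$ — hence $f(x) \to$ value near $0$ along $V$ and near $1$ along $W$ is not quite what we need, but the key point is that $V$ and $W$ are coarsely disjoint.

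Next I pass to the corona. By Proposition \ref{CClopenChar1}, since $V$ is coarsely clopen, $cl(V) \cap Y$ and $cl(X \setminus V) \cap Y$ are disjoint closed subsets of $Y$ (closures in $\psi(X)$); as $Y$ is compact Hausdorff and $cl(V)\cap Y$, $cl(W)\cap Y$ together cover $Y$ (because $X\setminus K$ is dense in a deleted neighborhood of $Y$ and $V\cup W \supset X\setminus K$), these two sets are in fact complementary clopen subsets of $Y$. It remains to check that $p \in cl(V)$ and $q \in cl(W)$: near $p$ the function $F$ is identically $0$, so points of $X$ approaching $p$ have $f$-value $0$ and thus (for points outside $K$) lie in those $U_i$ with small image, i.e.\ in $V$; hence $p \in cl(V) \cap Y$. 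Symmetrically $q \in cl(W) \cap Y$. This exhibits a clopen partition of $Y$ separating $p$ from $q$, so $\dim Y = 0$.

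The main obstacle I anticipate is the bookkeeping in the step that goes from "$\diam(f(U_i)) < \epsilon$ for each $i$" to a clean two-piece coarsely clopen partition $X\setminus K = V \sqcup W$ with $V$ hugging $\{f \approx 0\}$ and $W$ hugging $\{f \approx 1\}$: one must make sure no $U_i$ straddles the gap between the neighborhood of $A$ where $F = 0$ and the neighborhood of $B$ where $F = 1$, which is exactly why $\epsilon$ must be chosen smaller than the gap ($1/3 < 1/2$), and one must confirm that the limit points of $V$ and $W$ in $Y$ genuinely fall on the correct sides — this uses that $F$ is \emph{constant} (not merely small/large) on full neighborhoods of $A$ and $B$ in $\psi(X)$, so that all of $A$ ends up in $cl(V)$ and all of $B$ in $cl(W)$. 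Once that is set up, Proposition \ref{CClopenChar1} and compactness of $Y$ do the rest essentially for free.
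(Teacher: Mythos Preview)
Your approach is essentially the paper's: pick a Urysohn function $F$ on $\psi(X)$ separating the two closed sets, use glacial oscillation of $F|X$ to manufacture disjoint coarsely clopen subsets of $X$ that hug the two sides, and then invoke \ref{CClopenChar1} to conclude their traces on the corona are complementary clopen sets---the only difference is that the paper builds the two coarsely clopen pieces directly from a glacial scale (as the $\mathcal{S}$-saturations of $f^{-1}[0,1/4]$ and $f^{-1}(3/4,1]$) rather than by grouping the finite cover from \ref{MainCharOfGO}(3). One caution: for a non-metrizable compact Hausdorff corona, separating \emph{points} by clopen sets is strictly weaker than $\dim=0$, so you should run the argument for disjoint closed $A,B\subset Y$ (as you indicate parenthetically), not merely for $\{p\},\{q\}$.
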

\begin{proof}
Given two disjoint closed subsets $A_1$ and $A_2$ of $\psi(X)\setminus X$
choose a continuous function $f:\psi(X)\to [0,1]$ such that $f(A_1)\subset \{0\}$ and $f(A_2)\subset \{1\}$. Put $\epsilon = 1/4$
and choose a glacial scale $\mathcal{S}$ with the property that any $\mathcal{S}$-chain is mapped by $f$ to a subset of diameter less than $1/4$.
Put $U_1=f^{-1}[0,1/4]$ and $U_2=f^{-1}(3/4,1]$. Let
$V_i$, $i=1,2$, be the set of all points of $X$ than can be connected to $U_i$ by an $\mathcal{S}$-chain. Since we may assume each bounded set in the description of $S$ is compact, each $V_i$ is open in $X$.
Notice each $V_i$ is coarsely clopen (see \ref{CClopenChar2}) and $V_1\cap V_2=\emptyset$.
Therefore $C_i:=cl(V_i)\cap (\psi(X)\setminus X)$ is clopen in
$ \psi(X)\setminus X$ (see \ref{CClopenChar1}), it contains $A_i$, and $C_1\cap C_2=\emptyset$.
That proves $ \psi(X)\setminus X$ is of dimension $0$.
\end{proof}

\begin{Corollary}\label{CFIsCoronaOfDimZero}
The coarse Freundenthal compactification of a proper metric space $X$ is the maximal coarse compactification whose corona is of dimension $0$.
\end{Corollary}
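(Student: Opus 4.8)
The plan is to work through the standard dictionary between compactifications of a Tychonoff space $X$ and closed subalgebras of $C_b(X)$: a compactification $\alpha X$ is recorded by $C(\alpha X)|_X$, and $\alpha X$ dominates $\beta X$ exactly when $C(\beta X)|_X\subseteq C(\alpha X)|_X$. So the first task is to pin down $C(CF(X))|_X$. By definition $CF(X)$ is the largest compactification over which every continuous glacially oscillating $f\colon X\to[0,1]$ extends, so $C(CF(X))|_X$ is the closed subalgebra of $C_b(X)$ generated by these functions together with the constants. I would show it equals the set $A$ of all bounded, continuous, glacially oscillating functions on $X$. This reduces to checking that $A$ is a closed subalgebra; closure under scalars and under uniform limits is immediate from the definition, so the substantive point is closure under sums and products, and for that I would use the $M$-component characterization of Proposition \ref{MainCharOfGO}. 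Given $f,g\in A$ and $\epsilon,M>0$, choose bounded sets $K_f,K_g$ with $\diam f(U)<\epsilon/2$ for every $M$-component $U$ of $X\setminus K_f$ and $\diam g(U)<\epsilon/2$ for every $M$-component $U$ of $X\setminus K_g$. Every $M$-component of $X\setminus(K_f\cup K_g)$ is contained in an $M$-component of each of $X\setminus K_f$ and $X\setminus K_g$, so on such a set $f$ and $g$ each vary by less than $\epsilon/2$; hence $f+g$ varies by less than $\epsilon$ and $fg$ by less than $(\|f\|_\infty+\|g\|_\infty)\epsilon$ there. Thus $f+g,fg\in A$, and the rescaling $f\mapsto(f+\|f\|_\infty)/(2\|f\|_\infty)$ shows every element of $A$ sits inside the subalgebra generated by the $[0,1]$-valued members of $A$; therefore $C(CF(X))|_X=A$.

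With that identity secured, the next step is to see that $CF(X)$ is a coarse compactification whose corona is $0$-dimensional. Every glacially oscillating function is slowly oscillating — feed a glacial scale $\{(K_i,n_i)\}$ the length-two chains consisting of points $x,y\notin K_i$ with $d(x,y)\le n_i$ — so $A$ is contained in $C(h(X))|_X$, the algebra of bounded continuous slowly oscillating functions; hence $h(X)$ dominates $CF(X)$, i.e.\ $CF(X)$ is a coarse compactification. Furthermore every continuous $f\colon CF(X)\to\RR$ is bounded with $f|_X\in A$, so $f|_X$ is glacially oscillating, and Proposition \ref{GOImpliesCorona0} then yields $\dim\bigl(CF(X)\setminus X\bigr)=0$. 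So $CF(X)$ belongs to the class of coarse compactifications with $0$-dimensional corona.

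It remains to prove maximality in that class. Let $\psi(X)$ be any coarse compactification of $X$ with $\dim(\psi(X)\setminus X)=0$. By Proposition \ref{Corona0ImpliesGO}, every continuous $f\colon\psi(X)\to\RR$ restricts to a glacially oscillating function on $X$, so $C(\psi(X))|_X\subseteq A=C(CF(X))|_X$. By the dictionary between closed subalgebras and compactifications this says precisely that $CF(X)$ dominates $\psi(X)$, which is the assertion that $CF(X)$ is the maximal member of the class.

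The only ingredient here that is not formal manipulation is the first step, namely the stability of glacial oscillation under sums and products; the definition of glacial oscillation quantifies over glacial scales in a way that does not visibly respect algebraic operations, and it is the $M$-component reformulation of Proposition \ref{MainCharOfGO} that turns this into a routine refinement argument. Once $C(CF(X))|_X=A$ is established, the two equivalences \ref{Corona0ImpliesGO} and \ref{GOImpliesCorona0} deliver the corollary immediately.
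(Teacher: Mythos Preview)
Your proof is correct and follows the same strategy as the paper: apply Proposition \ref{GOImpliesCorona0} to see the corona of $CF(X)$ has dimension $0$, and Proposition \ref{Corona0ImpliesGO} for maximality. The paper's argument is a two-line citation of those propositions; your additional work identifying $C(CF(X))|_X$ with the bounded continuous glacially oscillating functions (by checking closure under sums and products via the $M$-component criterion of Proposition \ref{MainCharOfGO}) fills in a step the paper leaves implicit but which is genuinely needed before Proposition \ref{GOImpliesCorona0} can be invoked for $CF(X)$.
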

\begin{proof}
By \ref{GOImpliesCorona0} the corona of the Freundenthal compactification is of dimension $0$. By \ref{Corona0ImpliesGO} any coarse compactification whose corona is of dimension $0$ is dominated by the Freundenthal compactification.
\end{proof}

\begin{Proposition}\label{CFAreHomeo}
If two proper metric spaces $(X,d_X)$ and $(Y,d_Y)$ are coarsely equivalent, then their Freundenthal coronas $CF(X)\setminus X$ and $CF(Y)\setminus Y$ are homeomorphic.
\end{Proposition}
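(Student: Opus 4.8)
The plan is to deduce this from the functorial behaviour of the coarse Freundenthal compactification established in Proposition \ref{InducedMapBetweenCoarseFreundenthal}, together with the fact that the corona is a topological invariant of the compactification. First I would take a coarse equivalence $f:X\to Y$ and a coarse inverse $f':Y\to X$, both of which we may assume are large scale continuous (replacing them by close maps if necessary, which does not affect any of the extension properties by Proposition \ref{CloseGlacial} and Proposition \ref{GOAndCoarseEquivalences}). By Proposition \ref{InducedMapBetweenCoarseFreundenthal}, $f$ extends uniquely to a continuous map $CF(f):CF(X)\to CF(Y)$ and $f'$ extends uniquely to a continuous map $CF(f'):CF(Y)\to CF(X)$.

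The next step is to observe that uniqueness of extensions forces $CF(f')\circ CF(f)$ and $CF(f)\circ CF(f')$ to be the identity maps on $CF(X)$ and $CF(Y)$ respectively. Indeed, $CF(f')\circ CF(f)$ is a continuous self-map of $CF(X)$ extending $f'\circ f:X\to X$; since $f'\circ f$ is close to $id_X$, it induces the same extension as $id_X$ does (a close map and $id_X$ agree on the corona because their difference is bounded, hence they have the same continuous extension to any coarse compactification, in particular to $CF(X)$ — this is the content of the closeness arguments already used, e.g. in the proof of Proposition \ref{InducedMapBetweenCoarseFreundenthal} combined with Proposition \ref{CloseGlacial}). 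Hence $CF(f')\circ CF(f)=id_{CF(X)}$ and symmetrically $CF(f)\circ CF(f')=id_{CF(Y)}$, so $CF(f)$ is a homeomorphism. Restricting $CF(f)$ to the corona $CF(X)\setminus X$: since $f$ is coarse, it does not send unbounded parts of $X$ into bounded parts of $Y$, so the extension $CF(f)$ maps $CF(X)\setminus X$ into $CF(Y)\setminus Y$, and likewise for $CF(f')$; therefore $CF(f)$ restricts to a homeomorphism $CF(X)\setminus X\to CF(Y)\setminus Y$.

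The main obstacle I anticipate is making the "close maps induce the same extension" step fully rigorous at the level of $CF$, rather than merely at the level of the Higson compactification where it is classical. The cleanest route is to note that for any continuous glacially oscillating $g:Y\to[0,1]$ and any map $f'\circ f$ close to $id_X$, the functions $g\circ(f'\circ f)$ and $g$ — wait, more precisely one wants: if $\alpha,\beta:X\to X$ are close, then for every continuous glacially oscillating $h:X\to[0,1]$ the compositions $h\circ\alpha$ and $h\circ\beta$ have extensions to $CF(X)$ that agree on the corona; equivalently the two extensions of $\alpha$ and $\beta$ to $CF(X)$ agree. This can be extracted from Proposition \ref{CloseGlacial}: $h\circ\alpha$ glacially oscillating forces $h$ glacially oscillating and conversely, and a direct $\mathcal{S}$-chain argument shows $|h(\alpha(x))-h(\beta(x))|\to 0$ as $x\to\infty$, which is exactly what is needed for the two continuous extensions to coincide on $CF(X)\setminus X$. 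Once that lemma is in hand the rest is the formal two-out-of-three argument above. I would state this closeness fact explicitly as a short lemma before giving the proof, since it is the only non-formal ingredient and it is reused implicitly.
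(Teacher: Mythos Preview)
Your functorial outline is the right idea, and in fact it is exactly the paper's Case~1. The gap is a continuity issue that you skate over. Proposition~\ref{InducedMapBetweenCoarseFreundenthal} as used here produces a continuous extension $CF(f):CF(X)\to CF(Y)$; since $X$ is open and dense in $CF(X)$, any such extension must restrict to a \emph{topologically continuous} map $f:X\to Y$. A coarse equivalence between general proper metric spaces need not be continuous, and there is no general procedure to replace it by a close continuous map. Your parenthetical ``replacing them by close maps if necessary'' addresses large scale continuity (which a coarse equivalence already has) but not topological continuity, which is what is actually needed for $g\circ f$ to be continuous and hence extendable over $CF(X)$.

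The paper handles this by a case split. Case~1 takes both $X$ and $Y$ discrete, so every map is continuous and your argument goes through verbatim (with the correction that $CF(f')\circ CF(f)$ equals the identity only on the corona, not on all of $CF(X)$, since $f'\circ f$ is merely close to $id_X$). Case~2 treats a discrete $X$ sitting inside a non-discrete $Y$ as a coarsely equivalent subset; here the crucial extra input is an extension theorem (from \cite{DW}, with an earlier version in \cite{DM}) showing that every glacially oscillating $k:X\to[0,1]$ extends to a \emph{continuous} slowly oscillating $k':Y\to[0,1]$, which is then glacially oscillating by Proposition~\ref{CloseGlacial}. This forces $cl(X)$ in $CF(Y)$ to be $CF(X)$, so the coronas coincide. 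The general case then reduces to these two by passing through discrete coarsely dense subsets of $X$ and $Y$. Your proposed lemma that close maps induce the same map on the corona is correct and is essentially what the paper invokes via the Higson compactification in Case~1, but it does not by itself bridge the continuity gap for non-discrete spaces.
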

\begin{proof}
Choose coarse equivalences $f:X\to Y$ and $g:Y\to X$ such that $g\circ f$ is close to $id_X$ and $f\circ g$ is close to $id_Y$.\\
\textbf{Case 1:} Both $X$ and $Y$ are discrete as topological spaces.
Using \ref{InducedMapBetweenCoarseFreundenthal} one can see that $f, g$ induce continuous function $CF(f):CF(X)\to CF(Y)$, $CF(g):CF(Y)\to CF(X)$.
It is known that the induced functions $h(f):h(X)\to h(Y)$,
$h(g):h(Y)\to h(X)$ have the property that $h(g)\circ h(f)$ is the identity
on $h(X)\setminus X$ and $h(f)\circ h(g)$ is the identity
on $h(Y)\setminus Y$. Since coarse Freundenthal compactifications are dominated by Higson compactifications, $CF(g)\circ CF(f)$ is the identity
on $CF(X)\setminus X$ and $CF(f)\circ CF(g)$ is the identity
on $CF(Y)\setminus Y$. \\
\textbf{Case 2:} $X$ is a discrete subset of $Y$ and $i:X\to Y$ is a coarse equivalence. Given a glacially oscillating function $k:X\to [0,1]$, it extends to a slowly oscillating and continuous function $k':Y\to [0,1]$ by \cite{DW} (see 
\cite{DM} for an earlier version of that result not involving continuity).
$k'$ being close to a glacially oscillating function (namely $k\circ r$, where $r:Y\to X$ is a coarse inverse of $i$) is itself glacially oscillating by \ref{CloseGlacial}. That means the closure $cl(X)$ of $X$ in $CF(Y)$ equals $CF(X)$ and $CF(X)\setminus X=CF(Y)\setminus Y$.\\
\textbf{General Case:} Choose discrete subsets $X'$ of $X$ and $Y'$ of $Y$ such that inclusions $X'\to X$ and $Y'\to Y$ are coarse equivalences. Apply Case 2 and then Case 1.
\end{proof}

\begin{Corollary}\label{FEqualsCF}
If $(X,d)$ is a proper geodesic space, then its Freundenthal compactification is the coarse Freundenthal compactification.
\end{Corollary}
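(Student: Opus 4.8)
The plan is to show that for a proper geodesic space $X$ the Freundenthal compactification $\bar X$ and the coarse Freundenthal compactification $CF(X)$ coincide by verifying that the two families of functions used to define them induce the same compactification. Recall that $\bar X$ is induced by continuous functions $f:X\to[0,1]$ that extend to $\bar X$, and by the classical characterization these are exactly the continuous functions that are ``constant at infinity on ends'', i.e. for each $\epsilon>0$ there is a compact $K$ with $\diam(f(C))<\epsilon$ for every component $C$ of $X\setminus K$. On the other hand $CF(X)$ is induced by the continuous glacially oscillating functions $f:X\to[0,1]$. So the statement reduces to: \emph{a continuous function on a proper geodesic space is glacially oscillating if and only if it satisfies the Freundenthal ``small on complementary components'' condition.} But this equivalence is precisely Proposition \ref{CharOfGOForGeodesic}. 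Hence the two defining function families agree, and therefore the induced compactifications agree.

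More carefully, first I would invoke Corollary \ref{FEqualsCF}'s companion results: by the Corollary following Theorem \ref{CoarseFreundenthal}, since $X$ is proper geodesic, its Freundenthal compactification $\bar X$ is a coarse compactification of $X$, and by Proposition \ref{GOImpliesCorona0} (or directly, since $\dim(\bar X\setminus X)=0$) combined with Proposition \ref{Corona0ImpliesGO}, every continuous $f:\bar X\to\RR$ restricts to a glacially oscillating function on $X$. This shows $\bar X$ is a coarse compactification with $0$-dimensional corona, so by Corollary \ref{CFIsCoronaOfDimZero} it is dominated by $CF(X)$: there is a continuous surjection $CF(X)\to\bar X$ fixing $X$.

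For the reverse domination I would show every continuous glacially oscillating $f:X\to[0,1]$ extends over $\bar X$. By Proposition \ref{CharOfGOForGeodesic}, glacial oscillation of $f$ on the geodesic space $X$ is equivalent to: for every $\epsilon>0$ there is a bounded (hence, by properness, relatively compact) $K$ with $\diam(f(C))\le\epsilon$ for each component $C$ of $X\setminus K$. This is exactly the condition guaranteeing that $f$ extends continuously over the Freundenthal compactification: one defines the value at a Freundenthal end $\{U_i\}$ as $\lim_i \overline{f(U_i)}$ (a decreasing nested sequence of intervals of diameter $\to 0$, hence a single point), and the basis description of the topology on $X\cup Ends(X)$ makes this extension continuous. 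Therefore $CF(X)$, being the maximal compactification over which all continuous glacially oscillating functions extend, is dominated by $\bar X$. Combining the two dominations, $CF(X)=\bar X$.

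The main obstacle, such as it is, is purely bookkeeping: one must be careful that ``bounded'' and ``relatively compact'' coincide here (true because $X$ is proper), and that the Freundenthal ends as defined via an exhaustion $\{K_i\}$ by compacta really do detect the components appearing in Proposition \ref{CharOfGOForGeodesic} — i.e. that the extension of $f$ across ends is well-defined and continuous. This is standard Freundenthal-compactification theory, referenced in the introduction; the only genuine content is the identification, via Proposition \ref{CharOfGOForGeodesic}, of the glacially oscillating functions with the Freundenthal-extendable ones, and that has already been done. So the proof is essentially a two-line assembly: $\bar X$ is a coarse compactification with $0$-dimensional corona (Corollary after Theorem \ref{CoarseFreundenthal} plus Corollary \ref{CFIsCoronaOfDimZero}), and conversely every continuous glacially oscillating function extends over $\bar X$ (Proposition \ref{CharOfGOForGeodesic} plus classical Freundenthal theory), so the two compactifications are equal.
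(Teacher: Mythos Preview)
Your proposal is correct and takes essentially the same approach as the paper: the paper's entire proof is the single line ``Apply \ref{CharOfGOForGeodesic}'', which is precisely your core observation that on a proper geodesic space the continuous glacially oscillating functions coincide with the continuous functions that are $\epsilon$-small on complementary components, hence the two compactifications are induced by the same family of functions. Your version simply unpacks both dominations explicitly (via Corollary \ref{CFIsCoronaOfDimZero} in one direction and classical Freundenthal extendability in the other), whereas the paper leaves all of that implicit.
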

\begin{proof}
Apply \ref{CharOfGOForGeodesic}.
\end{proof}

\section{Ends of groups}
In this section we show that the number of elements of the coarse Freundenthal corona of countable groups generalizes the number of ends of finitely generated groups.

Given a countable group $G$ one considers all proper metrics $d$ on $G$ that are left-invariant (that means $d(g\cdot h_1,g\cdot h_1)=d(h_1,h_2)$
for all $g,h_1,h_2\in G$). It turns out $id:(G,d_1)\to (G,d_2)$ is always a coarse equivalence for any such metrics $d_1$ and $d_2$. Be aware that considering right-invariant metrics $d_2$ (while keeping $d_1$ left-invariant) may lead to $id:G\to G$ not being a coarse equivalence (see \cite{BDM}).

If $G$ is finitely generated, then any word metric will do.

\begin{Example}\label{LeftInvariantMetric}
One way to introduce a proper left-invariant metric $d$ on a countable group $G$ that is not finitely generated is as follows: \\
1. Choose generators $g_i$, $i\ge 1$, of $G$ such that $g_{n+1}$ does not belong to the subgroup of $G$ generated by $g_1,\ldots,g_n$.\\
2. Choose a strictly increasing sequence of positive integers $n_i$,\\
3. Assign each $g_i^a$, where $a=\pm 1$, the norm $n_i$ and put $|1_G|=0$,\\
4. Assign to each $g\in G$ the norm minimizing the sum of norms
of $g_i^a$ appearing in all possible expressions of $g$ as the product of generators,\\
5. Define $d(g,h)$ as the norm of $g^{-1}\cdot h$.
\end{Example}

\begin{Observation}\label{AlmostInvariantObservation}
A subset $A$ of a countable group $G$ is coarsely clopen if and only if it is almost invariant.
\end{Observation}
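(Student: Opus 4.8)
The plan is to unwind the definition of ``almost invariant'' and match it against the characterization of coarsely clopen sets in terms of glacial scales (Proposition \ref{CClopenChar2}), or more directly against the definition of a slowly oscillating characteristic function. Recall that $A\subseteq G$ is \emph{almost invariant} if for every $g\in G$ the symmetric difference $gA\triangle A$ is finite (equivalently, $gA\triangle A$ is bounded, since the metric is proper). Since the metric $d$ on $G$ is left-invariant, the map $x\mapsto gx$ is an isometry, so translation by a group element is the natural source of ``moving a bounded distance.'' The key point connecting the two notions is that a generating set, together with its norms, controls all bounded motions.

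First I would prove the forward direction: if $A$ is coarsely clopen, then $\chi_A$ is slowly oscillating, so for every $r>0$ there is a bounded set $K$ with the property that $x,y\notin K$ and $d(x,y)<r$ imply $\chi_A(x)=\chi_A(y)$. Given $g\in G$, set $r:=|g|+1$ (using the norm from the chosen left-invariant metric). For any $x$ with $x\notin K$ and $gx\notin K$ we have $d(x,gx)=|x^{-1}gx|$, which need not be small --- so a little care is needed here. Instead I would use $d(gx,x)$: writing $y=gx$, we have $d(y,x)\le \ldots$; actually the clean statement is $d(gx, x) = |x^{-1} g x|$, which is \emph{not} bounded in general, so the naive approach fails and one must instead exploit that $x \mapsto gx$ sends $A$ close to $A$ in the coarse sense. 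The correct route: observe that $g A$ and $A$ differ by a set on which $\chi_A - \chi_{gA}$ is the difference of two slowly oscillating functions (since $\chi_{gA}(x) = \chi_A(g^{-1}x)$ and left-translation is a coarse equivalence close to $\mathrm{id}_G$, so $\chi_A\circ(g^{-1}\cdot)$ is slowly oscillating and close to $\chi_A$), hence slowly oscillating; a slowly oscillating characteristic function with bounded support must have \emph{bounded} support, and $g A\triangle A$ being the support of $|\chi_A-\chi_{gA}|$ forces $gA\triangle A$ to be bounded, i.e.\ finite.

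For the converse, suppose $A$ is almost invariant. I would show $\chi_A$ is slowly oscillating directly. Fix $r,\epsilon>0$; it suffices to handle $\epsilon<1$, so we must find bounded $K$ with $x,y\notin K$, $d(x,y)<r$ implying $x\in A\iff y\in A$. The elements $h$ with $|h|\le r$ form a finite set $F=\{h_1,\dots,h_k\}$ (properness). Each $A\triangle h_j A$ is finite; let $K$ be a bounded set containing $\bigcup_j (A\triangle h_jA)$ together with a large ball. If $d(x,y)<r$ then $y=x h$ for... no: left-invariance gives $d(x,y)=|x^{-1}y|$, so $x^{-1}y=:h\in F$, i.e.\ $y=xh$. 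Hmm, but almost invariance is about \emph{left} translates $gA$, and here $y$ is a \emph{right} translate of $x$. This mismatch is exactly the main obstacle, and it is why the paper's Observation is nontrivial: one has to pass through the fact (asserted just before this Observation) that left-invariant metrics on a countable group are all coarsely equivalent, and that in the coarse category the left and right regular actions interact correctly. Concretely, I expect the resolution to use that $d(x,xh)=|h|$ says $x$ and $xh$ are uniformly close \emph{for all} $x$ simultaneously, so right-multiplication by $h$ is a coarse equivalence close to the identity; then $\chi_A$ slowly oscillating is equivalent to $\chi_A$ being close to all its right-translates, and one reduces to finitely many right-translates $A h_j^{-1}$, showing each $A\triangle Ah_j^{-1}$ is bounded. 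Proving $A\triangle Ah$ bounded from $A\triangle gA$ finite is the crux: this follows because $A h^{-1} \triangle A = h^{-1}\cdot(h A h^{-1} \triangle h A)$... again conjugation intrudes. I would therefore instead argue that \emph{left} almost-invariance already implies $\chi_A$ is slowly oscillating by a covering/Cayley-graph-style argument: cover $G$ by cosets, use that a large ball is eventually contained in the ``stable'' part where finitely many defining symmetric differences have been exhausted. The honest hard step, to be spelled out carefully, is reconciling the one-sided nature of ``almost invariant'' with the (left-invariant but two-sidedly-flavored) notion of ``slowly oscillating''; I anticipate invoking that all proper left-invariant metrics are coarsely equivalent to settle it.
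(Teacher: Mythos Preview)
Your entire difficulty stems from using the wrong convention for ``almost invariant.'' The paper defines it (inside the proof of this very Observation) via \emph{right} translates: $A$ is almost invariant if $A\triangle (A\cdot g)$ is finite for every $g\in G$. You instead work with left translates $gA\triangle A$, and this is precisely what makes your argument collapse. With a left-invariant metric one has $d(x,xg)=d(1_G,g)=|g|$, so right multiplication by a fixed $g$ moves every point the \emph{same} bounded distance; by contrast $d(x,gx)=|x^{-1}gx|$, which you correctly note is unbounded in general. Thus right translation by $g$ is close to $\mathrm{id}_G$, while left translation is not --- and your claim that ``left-translation is a coarse equivalence close to $\mathrm{id}_G$'' is simply false, contradicting your own earlier computation.

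Once the correct (right-translate) definition is in place, both directions are short and direct, as in the paper. For coarsely clopen $\Rightarrow$ almost invariant: if $A\triangle(A\cdot g)$ were infinite, extract $x_n\in A$ with $x_n\to\infty$ and $y_n:=x_n g^{\pm1}\notin A$; then $d(x_n,y_n)=|g|$ is constant, contradicting slow oscillation of $\chi_A$. For the converse: if $\chi_A$ fails to be slowly oscillating, find $x_n\in A$, $y_n\notin A$, $x_n\to\infty$, $d(x_n,y_n)<M$; then $x_n^{-1}y_n$ lies in the finite ball $B(1_G,M)$, so by pigeonhole $x_n^{-1}y_n=g$ for some fixed $g$ and infinitely many $n$, giving $A\triangle(A\cdot g)$ infinite. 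There is no need for glacial scales, no conjugation issue, and no appeal to coarse equivalence of different metrics; the mismatch you struggle to reconcile is an artifact of the wrong definition.
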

\begin{proof}
Recall that $A$ is \textbf{almost invariant} if for every $g\in G$ the symmetric difference $A\Delta (A\cdot g)$ is finite.

Put a proper left-invariant metric $d$ on $G$.
Suppose $A$ is coarsely clopen but not almost invariant.
Choose $g\in G$ such that $A\Delta (A\cdot g)$ is not finite. Either there is a sequence $x_n$ in $A$ diverging to infinity such that $x_n\cdot g\notin A$ for all $n\ge 1$ or there is a sequence $x_n$ in $A$ diverging to infinity such that $x_n\cdot g^{-1}\notin A$ for all $n\ge 1$. Put $y_n=x_n\cdot g$ in the first case and $y_n=x_n\cdot g^{-1}$ in the second case. Observe $d(x_n,y_n)=d(1_G,g)$ for all $n\ge 1$ contradicting $A$ being coarsely clopen.

Suppose $A$ is almost invariant but not coarsely clopen. There is a sequence of points $x_n$ in $A$ diverging to infinity and $M > 0$ such that for some sequence $y_n\in G\setminus A$, $d(x_n,y_n) < M$ for all $n\ge 1$.
Therefore $x_n^{-1}\cdot y_n\in B(1_G,M)$ for all $n\ge 1$. Since $B(1_G,M)$ is finite, we may assume, without loss of generality, that there is $g\in B(1_G,M)$ such that $ x_n^{-1}\cdot y_n=g$ for all $n\ge 1$.
Hence $x_n\cdot g=y_n$ for all $n\ge 1$ and $A\Delta (A\cdot g)$ is not finite, a contradiction.
\end{proof}

\begin{Definition}
The \textbf{number of ends} of a countable group $G$ is the cardinality of the Freundenthal corona $CF(G)\setminus G$, where $G$ is equipped with a proper left-invariant metric.
\end{Definition}

Our next result shows that the above definition does generalize the classic definition of the number of ends for finitely generated groups.
\begin{Proposition}
If $G$ is finitely generated, then its number of ends equals the cardinality of the Freundenthal corona of any Cayley graph of $G$.
\end{Proposition}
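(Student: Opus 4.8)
The plan is to recognize this as an assembly of results already in hand, with essentially no new work required. First I would fix a finite generating set $S$ of $G$, let $d_S$ denote the associated word metric on $G$, and let $\Gamma = \Gamma(G,S)$ be the Cayley graph equipped with the path metric in which each edge has length $1$. Since $S$ is finite, every ball in $(G,d_S)$ is finite, so $(G,d_S)$ is a proper metric space; it is manifestly left-invariant, hence a legitimate choice of proper left-invariant metric for computing the number of ends of $G$ as the cardinality of $CF(G)\setminus G$. By the remark preceding Example \ref{LeftInvariantMetric} together with Proposition \ref{CFAreHomeo}, the homeomorphism type — in particular the cardinality — of $CF(G)\setminus G$ is independent of the chosen proper left-invariant metric, so it suffices to work with $d_S$. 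Likewise $\Gamma$ is a proper, connected, locally connected geodesic space.

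Next I would observe that the vertex inclusion $\iota\colon (G,d_S)\to \Gamma$ is a coarse equivalence: it is an isometric embedding onto the vertex set, which is coarsely dense in $\Gamma$, and a coarse inverse $r\colon\Gamma\to G$ is obtained by sending each point of $\Gamma$ to a nearest vertex. Applying Proposition \ref{CFAreHomeo} to this coarse equivalence yields a homeomorphism $CF(G)\setminus G \cong CF(\Gamma)\setminus \Gamma$; in particular the two coronas have the same cardinality.

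Then I would invoke Corollary \ref{FEqualsCF}: because $\Gamma$ is a proper geodesic space, its coarse Freundenthal compactification $CF(\Gamma)$ coincides with its classical Freundenthal compactification, so $CF(\Gamma)\setminus\Gamma$ is precisely the classical end space $Ends(\Gamma)$ of the Cayley graph. Combining the two identifications,
$$|CF(G)\setminus G| = |CF(\Gamma)\setminus\Gamma| = |Ends(\Gamma)|,$$
which is exactly the asserted equality for this Cayley graph. To cover the phrase ``any Cayley graph'', I would finally note that a second finite generating set $S'$ produces a Cayley graph $\Gamma'$ that is again coarsely equivalent to $(G,d_S)$ — since $id\colon(G,d_S)\to(G,d_{S'})$ is a coarse equivalence and each of $\Gamma,\Gamma'$ is coarsely equivalent to the corresponding vertex set — so one more application of Proposition \ref{CFAreHomeo} gives $|Ends(\Gamma)| = |Ends(\Gamma')|$.

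I do not expect a genuine obstacle: the only points needing care are the verification that $(G,d_S)$ is a proper left-invariant metric space and that the vertex inclusion into the Cayley graph is a coarse equivalence — both standard — after which Proposition \ref{CFAreHomeo} and Corollary \ref{FEqualsCF} do all the work. If anything is delicate it is confirming that the \emph{cardinality}, and not merely the homeomorphism type, is what transports; but since the coronas are literally homeomorphic at each step this is immediate.
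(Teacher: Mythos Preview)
Your proposal is correct and follows essentially the same route as the paper: equip $G$ with a word metric, note that the vertex inclusion into the Cayley graph is a coarse equivalence, apply Proposition~\ref{CFAreHomeo} to identify the coarse Freundenthal coronas, and then invoke Corollary~\ref{FEqualsCF} to identify $CF(\Gamma)\setminus\Gamma$ with the classical Freundenthal corona. You are simply more explicit than the paper about the independence from the choice of proper left-invariant metric and about the phrase ``any Cayley graph'', but the argument is the same.
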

\begin{proof}
Equip $G$ with a word metric based on a symmetric set of generators $S$. Notice the inclusion $G\to \Sigma(G,S)$ from $G$ to the Cayley graph
is a coarse equivalence, hence $CF(G)\setminus G$ is homeomorphic
to the coarse Freundenthal corona of $ \Sigma(G,S)$
(see \ref{CFAreHomeo}) and that corona is equal to the ends of $G$ by 
\ref{FEqualsCF}.
\end{proof}

Here is another way to introduce the number of ends of countable groups:
\begin{Proposition}
Let $G$ be a countable group. \\
1. $G$ has $0$ ends if it is finite.\\
2. $G$ has one end if every almost invariant subset $A$ of $G$ is either finite or it complement is finite.\\
3. The number of ends of $G$ is the supremum of $n\ge 0$ such that there
are $n$ mutually disjoint non-finite almost invariant subsets of $G$.
\end{Proposition}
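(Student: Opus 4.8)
\section*{Proof proposal}

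The plan is to reduce all three statements to facts about the corona $\Omega:=CF(G)\setminus G$ and the Boolean algebra of its clopen subsets. Fix a proper left-invariant metric $d$ on $G$; as noted before the definition, any two such metrics are coarsely equivalent, so by \ref{CFAreHomeo} the cardinality $|\Omega|$ does not depend on $d$, and for a group ``bounded'' is the same as ``finite''. By Observation \ref{AlmostInvariantObservation} the almost invariant subsets of $G$ are exactly the coarsely clopen ones, so throughout I will work with coarsely clopen subsets and translate back at the end.

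The heart of the argument is a dictionary: the assignment $A\mapsto\widehat A:=\operatorname{cl}_{CF(G)}(A)\cap\Omega$ is a surjective Boolean homomorphism from the coarsely clopen subsets of $G$ onto the clopen subsets of $\Omega$, with $\widehat A=\emptyset$ iff $A$ is finite and $\widehat A=\Omega$ iff $G\setminus A$ is finite, and with disjoint coarsely clopen sets having disjoint images. To set this up I would first note that if $A$ is coarsely clopen then $\chi_A$ is slowly oscillating with finite image, hence glacially oscillating (by the Corollary asserting that a finite-image slowly oscillating function is glacially oscillating), hence extends to a continuous $\{0,1\}$-valued function on $CF(G)$. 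With this in hand the proof of \ref{CClopenChar1} goes through verbatim for $CF(G)$ in place of $h(X)$, giving that $\widehat A$ is clopen; the Boolean identities $\widehat{A\cap C}=\widehat A\cap\widehat C$, $\widehat{G\setminus A}=\Omega\setminus\widehat A$ follow from $\chi_{A\cap C}=\chi_A\chi_C$ and $\chi_{G\setminus A}=1-\chi_A$; and disjoint coarsely clopen $A,B$ satisfy $B\subseteq G\setminus A$, so $\widehat B\subseteq\Omega\setminus\widehat A$. Surjectivity is the one substantive point: given a clopen $D\subseteq\Omega$, normality of $CF(G)$ yields a continuous $g\colon CF(G)\to[0,1]$ with $g(D)=\{0\}$ and $g(\Omega\setminus D)=\{1\}$; since $CF(G)$ is a coarse compactification with zero-dimensional corona (\ref{CFIsCoronaOfDimZero}), $g|G$ is glacially oscillating by \ref{Corona0ImpliesGO}, so \ref{MainCharOfGO}(3) supplies a bounded $K$ and pairwise disjoint coarsely clopen $U_1,\dots,U_p$ covering $G\setminus K$ with $\diam g(U_i)<1/4$; letting $A$ be the union of those $U_i$ with $\sup g(U_i)<3/4$, one checks $\widehat A=D$ using that every point of $\Omega$ lies in $\operatorname{cl}(U_i)$ for some $i$ (it lies in $\operatorname{cl}(G\setminus K)=\bigcup_i\operatorname{cl}(U_i)$) together with $\diam g(U_i)<1/4$.

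Granting the dictionary, the three claims follow quickly. For (1): if $G$ is finite then $(G,d)$ is a compact space, so $CF(G)=G$ and $\Omega=\emptyset$, giving $0$ ends; conversely, if $G$ is infinite then $(G,d)$ is noncompact and $\Omega\neq\emptyset$, a fact used in (2). For (2): assume $G$ infinite, so $\Omega\neq\emptyset$; the hypothesis says every almost invariant $A$ is finite or cofinite, which by the dictionary says every clopen $D\subseteq\Omega$ equals $\emptyset$ or $\Omega$, i.e. $\Omega$ is connected, and a nonempty connected compact Hausdorff space of dimension $0$ is a single point, so $|\Omega|=1$. For (3): if $A_1,\dots,A_n$ are pairwise disjoint non-finite almost invariant sets, then $\widehat{A_1},\dots,\widehat{A_n}$ are pairwise disjoint nonempty clopen subsets of $\Omega$, so $|\Omega|\geq n$; conversely, if $|\Omega|\geq n$, choose distinct $p_1,\dots,p_n\in\Omega$ and, using that $\Omega$ is zero-dimensional compact Hausdorff, pairwise disjoint clopen $D_1,\dots,D_n$ with $p_i\in D_i$, pull them back to coarsely clopen $A_i$ with $\widehat{A_i}=D_i$ (necessarily infinite), and replace each $A_i$ by $A_i\setminus\bigcup_{j\neq i}A_j$, which is again coarsely clopen by Lemma \ref{AlgebraOfCCSets} and still infinite since the $A_i$ are pairwise almost disjoint ($\widehat{A_i\cap A_j}=\widehat{A_i}\cap\widehat{A_j}=\emptyset$). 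Taking suprema, and noting $n=0$ is always attained, identifies $|\Omega|$ with the stated supremum (read as $\infty$ when $\Omega$ is infinite).

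The main obstacle is the dictionary, and inside it the surjectivity of $A\mapsto\widehat A$: one must manufacture an almost invariant subset of $G$ realizing a prescribed clopen piece of the corona out of nothing but a separating function, and this is precisely where \ref{MainCharOfGO} and the coarse-compactification property of $CF(G)$ do the real work. The supporting technical step is to confirm that \ref{CClopenChar1} and its proof survive the passage from $h(X)$ to $CF(X)$, which rests entirely on the observation that a slowly oscillating function with finite image is glacially oscillating; everything past the dictionary is routine Boolean bookkeeping together with standard facts about zero-dimensional compacta.
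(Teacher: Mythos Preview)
Your proof is correct and, like the paper's, rests on the correspondence between coarsely clopen subsets of $G$ and clopen subsets of the corona $\Omega=CF(G)\setminus G$; but you develop this correspondence much more explicitly and via a different mechanism. The paper argues only the two implications needed for (3): given $n$ points in $\Omega$ it separates them by disjoint clopen sets $C_i$, extends these to mutually disjoint open sets $U_i$ in $CF(G)$ with $C_i=\operatorname{cl}(U_i)\cap\Omega$, and then invokes Proposition~\ref{CClopenChar1} directly to conclude that each $U_i\cap G$ is coarsely clopen (hence almost invariant); the converse is stated in one line. You instead build the full Boolean homomorphism $A\mapsto\widehat A$ and prove its surjectivity by passing through \ref{Corona0ImpliesGO} and \ref{MainCharOfGO}(3), manufacturing the desired coarsely clopen set from a separating function rather than from an open extension in the compactification. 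Your route is longer but yields a clean structural statement (the clopen algebra of $\Omega$ is the quotient of the almost-invariant algebra by finite sets), from which (1)--(3) drop out uniformly; the paper's route is shorter because the open-set extension plus \ref{CClopenChar1} replaces your entire surjectivity argument in one stroke. Both are sound; note, though, that since the paper simply cites \ref{CClopenChar1} without relocating it to $CF(G)$, your explicit observation that $\chi_A$ extends continuously to $CF(G)$ (being glacially oscillating with finite image) is in fact a cleaner justification of that step than what the paper writes.
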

\begin{proof}
If $CF(G)\setminus G$ has at least $n$ points, then it contains at least
$n$ non-empty clopen sets $C_i$ that are mutually disjoint.
We can extend them to mutually disjoint open subsets $U_i$ of $CF(G)$
such that $C_i=cl(U_i)\cap (CF(G)\setminus G)$ for each $i\leq n$.
By \ref{CClopenChar1} each $U_i$ is coarsely clopen and by \ref{AlmostInvariantObservation} each $U_i$ is almost invariant.

Cnversely, the existence of $n$ mutually disjoint non-finite almost invariant subsets of $G$ implies that $CF(G)\setminus G$ contains at least $n$ points.
\end{proof}

\begin{Proposition}\label{LocallyFiniteGroupsCase}
If $G$ is an infinite locally finite group, then its number of ends is infinite.
\end{Proposition}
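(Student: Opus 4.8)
The plan is to avoid the coarse-geometric apparatus altogether and argue group-theoretically, invoking the characterization established in the Proposition just above: the number of ends of $G$ equals the supremum of those $n$ for which $G$ carries $n$ pairwise disjoint infinite almost invariant subsets. Thus it suffices to exhibit, for every $m \ge 1$, a partition of $G$ into $m$ pairwise disjoint infinite almost invariant sets.

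First I would set up the scaffolding. Enumerating $G = \{g_1, g_2, \dots\}$ and putting $H_n := \langle g_1, \dots, g_n \rangle$, local finiteness makes each $H_n$ finite, while $H_1 \subseteq H_2 \subseteq \cdots$ and $\bigcup_n H_n = G$. Since $G$ is infinite, infinitely many of these inclusions are proper, so after passing to a subsequence I may assume I have a strictly increasing chain of finite subgroups $G_1 \subsetneq G_2 \subsetneq \cdots$ with $\bigcup_n G_n = G$. Next I would define $\varphi : G \to \NN$ by $\varphi(x) := \min\{\, n \ge 1 : x \in G_n \,\}$; its fibres $\varphi^{-1}(1) = G_1$ and $\varphi^{-1}(n) = G_n \setminus G_{n-1}$ (for $n \ge 2$) are finite and, thanks to strictness of the chain, nonempty. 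For a fixed modulus $m$ I would then set $A_r := \{\, x \in G : \varphi(x) \equiv r \pmod m \,\}$ for $r = 0, \dots, m-1$; these $m$ sets partition $G$, and each is infinite, being a union of infinitely many nonempty fibres of $\varphi$.

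The substantive step is to show that each $A_r$ is almost invariant, and for this I would prove the single lemma that for every $g \in G$ the \emph{defect set} $D_g := \{\, x \in G : \varphi(xg) \ne \varphi(x) \,\}$ is finite. Given $g$, choose $k$ with $g \in G_k$ (so also $g^{-1} \in G_k$, the $G_i$ being subgroups). If $\varphi(x) = n > k$, then $g \in G_k \subseteq G_n$ and $x \in G_n$ give $xg \in G_n$, hence $\varphi(xg) \le n$; conversely, writing $x = (xg)\, g^{-1}$ and distinguishing the cases $\varphi(xg) \ge k$ and $\varphi(xg) < k$ (the latter forcing $x \in G_k$, contradicting $\varphi(x) = n > k$), one gets $\varphi(xg) \ge n$ as well, so $\varphi(xg) = n$. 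Hence $D_g \subseteq \{\, x : \varphi(x) \le k \,\} = G_k$ is finite. Since $A_r \,\Delta\, (A_r g) \subseteq \{\, x : \varphi(x g^{-1}) \ne \varphi(x) \,\} = D_{g^{-1}}$, each $A_r$ is almost invariant. Running this construction for every $m$ yields arbitrarily large finite families of pairwise disjoint infinite almost invariant subsets of $G$, so $CF(G) \setminus G$ is infinite.

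I expect the only real obstacle to be the finiteness of $D_g$ --- the one place where the nested-subgroup structure of $\{G_n\}$ does actual work --- together with the small but essential point that the chain must be made strictly increasing, without which the $A_r$ need not be infinite. Everything else is routine bookkeeping with the preceding results.
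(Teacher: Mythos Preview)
Your argument is correct. The defect-set lemma is cleanly stated and the verification works: once $g\in G_k$, any $x$ with $\varphi(x)>k$ satisfies $\varphi(xg)=\varphi(x)$ by the two-sided containment argument you give (in the case $\varphi(xg)\ge k$ one has $g^{-1}\in G_k\subseteq G_{\varphi(xg)}$, forcing $x\in G_{\varphi(xg)}$ and hence $n\le\varphi(xg)$). The containment $A_r\,\Delta\,(A_r g)\subseteq D_{g^{-1}}$ is immediate, so each $A_r$ is almost invariant and the preceding Proposition finishes the job.

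Your route is genuinely different from the paper's. The paper argues coarse-geometrically: a locally finite group has asymptotic dimension~$0$ (via \cite{BDHM}), hence every slowly oscillating function on $G$ is glacially oscillating (Corollary~3.7), so the Higson compactification coincides with the coarse Freundenthal compactification, and one then invokes the well-known fact that the Higson corona of an unbounded proper metric space is infinite. Your proof is more elementary and entirely self-contained within the group-theoretic framework: it uses only the almost-invariant-set characterization just proved and an explicit combinatorial construction from a chain of finite subgroups, with no appeal to asymptotic dimension, the Higson corona, or external references. The paper's argument is shorter but leans on heavier machinery; yours is longer but transparent, and it makes the almost invariant sets completely explicit.
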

\begin{proof}
Recall that a group $H$ is \textbf{locally finite} if each finite subset of it is contained in a finite subgroup of $H$. $H$ is locally finite if and only if its asymptotic dimension is $0$. By \ref{CFIsCoronaOfDimZero} the Higson compactification of $G$ is the coarse Freundenthal compactification of $G$. It is well-known that the Higson corona of unbounded metric spaces is infinite.
\end{proof}

\begin{Proposition}\label{SequenceOfSubgroupsProp}
Suppose a group $G$ is the union of an increasing sequence of its non-locally finite subgroups $\{G_i\}_{i\ge 1}$. If $A$ is a coarsely clopen infinite subset of $G$, then there is $n\ge 1$ such that $A\cap G_n$ is infinite.
\end{Proposition}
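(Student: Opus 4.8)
The plan is to argue by contradiction: assuming $A\cap G_n$ is finite for every $n\ge 1$, I will show that $A$ must be finite. Fix the proper left-invariant metric $d$ on $G$. The first step is to produce a single finitely generated infinite subgroup sitting inside all of the $G_n$ simultaneously. Since $G_1$ is not locally finite, it contains a finitely generated infinite subgroup $H$, and because $\{G_i\}$ is increasing, $H\le G_n$ for every $n\ge 1$. Choosing a finite symmetric generating set $S$ of $H$ and putting $C:=\max_{s\in S}d(1_G,s)$, left-invariance of $d$ shows that any two elements of $H$ are joined by a $C$-chain contained in $H$ (follow a word in $S$ and use $d(hs_1\cdots s_{j-1},hs_1\cdots s_j)=d(1_G,s_j)\le C$); translating by $a$ gives the same statement for any left coset $aH$: any two of its elements are joined by a $C$-chain inside $aH$.

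Next I would record two consequences of the contradiction hypothesis. First, for each $a\in G$, if $n$ is least with $a\in G_n$ then $aH\subseteq G_n$, so $A\cap aH\subseteq A\cap G_n$ is finite, whereas $aH$ itself is infinite. Second, by Observation \ref{AlmostInvariantObservation} the set $A$ is almost invariant, so $A\Delta(A\cdot g)$ is finite for every $g$ with $d(1_G,g)\le C$; since $B(1_G,C)$ is finite (properness), the ``$C$-boundary'' $\partial:=\{x\in A:\ d(x,G\setminus A)\le C\}$ is finite, because any $x\in\partial$ lies in $A\setminus Ag^{-1}\subseteq A\Delta Ag^{-1}$ for some $g\in B(1_G,C)$.

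The heart of the argument is to show that only finitely many cosets $aH$ meet $A$. Suppose not; choose $p_k\in A$ lying in pairwise distinct cosets $a_kH$. By the first consequence, $A\cap a_kH$ is finite while $a_kH$ is infinite, so I may also choose $q_k\in a_kH\setminus A$; joining $p_k$ to $q_k$ by a $C$-chain inside $a_kH$ and letting $z_k$ be the last point of that chain lying in $A$ (its successor then lies in $G\setminus A$ at distance $\le C$), we get $z_k\in\partial$ and $z_k\in a_kH$. Distinct cosets force the $z_k$ to be distinct, contradicting finiteness of $\partial$. Together with the first consequence this gives that $A=\bigcup_{aH}(A\cap aH)$ is a finite union of finite sets, hence finite --- contradicting the hypothesis that $A$ is infinite. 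Thus $A\cap G_n$ is infinite for some $n$.

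I expect the delicate point to be the first step: the entire proof rests on having one constant $C$ valid simultaneously for every coset of one fixed subgroup living in all the $G_n$. Extracting $H$ from $G_1$ guarantees it is common to every $G_n$, and left-invariance of $d$ is what transports the $C$-chain structure of $H$ isometrically onto each coset $aH$; without a uniform $C$ the finite-boundary estimate would no longer produce a contradiction.
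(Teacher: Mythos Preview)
Your proof is correct. Both arguments rest on the same three ingredients: extracting from $G_1$ a finitely generated infinite subgroup $H$ (equivalently, arbitrarily long $m$-chains for a fixed $m$), the left-invariance of $d$ to transport $C$-chains to every coset $aH$, and the finiteness of the $C$-boundary of $A$ coming from coarse clopenness.

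The execution differs in the final counting step. The paper picks a single element $a\in A$ lying outside a $G_p$ that already contains the whole boundary region, translates a long $m$-chain $c\subset G_1$ (longer than $|A\cap G_{p+1}|$) to $a\cdot c$, observes that $a\cdot c$ stays inside $A$ and inside $G_{p+1}\setminus G_p$, and reaches a contradiction by pigeonhole on $|A\cap G_{p+1}|$. You instead count cosets: every coset $aH$ meeting $A$ contributes a distinct element of the finite boundary $\partial$, so only finitely many cosets meet $A$, and since each meets $A$ finitely you conclude $A$ is finite. Your version avoids the somewhat awkward index juggling in the paper (where the symbol $m$ is overloaded) and gives a cleaner endgame; the paper's version is a touch more direct in that it produces the large $A\cap G_n$ explicitly rather than going through ``$A$ is finite''. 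Either way, the essential mechanism is the same.
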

\begin{proof}
Suppose $A\cap G_i$ is finite for each $i\ge 1$. Since $G_1$ is not of asymptotic dimension $0$, there is $m\ge 1$ such that $G_1$ contains arbitrarily long $m$-chains. Choose $k\ge 1$ such that
$B(A,m)\cap B(G\setminus A,m)\subset B(1_G,k)$, then find $m > k$ such that $B(1_G,k)\subset G_m$ and then there is $a\in A_{m+1}\setminus G_m$.
Pick an $m$-chain $C$ in $G_1$ that is longer than the number of elements in $A_{m+1}$. By translating (i.e. switching from $C$ to $g\cdot C$ for some $g\in G$) we may assume $C$ starts at $1_G$.
Notice $a\cdot C$ is completely outside of $G_m$, so $a\cdot C\subset A$.
Hence $a\cdot C\subset A_{m+1}$, a contradiction.
\end{proof}

\begin{Definition}
\textbf{NCC} is a \textbf{shortcut for non-trivial coarsely clopen subsets} $Y$ of a metric space $X$, i.e. those coarsely clopen subsets that are infinite and $X\setminus Y$ is infinite.
\end{Definition}

\begin{Lemma}\label{ActingOnThreeEnds}
Suppose $G$ contains three NCC sets that are disjoint. If $G$ is not locally finite, then it acts trivially on at most one of the three NCC sets.
\end{Lemma}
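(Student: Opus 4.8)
We argue by contradiction. Suppose $G$ acts trivially on two of the three disjoint NCC sets, say on $A_1$ and on $A_2$; i.e., $A_1\triangle gA_1$ and $A_2\triangle gA_2$ are finite for every $g\in G$ (equivalently, $A_1$ and $A_2$ determine $G$-invariant clopen subsets of the coarse Freundenthal corona). Let $A_3$ be the third set and put $B:=G\setminus(A_1\cup A_2)\supseteq A_3$. Then $B$ is infinite, it is coarsely clopen by \ref{AlgebraOfCCSets} (hence almost invariant by \ref{AlmostInvariantObservation}), and $B\triangle gB\subseteq(A_1\triangle gA_1)\cup(A_2\triangle gA_2)$ is finite, so $G$ acts trivially on $B$ as well. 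Thus $A_1,A_2,B$ partition $G$ into three infinite coarsely clopen pieces on each of which $G$ acts trivially, and it is this configuration we must contradict.

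I would first fix the auxiliary data. By \ref{OneGlacialScaleForMany} there is one glacial scale $\mathcal S$ such that any $\mathcal S$-chain beginning in one of $A_1,A_2,B$ stays in that set. Since $G$ is not locally finite it has an infinite finitely generated subgroup, and I claim it has an infinite finitely generated subgroup $H$ of infinite index: if $G$ is not finitely generated, take $H:=\langle B(1_G,m)\rangle$ for an $m$ making this subgroup infinite — it is finitely generated, $m$-connected, and of infinite index (otherwise the increasing chain of subgroups $\langle B(1_G,k)\rangle$ would have non-increasing finite index, hence stabilize, forcing $G$ to be finitely generated); if $G$ is finitely generated, then, having three disjoint NCC sets, $G$ has at least three ends and so is not virtually cyclic, and the classical theory of ends of finitely generated groups supplies such an $H$. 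Now enlarge $m$ so that $H$, and hence every left coset of $H$ (a left translate of $H$), is $m$-connected, choose an index $i$ of $\mathcal S$ with $n_i>m$, and put $K:=K_i$, a finite set. Then every $m$-chain contained in $G\setminus K$ is an $\mathcal S$-chain, so every $m$-connected subset of $G\setminus K$ lies entirely in one of $A_1,A_2,B$; call such a set monochromatic.

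Here is the core of the argument. Since $[G:H]=\infty$ while $K$ is finite, some coset $x_0H$ misses $K$; being $m$-connected and contained in $G\setminus K$ it is monochromatic, say $x_0H\subseteq P$ with $P\in\{A_1,A_2,B\}$. Given any coset $D=dH$ that misses $K$, write $D=\gamma(x_0H)$ with $\gamma:=dx_0^{-1}$; then $D\subseteq\gamma P$, and since $G$ acts trivially on $P$ the set $\gamma P\setminus P$ is finite, so $D\cap P$ is cofinite in the infinite set $D$; as $D$ is monochromatic, $D\subseteq P$. Hence every coset of $H$ missing $K$ is contained in $P$, and therefore $G\setminus P\subseteq KH$, a union of at most $|K|$ cosets of $H$. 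Pick $Q\in\{A_1,A_2,B\}\setminus\{P\}$; it is infinite and disjoint from $P$, and $Q\subseteq G\setminus P\subseteq KH$, so $Q\cap D_0$ is infinite for some coset $D_0$ meeting $K$. Since some coset of $H$ misses $K$, there is $t\in G$ with $tD_0$ a coset missing $K$, whence $tD_0\subseteq P$. Then $t(Q\cap D_0)\subseteq tD_0\subseteq P$ and also $t(Q\cap D_0)\subseteq tQ$; but $G$ acts trivially on $Q$ and $Q\cap P=\emptyset$, so $tQ\cap P\subseteq tQ\setminus Q$ is finite, contradicting that $t(Q\cap D_0)$ is infinite. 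This contradiction proves the lemma.

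The step I expect to be the real obstacle is producing the infinite finitely generated subgroup $H$ of \emph{infinite} index. For non-finitely-generated $G$ this is clean; in the finitely generated case one has to know that a group with at least three ends is not virtually cyclic and therefore contains such a subgroup — for example the infinite cyclic subgroup generated by an element of infinite order when one exists, the residual torsion case being covered by the classical structure theory of finitely generated groups with infinitely many ends. A secondary, purely bookkeeping, point used throughout is that left-invariance of the metric turns translates of $m$-chains into $m$-chains and $m$-connected subsets of $G\setminus K$ into $\mathcal S$-connected sets, which is what lets us propagate "monochromatic" under translation.
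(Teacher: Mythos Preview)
Your argument is correct, but it takes a genuinely different route from the paper's. The paper first invokes Proposition~\ref{SequenceOfSubgroupsProp} to reduce outright to the case where $G$ is finitely generated (intersecting the $E_i$ with a large enough finitely generated subgroup preserves ``NCC'' and ``acted on trivially''), equips $G$ with a word metric, and then runs a short direct $1$-chain argument: it picks $g_3\in E_3$ far from the identity so that $g_3\cdot K\subset E_3$, uses trivial action on $E_1$ to find $g_1\in E_1$ with $g_3 g_1\in E_1$, and then translates by $g_3$ a $1$-chain from $g_1$ toward $K$ inside $E_1$ to produce a point of $E_1$ landing in $K$ after translation; doing the same with $E_2$ forces the small set $g_3^{-1}K$ to meet both $E_1$ and $E_2$, which is impossible. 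No subgroup of infinite index is ever produced, and no structure theory of ends is invoked.

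Your coset-coloring argument is clean and arguably more conceptual once $H$ is in hand: monochromaticity of cosets missing $K$, propagation of the color via trivial action, and the pigeonhole contradiction are all straightforward. The cost is the production of $H$. In the non-finitely-generated case your construction is fine, but in the finitely generated case you are appealing to the classical end theory (ultimately Stallings' theorem) to know that a group with at least three ends is not virtually cyclic and hence contains an infinite finitely generated subgroup of infinite index; you flag this yourself as the real obstacle, and indeed it is --- without that input there is no elementary way to exclude, say, a Tarski-monster-type pathology directly from your setup. So your proof trades the paper's elementary reduction-plus-chain argument for a structurally nicer endgame at the price of a heavy external dependency in exactly the case the paper handles by hand. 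If you want a self-contained proof in the spirit of the paper, you could keep your argument for the non-finitely-generated case and substitute the paper's direct $1$-chain computation when $G$ is finitely generated; alternatively, since Stallings' theorem is used later in the paper anyway, your version is acceptable as long as the dependency is made explicit.
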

\begin{proof}
$G$ \textbf{acts trivially} on an NCC set $E$ means the symmetric difference $E\Delta (g\cdot E)$ is finite for each $g\in G$.

Suppose $G$ acts trivially on disjoint NCC sets $E_1$, $E_2$ and $E_3$
is an NCC sets disjoint from $E_1\cup E_2$.
Using \ref{SequenceOfSubgroupsProp} we may reduce the proof to $G$ being finitely generated. Equip $G$ with a left-invariant word metric $d$.
Find a bounded subset $K$ of $G$ containing $1_G$ such that if $i\ne j$ and $g\in E_i\setminus K$, $h\in E_j\setminus K$, then $d(g,h) > 2$. Same for complements.
Let $E_4:=G\setminus (E_1\cup E_2\cup E_3)$. Either $E_4$ is an NCC or it is finite.
Find $m\ge 1$ such that for any $x\in E_i$, $i\leq 3$, of norm at least $m$, $B(x,2\cdot diam(K)+2)$
is contained in $E_i$. If $E_4$ is unbounded, require the same property for $E_4$, otherwise require that $B(x,diam(K)+1)$ is disjoint with $E_4$.

In $E_3$ find an element $g_3$ of the norm bigger than $m$. Hence
$g_3\cdot K\subset E_3$.

Since $E_1\Delta (g_3\cdot E_1)$ is finite,
choose $g_1\in E_1$ of the norm larger than $m$ such that $g_3\cdot g_1\in E_1$. Given a $1$-chain $c$ joining $g_1$ to $g_0\in K$, it stays in $E_1$ until it hits $K$ for the first time. Truncate $c$ to include only those elements of $G$. Now, $g_3\cdot c$ is a $1$-chain starting in $E_1$ and ending in $E_3$. Therefore it hits $K$ at certain moment. That means existence of $x_1\in E_1$ such that $g_3\cdot x_1\in K$.
Similarly, we can find $x_2\in E_2$ such that $g_3\cdot x_2\in K$. That means $g_3^{-1}\cdot K$ intersects both $E_1$ and $E_2$, a contradiction as that set is contained exclusively in only one of $E_i$, $i\leq 4$, due to the norm of
$g_3^{-1}$ being larger than $m$.
\end{proof}

\begin{Theorem}
If $G$ is a countable group, then the number of ends of $G$ is either infinite or at most $2$.
\end{Theorem}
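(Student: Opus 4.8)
The plan is to argue by contradiction, with Lemma~\ref{ActingOnThreeEnds} as the engine. Suppose the corona $\Omega:=CF(G)\setminus G$ is finite with $|\Omega|\ge 3$. First I would dispose of the locally finite case: if $G$ is locally finite it is either finite, hence of $0$ ends, or infinite, in which case Proposition~\ref{LocallyFiniteGroupsCase} already gives infinitely many ends; either way $|\Omega|$ cannot be a finite number $\ge 3$. So from now on $G$ is not locally finite, i.e. $\asdim G\ne 0$.

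Next I would pass to a finite-index subgroup on which the induced action on the corona is trivial. Every left translation $L_g$ of $G$ is an isometry of the chosen proper left-invariant metric, hence a coarse equivalence, so by Proposition~\ref{InducedMapBetweenCoarseFreundenthal} it extends to a self-homeomorphism of $CF(G)$ fixing $G$ setwise and therefore permuting $\Omega$. Since a finite Hausdorff space is discrete, this produces a homomorphism $G\to\operatorname{Sym}(\Omega)$; let $G_0$ be its kernel, of finite index in $G$. The inclusion $G_0\hookrightarrow G$ is a coarse equivalence — the restriction of the metric is again proper and left-invariant, and $G_0$ is coarsely dense because $G$ is covered by finitely many right cosets of $G_0$ — so $\asdim G_0=\asdim G\ne 0$, and by Proposition~\ref{CFAreHomeo} the inclusion induces a homeomorphism $CF(G_0)\setminus G_0\to\Omega$ which is $G_0$-equivariant (translations of $G_0$ and of $G$ agree through the inclusion, and extensions to the compactifications are unique). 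Hence $|CF(G_0)\setminus G_0|=|\Omega|\ge 3$, $G_0$ is not locally finite, and $G_0$ acts trivially on its own corona.

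Finally I would convert triviality of the action into the hypothesis ruled out by Lemma~\ref{ActingOnThreeEnds}. Because the corona of $G_0$ has at least three points, the description of the number of ends as the supremum of the sizes of families of pairwise disjoint non-finite almost invariant subsets (the Proposition following Observation~\ref{AlmostInvariantObservation}) yields three mutually disjoint, non-finite, almost invariant — equivalently coarsely clopen, by Observation~\ref{AlmostInvariantObservation} — subsets $E_1,E_2,E_3$ of $G_0$, each co-infinite since its complement contains the other two, hence each an NCC set. For any coarsely clopen $E\subseteq G_0$ the function $\chi_E$ extends continuously (with values in $\{0,1\}$) over $CF(G_0)$, so $cl(E)$ and $cl(G_0\setminus E)$ have disjoint traces on the corona — the $CF$-analogue of Proposition~\ref{CClopenChar1} — and $cl(E)\cap(CF(G_0)\setminus G_0)$ is clopen. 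As $G_0$ fixes every point of the corona, it fixes this clopen set, i.e. $cl(gE)\cap(CF(G_0)\setminus G_0)=cl(E)\cap(CF(G_0)\setminus G_0)$ for all $g\in G_0$; then $gE\setminus E$ and $E\setminus gE$ are coarsely clopen (Lemma~\ref{AlgebraOfCCSets}) with empty trace on the corona, hence finite (an infinite subset of $G_0$ has closure meeting the corona), so $E\,\Delta\,gE$ is finite. Thus $G_0$ acts trivially on each of $E_1,E_2,E_3$, contradicting Lemma~\ref{ActingOnThreeEnds}. Therefore $|\Omega|$ is infinite or at most $2$.

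The step I expect to be the main obstacle is the bookkeeping underlying the second and third paragraphs: checking carefully that the inclusion of a finite-index subgroup is a coarse equivalence for these left-invariant metrics, that the resulting identification of coronas is $G_0$-equivariant, and that the correspondence between coarsely clopen subsets and clopen subsets of the corona holds for $CF(X)$ word for word as Proposition~\ref{CClopenChar1} states it for the Higson compactification. Granting those, the collision with Lemma~\ref{ActingOnThreeEnds} is immediate.
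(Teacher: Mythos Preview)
Your proposal is correct and follows essentially the same route as the paper: contradiction, dispose of the locally finite case via Proposition~\ref{LocallyFiniteGroupsCase}, pass to the finite-index kernel of the permutation action on the (finite) corona, identify its corona with $\Omega$ via Proposition~\ref{CFAreHomeo}, and collide with Lemma~\ref{ActingOnThreeEnds}. The paper's version is considerably terser---it leaves the passage from ``$H$ fixes every end'' to ``$H$ acts trivially on at least two disjoint NCC sets'' implicit---whereas you spell out this translation (clopen traces on the corona, finiteness of $E\Delta gE$) and the coarse-equivalence bookkeeping for the finite-index subgroup; these are exactly the details the paper suppresses, so there is no genuine divergence in strategy.
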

\begin{proof}
If $G$ is finite, then $Ends(G)$ is empty.
If $G$ is locally finite and infinite, then $Ends(G)$ is infinite by \ref{LocallyFiniteGroupsCase}.

Assume $G$ is infinite, not locally finite, its number of ends is finite, and it contains three NCC sets
that are disjoint. Since $G$ acts on its ends via the left multiplication, there is a subgroup $H$ of $G$ of finite index that acts on $Ends(G)$ trivially. By \ref{CFAreHomeo} (Case 2) $H$ acts trivially on $Ends(H)$ which is equal to $Ends(G)$. This contradicts Lemma \ref{ActingOnThreeEnds}.
\end{proof}

\begin{Theorem}
Suppose $G$ is a countable group. If $G$ is the union of an increasing sequence $\{G_n\}_{n\ge 1}$ of its subgroups that have finitely many ends (that have at most $m$ ends), then the number of ends of $G$ is at most $2$ (is at most $m$).
\end{Theorem}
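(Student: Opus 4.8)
The plan is to argue by contradiction, using that for $k\ge 2$ a countable group has at least $k$ ends if and only if it contains $k$ pairwise disjoint NCC subsets: this follows from the characterization of the number of ends as the supremum of the sizes of families of pairwise disjoint non-finite almost invariant subsets, together with Observation~\ref{AlmostInvariantObservation}, noting that when $k\ge 2$ each member of such a family automatically has infinite complement because it contains the others. It then suffices to show that if each $G_n$ has at most $m$ ends, then $G$ cannot contain $m+1$ pairwise disjoint NCC subsets; the tool for this is Proposition~\ref{SequenceOfSubgroupsProp}, which pushes a coarsely clopen infinite subset of $G$ into one of the $G_n$.

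I would first carry out the routine reductions. If $G$ is finite there is nothing to prove, so assume $G$ is infinite; if moreover $G$ is locally finite then $\mathrm{Ends}(G)$ is infinite by Proposition~\ref{LocallyFiniteGroupsCase}, so the statement must be read as tacitly excluding that case, i.e.\ $G$ is not locally finite. Then $G$ contains a finitely generated infinite subgroup, which lies in all but finitely many of the $G_n$; after discarding an initial segment we may assume that every $G_n$ is non-locally finite, while $\{G_n\}_{n\ge 1}$ is still an increasing exhaustion of $G$ by subgroups having at most $m$ ends. Fix a proper left-invariant metric $d$ on $G$. For each $n$ the restriction $d|_{G_n}$ is again a proper left-invariant metric on $G_n$, so by the coarse invariance of the coarse Freundenthal corona (Proposition~\ref{CFAreHomeo}) it computes $\mathrm{Ends}(G_n)$ correctly; and since the restriction to $G_n$ of a slowly oscillating characteristic function on $G$ is slowly oscillating, a coarsely clopen subset of $G$ meets $G_n$ in a coarsely clopen subset of $G_n$.

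Now assume $G$ has at least $m+1$ ends and choose pairwise disjoint NCC subsets $E_1,\dots,E_{m+1}$ of $G$; each is coarsely clopen and infinite in $G$. By Proposition~\ref{SequenceOfSubgroupsProp} there is $n_j$ with $E_j\cap G_{n_j}$ infinite; put $n=\max_j n_j$. Since the $G_i$ increase, $E_j\cap G_n$ is infinite for every $j\le m+1$; it is coarsely clopen in $G_n$ by the previous paragraph; its complement in $G_n$ contains $E_k\cap G_n$ for any $k\ne j$ and hence is infinite; and the sets $E_j\cap G_n$ are pairwise disjoint. Thus $G_n$ contains $m+1$ pairwise disjoint NCC subsets, so $\mathrm{Ends}(G_n)$ has at least $m+1$ elements, contradicting the hypothesis. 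This proves the parenthetical statement for every finite $m$; the unparenthesized assertion is the case $m=2$, using that by the dichotomy for countable groups "finitely many ends" for a subgroup means at most two.

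The crux, and the only real obstacle, is the descent from NCC subsets of $G$ to NCC subsets of a single $G_n$ --- precisely what Proposition~\ref{SequenceOfSubgroupsProp} supplies (its proof uses that a non-locally-finite group has positive asymptotic dimension, hence carries arbitrarily long $m$-chains). The points that need a touch of care are the reduction to the case where every $G_n$ is non-locally finite, and the verification that the subspace metric on $G_n$ is an admissible proper left-invariant metric, so that $\mathrm{Ends}(G_n)$ really is the quantity bounded by the hypothesis.
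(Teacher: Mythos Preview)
Your argument is correct and follows exactly the paper's route: assume $m+1$ pairwise disjoint NCC subsets of $G$ and use Proposition~\ref{SequenceOfSubgroupsProp} to locate a single $G_n$ in which all of them remain NCC, contradicting the bound on $\mathrm{Ends}(G_n)$. Your write-up is in fact more careful than the paper's one-line proof: you make explicit the reduction to non-locally-finite $G_n$ (needed to invoke Proposition~\ref{SequenceOfSubgroupsProp}), the fact that the subspace metric on $G_n$ is proper left-invariant, and the locally finite caveat, all of which the paper leaves implicit.
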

\begin{proof}
Suppose $G$ has $m+1$ mutually disjoint NCC sets $E_i$, $i\leq m+1$.
By
\ref{SequenceOfSubgroupsProp} we can find an index $n$ such that
each $E_i\cap G_n$ is an NCC set in $G_n$, a contradiction.
\end{proof}

\begin{Theorem}
If $G$ is a countable group with $2$ ends, then it is finitely generated. Therefore it contains an infinite cyclic subgroup of finite index.
\end{Theorem}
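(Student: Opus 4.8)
The plan is to prove that $G$ is finitely generated; the ``therefore'' is then the classical structure theory of finitely generated two-ended groups (Stallings' theorem, or \ref{FEqualsCF} together with Hopf's theorem). Equip $G$ with a proper left-invariant metric, possible since $G$ is countable. Two reductions come first. By \ref{LocallyFiniteGroupsCase} an infinite locally finite group has infinitely many ends, so $G$ is not locally finite and hence contains a finitely generated subgroup that is not locally finite. From $|CF(G)\setminus G|=2$ one gets, via \ref{CClopenChar1}, a partition $G=A\sqcup B$ into two disjoint NCC sets; each is almost invariant by \ref{AlmostInvariantObservation}, and, as $G$ has no three disjoint NCC sets, every NCC set of $G$ has finite symmetric difference with $A$ or with $B$.

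The engine is a homomorphism to $\ZZ$. Right translations preserve almost invariance, so $gA$ has finite symmetric difference with $A$ or with $B$ for every $g$, whence $H:=\{g\in G:A\Delta gA\text{ is finite}\}$ is a subgroup of index at most $2$, again two-ended by \ref{CFAreHomeo}. For $h\in H$ both $A\setminus hA$ and $hA\setminus A$ are finite, so $\theta(h):=|A\setminus hA|-|hA\setminus A|$ is well defined, and a short telescoping computation with symmetric differences shows that $\theta\colon H\to\ZZ$ is a homomorphism. The whole proof then hinges on the claim that $\ker\theta$ is finite. Granting it: if $\theta=0$ then $H=\ker\theta$ is finite, contradicting that $H$ is two-ended; otherwise $H/\ker\theta$ is infinite cyclic, so $H$ is finite-by-$\ZZ$, hence finitely generated and virtually infinite cyclic, and therefore so is $G$ since $[G:H]\le 2$. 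The last sentence of the theorem follows.

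So everything rests on the finiteness of $\ker\theta$, and this is the step I expect to be the main obstacle. The underlying picture is that a two-ended proper metric space is coarsely a line, and $\theta$ is, up to a bounded error and a fixed scaling constant, the coordinate along that line, so $\theta^{-1}(0)$ ought to be bounded: for $h\in A\cap H$ at distance at least $r$ from $B$ one expects the finite set $hB\cap A$ to contain roughly $|\{x\in A:d(x,1_G)\le r\}|$ points while $hA\cap B$ stays bounded, forcing $|\theta(h)|\to\infty$ as $r\to\infty$ and confining $\theta^{-1}(0)$ to a finite neighbourhood of the ``boundary'' of $A$ in $B$. To make this rigorous with the machinery at hand I would argue by contradiction: if $\ker\theta$ were infinite, write $H$ as an increasing union of finitely generated non-locally-finite subgroups $H_k=\langle F_k,s\rangle$, where $F_k$ exhausts $\ker\theta$ (feeding in finite subgroups via \ref{LocallyFiniteGroupsCase} when $\ker\theta$ is locally finite, and a single finitely generated non-locally-finite subgroup otherwise) and $\theta(s)$ generates $\theta(H)$; then \ref{SequenceOfSubgroupsProp}, applied to the halves $A\cap H$ and $B\cap H$, pins the ends of $H$ down inside the $H_k$, and together with \ref{ActingOnThreeEnds} this should produce a third NCC set of $H$ — built from the unbounded fibre $\ker\theta$ ``stretched'' along the $\langle s\rangle$-direction — contradicting that $H$ has exactly two ends. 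Producing that third NCC set, equivalently ruling out an unbounded asymptotically zero-dimensional fibre over the $\ZZ$-direction, is the crux.
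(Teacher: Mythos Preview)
Your route via the end-homomorphism $\theta(h)=|A\setminus hA|-|hA\setminus A|$ is genuinely different from the paper's argument, and the homomorphism is correctly set up (one small slip: the reason $gA$ is coarsely clopen is that \emph{left} translation is an isometry for a left-invariant metric, not that right translations preserve almost invariance). The paper instead argues by contradiction: assuming $G$ is not finitely generated, it passes to the index-$\le 2$ subgroup $H$ and, using \ref{SequenceOfSubgroupsProp} together with a theorem of Mihalik, shows that every finite subset of $H$ lies in a finitely generated subgroup with exactly two ends; Stallings for finitely generated groups then gives an exhausting chain $H_1\subset H_2\subset\cdots$ with $H_1=\langle t\rangle$ infinite cyclic and each $H_n$ of finite index in $H_{n+1}$. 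The contradiction is obtained by a concrete coarse-geometric step: for a suitable $x\in H_{k+1}\setminus H_k$, the right $\langle t\rangle$-orbit $\{x\cdot t^n\}_{n\in\ZZ}$ is isometric to $H_1$, stays entirely inside one NCC half $E$, and yet is a coarse equivalence onto $H_{k+1}$, forcing $E$ to be simultaneously NCC in and all of that orbit.

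The gap in your proposal is exactly the one you flag: you have not proved that $\ker\theta$ is finite, and the sketch you give does not close it. The intuitive picture (``$\theta$ is the coordinate along the line, so $\theta^{-1}(0)$ is bounded'') presupposes that $H$ is coarsely a line, which is equivalent to what you are trying to prove; for a group not yet known to be finitely generated there is no a priori control on the size of $A\cap hB$ or $B\cap hA$ as $h$ moves in $\ker\theta$. Your contradiction scheme (``an infinite $\ker\theta$ should produce a third NCC set'') is not fleshed out: nothing in \ref{SequenceOfSubgroupsProp} or \ref{ActingOnThreeEnds} manufactures an NCC set from an infinite normal subgroup with trivial $\theta$, and there is a genuine obstacle---one must rule out, in effect, an unbounded ``transverse'' direction, and doing so is where the real content lies. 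The paper's argument avoids this circularity by importing Stallings at the level of finitely generated subgroups and then finishing with the orbit argument above; if you want to push the $\theta$-approach through, you would need an independent reason why $\theta$ is nontrivial with finite kernel, and that is essentially a second proof of the two-ended case of Stallings rather than a consequence of the lemmas available here.
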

\begin{proof}
Suppose $G$ is a countable group with $2$ ends that is not finitely generated. We will show that there exists a subgroup $H$ of $G$ of index at most $2$ and a strictly increasing sequence $H_n$ of subgroups of $H$ satisfying the following conditions:\\
1. $H_1$ is infinite cyclic,\\
2. $H_n$ is of finite index in $H_{n+1}$ for each $n\ge 1$,\\
3. $H$ is the union of all $H_n$, $n\ge 1$.

$G$ acts on its ends $Ends(G)$ and it has a subgroup $H$ acting on $Ends(G)=Ends(H)$ trivially. Express $H$ as the union of two disjoint NCC sets $E_1, E_2$ which are almost invariant in $H$.
Given a finite subset $F$ of $H$ we can find using \ref{SequenceOfSubgroupsProp} a finitely generated subgroup $H_F$ of $H$ such that both $E_1\cap H_F$ and $E_2\cap H_F$ are NCC sets in $H_F$.
By a theorem of Mike Mihalik (see \cite{MM}, Theorem 1.2.12) $H_F$ cannot have infinitely many ends as for such groups  $H_F\cdot E$ is dense in $Ends(H_F)$ for any end $E$. In particular, there is $g\in H_F$ such that $(g\cdot E_1\cap H_F)\cap E_2$ is an NCC set in $H_F$, a contradiction. Thus $H_F$ has exactly $2$ ends. By a theorem of J.Stallings, $H_F$ has an infinite cyclic subgroup of finite index. In particular, if we construct two subgroups
$H_F\subset H_{F'}$ that way, then $H_F$ is of finite index in $H_{F'}$. Using these fact it is easy to construct the required sequence $H_n$ of subgroups of $H$.

Suppose $H$ has an NCC set $C$. There is $m\ge 1$ such that
$E\cap H_m$ and $E^c\cap H_m$ are both infinite. Let $t$ be a generator of $H_1$. Since both $E\Delta(E\cdot t)$ and $E^c\Delta(E^c\cdot t)$ are finite,
there is $k > m$ such that both these sets are contained in $H_k$.
Given $x\in E\cap H_{k+1}\setminus H_k$ one has $x\cdot t\in E$ as $x\cdot t\notin H_k$. Consequently, $x\cdot t^n\in E$ for all integer $n$.
The set $A:=\{x\cdot t^n\}_{n\in \mathbb{Z}}$ is isometric to $H_1$
as $d(x\cdot t^i,x\cdot t^j)=d(t^i,t^j)$ for all $i,j\in \mathbb{Z}$.
Let $f:H_{k+1}\to H_1$ be a coarse inverse of the inclusion $H_1\to H_{k+1}$.
Since $f|A:A\to H_1$ is a coarse embedding, it must be a coarse equivalence.
Therefore the inclusion $A\to H_{k+1}$ is a coarse equivalence
and $A\cap E$ ought to be an NCC set in $A$ contradicting $A\subset E$.
\end{proof}

\begin{Corollary}
The group of rational numbers has $1$ end. More generally, any countable subgroup of reals has one end if it is not finitely generated.
\end{Corollary}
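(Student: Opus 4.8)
The plan is to deduce everything from the two structural theorems already established: that a countable group which is an increasing union of subgroups with finitely many ends has at most two ends, and that a countable group with exactly two ends is finitely generated.

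First I would record the elementary fact that every finitely generated subgroup of $\mathbb{R}$ is torsion-free and finitely generated abelian, hence isomorphic to $\mathbb{Z}^{k}$ for some $k\ge 0$. Consequently such a subgroup has $0$, $1$, or $2$ ends; in every case it has only finitely many ends. Next, let $G$ be a countable subgroup of $\mathbb{R}$ that is not finitely generated, and equip it with a proper left-invariant metric. Choosing any countable generating set $\{x_n\}_{n\ge 1}$ of $G$ and setting $G_n:=\langle x_1,\dots,x_n\rangle$ exhibits $G$ as the union of an increasing sequence of finitely generated — hence finitely-ended — subgroups. By the theorem on increasing unions of finitely-ended subgroups, $G$ has at most two ends.

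To pin down the exact value, observe that $G$, not being finitely generated, is in particular infinite, so it does not have $0$ ends. And if $G$ had exactly two ends, then by the Stallings-type theorem proved above $G$ would be finitely generated, contrary to hypothesis. Therefore $G$ has exactly one end. Finally, $\mathbb{Q}$ is a countable subgroup of $\mathbb{R}$ which is not finitely generated (every finitely generated subgroup of $\mathbb{Q}$ is cyclic), so the general statement applies and $\mathbb{Q}$ has one end.

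I do not anticipate a real obstacle here: the argument is a straightforward assembly of the theorems already in hand. The only points needing a moment's care are the identification of finitely generated subgroups of $\mathbb{R}$ with free abelian groups of finite rank, and the observation that the increasing-union theorem is stated for an arbitrary countable group, so it applies verbatim to $G$ once $G$ is presented as such a union; the role of that theorem is precisely to bound the number of ends from above before the two-ended case is excluded.
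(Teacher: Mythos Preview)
Your proposal is correct and follows exactly the route the paper intends: the Corollary is stated without proof in the paper, as an immediate consequence of the two preceding theorems (the increasing-union theorem to bound the number of ends by $2$, and the two-ends theorem to exclude the case of exactly two ends via the contrapositive). The one external ingredient you supply, that finitely generated subgroups of $\mathbb{R}$ are free abelian of finite rank and hence have $0$, $1$, or $2$ ends, is standard and is the only thing the paper leaves implicit.
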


\section{Ends of coarse spaces}
In this section we generalize the concept of Freundenthal ends to arbitrary coarse spaces. See \cite{JDEnds} for other ways to introduce ends in coarse spaces.

We follow a description of coarse spaces (quite often our terminology is that of \textbf{large scale spaces}) as in \cite{DH}. It is equivalent to Roe's definition of those spaces in \cite{Roe lectures}.

Recall that a \textbf{star} $st(x,U)$ of $x\in X$ with respect to a family $\mathcal{U}$ of subsets of $X$ is defined as the union of $U\in \mathcal{U}$ containing $x$. If $A\subset X$, then $st(A,\mathcal{U})=\bigcup\limits_{x\in A}st(x,\mathcal{U})$. Given two families $\mathcal{U},\mathcal{V}$ of subsets of $X$,
$st(\mathcal{U},\mathcal{V})$ is defined as the family $st(A,\mathcal{V})$, $A\in \mathcal{U}$.

\begin{Definition}
A \textbf{large scale space} is a set $X$ equipped with a family $\mathbb{LSS}$ of covers (called \textbf{uniformly bounded} covers) satisfying the following two conditions:\\
1. 
$st(\mathcal{U},\mathcal{V})\in \mathbb{LSS}$ if $\mathcal{U},\mathcal{V}\in \mathbb{LSS}$.\\
2. If $\mathcal{U}\in \mathbb{LSS}$ and every element of $\mathcal{V}$ is contained in some element of $\mathcal{U}$, then $\mathcal{V}\in \mathbb{LSS}$.

Sets which are contained in an element of $\mathcal{U}\in \mathbb{LSS}$ are called \textbf{bounded}.
\end{Definition}

\begin{Definition}
A subset $A$ of a large scale space $X$ is \textbf{coarsely clopen} if for every uniformly bounded cover $\mathcal{U}$ of $X$ the set $st(A,\mathcal{U})\cap st(X\setminus A,\mathcal{U})$ is bounded.

A \textbf{non-trivial coarsely clopen} subset $A$ of a large scale space $X$ (an NCC-set for short) is one that is not bounded and $X\setminus A$ is not bounded.
\end{Definition}

\begin{Lemma}\label{StLemma}
$st(A_1\cap A_2,\mathcal{U})\cap st((A_1\cap A_2)^c,\mathcal{U})\subset
st(A_1,\mathcal{U})\cap st((A_1)^c,\mathcal{U})\cup st(A_2,\mathcal{U})\cap st((A_2)^c,\mathcal{U})$.
\end{Lemma}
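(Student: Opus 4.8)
The statement is a purely set-theoretic inclusion about stars, so the plan is to verify it pointwise by chasing an arbitrary element of the left-hand side through the definition of $st(A,\mathcal U)=\bigcup_{x\in A}st(x,\mathcal U)$. Write $B_1=A_1\cap A_2$ and note $B_1^c=A_1^c\cup A_2^c$. Take $z\in st(B_1,\mathcal U)\cap st(B_1^c,\mathcal U)$. Then there is $U\in\mathcal U$ with $z\in U$ and $U\cap B_1\neq\emptyset$, and there is $V\in\mathcal U$ with $z\in V$ and $V\cap B_1^c\neq\emptyset$. From $U\cap B_1\neq\emptyset$ we get a point in $U$ lying in both $A_1$ and $A_2$, so $z\in st(A_1,\mathcal U)$ and $z\in st(A_2,\mathcal U)$ simultaneously. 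From $V\cap B_1^c\neq\emptyset$ we get a point of $V$ in $A_1^c$ or in $A_2^c$; in the first case $z\in st(A_1^c,\mathcal U)$, in the second case $z\in st(A_2^c,\mathcal U)$.

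Combining, in the first case $z\in st(A_1,\mathcal U)\cap st(A_1^c,\mathcal U)$, and in the second case $z\in st(A_2,\mathcal U)\cap st(A_2^c,\mathcal U)$; either way $z$ lies in the right-hand side, which is exactly the claimed inclusion. So the whole argument is a short case split on which of $A_1^c,A_2^c$ the witnessing point of $V$ falls into, using that $st(A_i,\mathcal U)$ is monotone in $A_i$ together with $B_1\subset A_i$ and $A_i^c\subset B_1^c$.

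There is no real obstacle here; the only thing to be careful about is operator precedence in the displayed right-hand side — the expression should be read as $\bigl(st(A_1,\mathcal U)\cap st(A_1^c,\mathcal U)\bigr)\cup\bigl(st(A_2,\mathcal U)\cap st(A_2^c,\mathcal U)\bigr)$, i.e. $\cap$ binds tighter than $\cup$, so that the inclusion is the obvious ``diameter of the boundary of an intersection is controlled by the boundaries of the pieces'' statement. Writing out the pointwise chase with that reading makes the verification immediate, and this lemma then feeds into showing that finite intersections (hence, with Lemma~\ref{StLemma}'s analogue for unions and complements, the Boolean algebra) of coarsely clopen subsets of a large scale space are again coarsely clopen, generalizing Lemma~\ref{AlgebraOfCCSets}.
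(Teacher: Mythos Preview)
Your proof is correct and follows exactly the same pointwise chase as the paper's: pick an element of the left-hand side, use a witness in $A_1\cap A_2$ to land in both $st(A_1,\mathcal U)$ and $st(A_2,\mathcal U)$, then use that $(A_1\cap A_2)^c=A_1^c\cup A_2^c$ to split into the two cases. The only difference is cosmetic (you phrase it via sets $U,V\in\mathcal U$ meeting the relevant subsets, the paper via points $y,z$ with $x\in st(y,\mathcal U)$), and your remark on the intended precedence of $\cap$ over $\cup$ on the right-hand side is accurate.
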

\begin{proof}
Suppose $x\in st(A_1\cap A_2,\mathcal{U})\cap st((A_1\cap A_2)^c,\mathcal{U})$. There is $y\in A_1\cap A_2$ satisfying $x\in st(y,\mathcal{U})$
and there is $z\in A_1^c\cup A_2^c$ satisfying $x\in st(z,\mathcal{U})$.
Thus either $x\in st((A_1)^c,\mathcal{U})$ or $x\in st((A_2)^c,\mathcal{U})$
and we are done.
\end{proof}

\begin{Corollary}
\label{IntersectionOFCCs}
The intersection of two coarsely clopen subsets of $X$ is coarsely clopen.
\end{Corollary}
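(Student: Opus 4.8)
The plan is to deduce this directly from Lemma \ref{StLemma}, which is the whole point of having stated that lemma. First I would let $A_1$ and $A_2$ be coarsely clopen subsets of the large scale space $X$ and fix an arbitrary uniformly bounded cover $\mathcal{U}$ of $X$. The goal is to show that $st(A_1\cap A_2,\mathcal{U})\cap st((A_1\cap A_2)^c,\mathcal{U})$ is bounded.

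By Lemma \ref{StLemma}, this set is contained in the union
\[
\bigl(st(A_1,\mathcal{U})\cap st((A_1)^c,\mathcal{U})\bigr)\cup\bigl(st(A_2,\mathcal{U})\cap st((A_2)^c,\mathcal{U})\bigr).
\]
Since $A_1$ is coarsely clopen, the first set in this union is bounded; since $A_2$ is coarsely clopen, the second set is bounded. So the set we care about is contained in the union of two bounded sets. The only remaining point is to observe that a union of two bounded sets is bounded: if $B_1$ is contained in an element of $\mathcal{U}_1\in\mathbb{LSS}$ and $B_2$ is contained in an element of $\mathcal{U}_2\in\mathbb{LSS}$, then $st(\mathcal{U}_1,\mathcal{U}_2)\in\mathbb{LSS}$ by axiom 1, and $B_1\cup B_2$ is contained in a single element of this cover (by enlarging, e.g. taking the star of the element containing $B_1$ with respect to $\mathcal{U}_2$, which contains both). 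Hence $st(A_1\cap A_2,\mathcal{U})\cap st((A_1\cap A_2)^c,\mathcal{U})$ is bounded, and since $\mathcal{U}$ was arbitrary, $A_1\cap A_2$ is coarsely clopen.

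There is essentially no obstacle here; the content is packaged into Lemma \ref{StLemma}, and the corollary is a two-line consequence. The only mildly delicate point worth spelling out is the closure of the bounded sets under finite unions, which is immediate from the star axiom for $\mathbb{LSS}$ — but one should make sure to phrase it so that a subset of a bounded set is bounded (which follows from axiom 2, since any cover refining a uniformly bounded cover is uniformly bounded). I would write the proof as:

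\begin{proof}
Let $A_1,A_2$ be coarsely clopen and let $\mathcal{U}\in\mathbb{LSS}$ be a uniformly bounded cover of $X$. By \ref{StLemma},
$$st(A_1\cap A_2,\mathcal{U})\cap st((A_1\cap A_2)^c,\mathcal{U})\subset B_1\cup B_2,$$
where $B_j:=st(A_j,\mathcal{U})\cap st((A_j)^c,\mathcal{U})$ is bounded since $A_j$ is coarsely clopen, $j=1,2$. A union of two bounded sets is bounded: pick $\mathcal{U}_j\in\mathbb{LSS}$ with $B_j$ contained in an element $V_j\in\mathcal{U}_j$; then $st(\mathcal{U}_1,\mathcal{U}_2)\in\mathbb{LSS}$ and $B_1\cup B_2$ is contained in $st(V_1,\mathcal{U}_2)$, an element of $st(\mathcal{U}_1,\mathcal{U}_2)$. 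Hence $st(A_1\cap A_2,\mathcal{U})\cap st((A_1\cap A_2)^c,\mathcal{U})$ is bounded, and as $\mathcal{U}$ was arbitrary, $A_1\cap A_2$ is coarsely clopen.
\end{proof}
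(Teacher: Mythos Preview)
Your approach is exactly the paper's: the paper's entire proof is ``Apply \ref{StLemma}.'' You have simply unpacked that line.

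One small slip worth fixing in your auxiliary claim that a union of two bounded sets is bounded: the set $st(V_1,\mathcal{U}_2)$ need not contain $B_2$, since there is no reason the element $V_2\in\mathcal{U}_2$ containing $B_2$ should meet $V_1$. In a general large scale space the union of two bounded sets can fail to be bounded (take $X=\{0,1\}$ with only the discrete cover uniformly bounded). The paper sidesteps this by assuming coarse connectedness---``the union of two bounded subsets of $X$ is bounded''---as part of the definition of a \emph{topological coarse space} a few lines later, and the Corollary is only ever invoked in that setting. So rather than the star-axiom argument, simply cite coarse connectedness (or note the Corollary is stated under that standing hypothesis).
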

\begin{proof}
Apply \ref{StLemma}.
\end{proof}

\begin{Definition}
A topology on $X$ is \textbf{compatible} with the large scale structure on $X$ if there is a uniformly bounded cover of $X$ consisting of open subsets of $X$. 
\end{Definition}

\begin{Observation}
The simplest non-trivial topology compatible with a large scale structure is the discrete topology.
\end{Observation}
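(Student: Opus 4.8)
The plan is to take the phrase ``compatible with a large scale structure'' literally: by the preceding Definition it means that $X$ carries a uniformly bounded open cover, so to prove the Observation I would simply exhibit such a cover for the discrete topology. The obvious candidate is the cover $\mathcal{D}=\{\{x\}:x\in X\}$ of $X$ by all of its singletons, and the entire content of the argument is the verification that $\mathcal{D}\in\mathbb{LSS}$; everything else (that singletons are open in the discrete topology, that the discrete topology is ``non-trivial'') is immediate.

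The key steps, in order, would be: (1) invoke the standing convention that $\mathbb{LSS}$ is non-empty and fix some $\mathcal{U}\in\mathbb{LSS}$; (2) observe that since $\mathcal{U}$ covers $X$, every singleton $\{x\}$ lies in a member of $\mathcal{U}$, so in particular each singleton is bounded; (3) apply condition (2) in the definition of a large scale space with $\mathcal{V}:=\mathcal{D}$ — every element of $\mathcal{D}$ is contained in an element of $\mathcal{U}\in\mathbb{LSS}$, hence $\mathcal{D}\in\mathbb{LSS}$; (4) conclude that $\mathcal{D}$ is a uniformly bounded cover consisting of open subsets of the discrete topology, so the discrete topology is compatible. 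For $|X|\ge 2$ this topology distinguishes points (it is $T_2$) and hence is non-trivial, i.e. it is not the indiscrete topology.

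It remains to justify the word ``simplest'', and here I would make the intended reading explicit at the outset: among the topologies on $X$ that are guaranteed to be compatible with \emph{every} large scale structure carried by $X$, the discrete topology is the canonical and most elementary example. This is exactly what steps (1)--(4) show, because they used nothing beyond the two axioms of a large scale space. By contrast a fixed large scale structure need not be compatible with a coarser non-trivial topology; the indiscrete (``trivial'') topology, whose only open cover is $\{X\}$, is compatible precisely when $X$ itself is bounded, and analogous restrictions exclude other small topologies. So the discrete topology is the always-available baseline, which is all the Observation claims.

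I expect no genuine obstacle in the mathematics — the only delicate points are the (harmless) convention that $\mathbb{LSS}$ is non-empty, the correct use of axiom (2) to pass from an arbitrary uniformly bounded cover to the cover by singletons, and pinning down the informal meaning of ``simplest non-trivial''. Making that last reading precise at the start of the proof is the step most deserving of care; once it is fixed, the verification above is complete.
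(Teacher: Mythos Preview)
Your argument is correct. The paper itself offers no proof of this Observation; it is stated as a self-evident remark immediately after the definition of a compatible topology. Your steps (1)--(4) supply exactly the justification one would want: take any $\mathcal{U}\in\mathbb{LSS}$, note that every singleton refines $\mathcal{U}$, and conclude from axiom (2) that the cover by singletons lies in $\mathbb{LSS}$ and consists of discrete-open sets. Your discussion of ``simplest non-trivial'' is more careful than the paper intends to be; the authors are simply pointing out that one can always fall back on the discrete topology, which is all that is needed for the later Proposition~\ref{TopologyOnEndsIsIndependent} and its Corollary.
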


\begin{Definition}
A \textbf{topological coarse space} is a set equipped with large scale structure and with a compatible topology. Additionally, we assume that the coarse structure is \textbf{coarsely connected}, i.e. the union of two bounded subsets of $X$ is bounded.
\end{Definition}

\begin{Lemma}\label{ExtendingNCCLemma}
If $A$ is a coarsely clopen subset of $X$, then $st(A,\mathcal{U})$ is a coarsely clopen subset of $X$ for each uniformly bounded cover $\mathcal{U}$ of $X$.
\end{Lemma}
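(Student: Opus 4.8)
The plan is to unwind the definition of coarsely clopen directly: fixing an arbitrary uniformly bounded cover $\mathcal{V}$ of $X$, I will show that $st(st(A,\mathcal{U}),\mathcal{V})\cap st(X\setminus st(A,\mathcal{U}),\mathcal{V})$ is bounded, which is exactly what it means for $st(A,\mathcal{U})$ to be coarsely clopen. The key device is to absorb the two covers $\mathcal{U}$ and $\mathcal{V}$ into the single cover $\mathcal{W}:=st(\mathcal{U},\mathcal{V})$, which is uniformly bounded by the first axiom of a large scale space, and then to invoke the hypothesis that $A$ itself is coarsely clopen, applied to $\mathcal{W}$.

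Concretely, I would first check the inclusion $st(st(A,\mathcal{U}),\mathcal{V})\subset st(A,\mathcal{W})$. Given a point $z$ in the left-hand side there are, after unwinding the definitions, a point $a\in A$, a member $U\in\mathcal{U}$ with $a\in U$, a point $y\in U$, and a member $V\in\mathcal{V}$ with $y\in V$ and $z\in V$; since $V$ meets $U$ (at $y$) one has $V\subset st(U,\mathcal{V})$, and also $U\subset st(U,\mathcal{V})$ because $\mathcal{V}$ is a cover, so $a$ and $z$ both lie in the single member $st(U,\mathcal{V})$ of $\mathcal{W}$, whence $z\in st(A,\mathcal{W})$. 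Second, since $A\subset st(A,\mathcal{U})$ we have $X\setminus st(A,\mathcal{U})\subset X\setminus A$, and since every member of $\mathcal{V}$ is contained in a member of $\mathcal{W}$ (again using that $\mathcal{U}$ is a cover) we obtain $st(X\setminus st(A,\mathcal{U}),\mathcal{V})\subset st(X\setminus A,\mathcal{V})\subset st(X\setminus A,\mathcal{W})$.

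Combining the two inclusions, $st(st(A,\mathcal{U}),\mathcal{V})\cap st(X\setminus st(A,\mathcal{U}),\mathcal{V})$ is contained in $st(A,\mathcal{W})\cap st(X\setminus A,\mathcal{W})$, which is bounded because $A$ is coarsely clopen and $\mathcal{W}$ is uniformly bounded; a subset of a bounded set is bounded, so $st(A,\mathcal{U})$ is coarsely clopen. I do not anticipate a genuine obstacle here; the only thing to be careful about is the bookkeeping in the first inclusion — keeping straight which cover each star refers to — together with the routine observations that $\mathcal{U}$ being a cover lets one enlarge $\mathcal{V}$-stars to $\mathcal{W}$-stars and that $A\subset st(A,\mathcal{U})$.
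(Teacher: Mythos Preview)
Your proof is correct and follows essentially the same route as the paper: both arguments set $\mathcal{W}=st(\mathcal{U},\mathcal{V})$, use the inclusion $st(st(A,\mathcal{U}),\mathcal{V})\subset st(A,\mathcal{W})$, and then bound the complement side via $st(A,\mathcal{U})^c\subset A^c$ to land inside $st(A,\mathcal{W})\cap st(A^c,\mathcal{W})$. The only cosmetic difference is that the paper handles the complement by passing through $st(A^c,\mathcal{U})$ and reusing the iterated-star inclusion, whereas you enlarge $\mathcal{V}$-stars to $\mathcal{W}$-stars directly; both yield the same containment.
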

\begin{proof}
Notice $st(st(A,\mathcal{U}),\mathcal{V})\subset st(A,st(\mathcal{U},\mathcal{V}))$ for any two covers $\mathcal{U},\mathcal{V}$.
Therefore $st(st(A,\mathcal{U}),\mathcal{V})\cap st(st(A^c,\mathcal{U}),\mathcal{V})\subset st(A,st(\mathcal{U},\mathcal{V}))\cap st(A^c,st(\mathcal{U},\mathcal{V}))$. Since
$st(A,\mathcal{U})^c\subset A^c\subset st(A^c,\mathcal{U})$ the proof is completed.
\end{proof}

\begin{Lemma}\label{ShrinkingNCCLemma}
If $A$ is a coarsely clopen subset of $X$, then a subset $C$ of $A$ is coarsely clopen provided
 $A\subset st(C,\mathcal{V})$ for some uniformly bounded cover $\mathcal{V}$ of $X$.
\end{Lemma}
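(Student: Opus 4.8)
The plan is to check the definition of coarsely clopen for $C$ head-on. Fix a uniformly bounded cover $\mathcal{U}$ of $X$; I must show $st(C,\mathcal{U})\cap st(X\setminus C,\mathcal{U})$ is bounded. Since $C\subset A$ we have $st(C,\mathcal{U})\subset st(A,\mathcal{U})$ and $X\setminus C=(X\setminus A)\cup(A\setminus C)$, so
$$st(C,\mathcal{U})\cap st(X\setminus C,\mathcal{U})\ \subset\ \big(st(A,\mathcal{U})\cap st(X\setminus A,\mathcal{U})\big)\ \cup\ \big(st(C,\mathcal{U})\cap st(A\setminus C,\mathcal{U})\big).$$
The first set on the right is bounded simply because $A$ is coarsely clopen; this costs nothing and is the same kind of bookkeeping as in Lemma \ref{StLemma}. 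So everything comes down to the second set, and the density hypothesis $A\subset st(C,\mathcal{V})$ must do the work there.

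The route I would take for the second set is to show that $A\setminus C$ is in fact bounded. Given that, $C=A\cap\big(X\setminus(A\setminus C)\big)$ is an intersection of two coarsely clopen sets — $A$ by hypothesis, and $X\setminus(A\setminus C)$ because bounded sets and their complements are coarsely clopen — so $C$ is coarsely clopen by \ref{IntersectionOFCCs}, with no further reference to $\mathcal{U}$. To see $A\setminus C$ is bounded, pick $a\in A\setminus C$. Then $a\in A\subset st(C,\mathcal{V})$, so $a$ lies in a common element of $\mathcal{V}$ with some point of $C$; since also $a\notin C$, this lands $a$ in $st(C,\mathcal{V})\cap st(X\setminus C,\mathcal{V})$. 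Now I would trade this for the boundary of $A$ itself: $a\in A$, so using coarse clopenness of $A$ and, if necessary, replacing $\mathcal{V}$ by $st(\mathcal{V},\mathcal{V})$ (still in $\mathbb{LSS}$, since $\mathbb{LSS}$ is closed under taking stars), one wants to conclude that $a$ is pushed into $st(A,\mathcal{W})\cap st(X\setminus A,\mathcal{W})$ for some $\mathcal{W}\in\mathbb{LSS}$, which is bounded.

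I expect this last trade to be the main obstacle: one has to argue that a point of $A\setminus C$ which is $\mathcal{V}$-adjacent to $C$ cannot drift off to infinity while staying inside $A$, and this is precisely the place where the hypothesis that $C$ is coarsely dense in $A$ is genuinely needed — $C\subset A$ together with coarse clopenness of $A$ alone obviously cannot suffice. Getting the compositions of stars to line up cleanly (each step legitimate because $st(\mathcal{U},\mathcal{V})\in\mathbb{LSS}$ whenever $\mathcal{U},\mathcal{V}\in\mathbb{LSS}$) is the part that must be done carefully. If isolating $A\setminus C$ as a bounded set turns out to be awkward, the fallback is to bound $st(C,\mathcal{U})\cap st(A\setminus C,\mathcal{U})$ directly by the same mechanism, via $st(A\setminus C,\mathcal{U})\subset st\big(st(C,\mathcal{V}),\mathcal{U}\big)\subset st\big(C,st(\mathcal{V},\mathcal{U})\big)$ and then intersecting with $st(C,\mathcal{U})$; either way, the crux is the same coarse-density estimate, after which assembling the two pieces (their union is bounded by coarse connectedness) gives that $C$ is coarsely clopen.
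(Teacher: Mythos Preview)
Your decomposition in the first display is fine, and the first piece is indeed bounded because $A$ is coarsely clopen. The trouble is exactly where you flag it: the claim that $A\setminus C$ is bounded is false in general, and in fact the lemma as stated is false. Take $X=\mathbb{Z}$ with the usual metric, $A=\{0,1,2,\ldots\}$, $C=\{0,2,4,\ldots\}$, and $\mathcal{V}$ the cover by consecutive pairs $\{n,n+1\}$. Then $A$ is coarsely clopen, $C\subset A$, and $A\subset st(C,\mathcal{V})$, yet $A\setminus C=\{1,3,5,\ldots\}$ is unbounded and $C$ is \emph{not} coarsely clopen (already for $\mathcal{U}=\mathcal{V}$ the set $st(C,\mathcal{U})\cap st(C^c,\mathcal{U})$ contains every nonnegative integer). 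Your proposed ``trade'' breaks down precisely here: a point $a\in A\setminus C$ lies in $st(C,\mathcal{V})\cap st(C^c,\mathcal{V})$, but nothing forces it near $A^c$; it can sit arbitrarily deep inside $A$. The fallback has the same defect: the containment $st(A\setminus C,\mathcal{U})\subset st\big(C,st(\mathcal{V},\mathcal{U})\big)$ bounds the bad set from above by a large set rather than a small one, so intersecting with $st(C,\mathcal{U})$ gives nothing useful.

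For comparison, the paper's argument runs from the other side: it sets $C':=(st(A^c,\mathcal{V}))^c$, which is genuinely coarsely clopen by Lemma~\ref{ExtendingNCCLemma}, correctly shows that $C\setminus C'\subset A\cap st(A^c,\mathcal{V})$ is bounded, and then writes $C=C'\cup(C\setminus C')$. The silent step is the asserted inclusion $C'\subset C$; in the same example $C'=\{1,2,3,\ldots\}\not\subset\{0,2,4,\ldots\}$. What would rescue both your argument and the paper's is the two--sided hypothesis $X\setminus C\subset st(X\setminus A,\mathcal{V})$ (equivalently $C'\subset C$): under it, $A\setminus C\subset A\cap st(A^c,\mathcal{V})$ is bounded and your route goes through verbatim. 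That stronger hypothesis is what actually holds in the later applications of the lemma (where $C$ is sandwiched between a coarsely clopen set and its star), so the fix is natural.
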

\begin{proof}
Observe $C':=(st(A^c,\mathcal{V}))^c\subset C$ is coarsely clopen by \ref{ExtendingNCCLemma} and $B:=C\setminus C'\subset A\cap st(A^c,\mathcal{V})$ is bounded as $st(A,V)\cap st(A^c,\mathcal{V})$
is bounded. Adding a bounded set $B$ to a coarsely clopen subset
preserves being coarsely clopen as can be easily seen.
\end{proof}

\begin{Definition}
An \textbf{end} of a topological large scale space $X$ is a family $E$ of unbounded, open and coarsely clopen subsets of $X$ that is maximal with respect to the property of being closed under intersections.
\end{Definition}

\begin{Definition}
Let $\mathcal{T}$ be the topology of a topological large scale space $X$, we \textbf{extend the topology} $\mathcal{T}$ over $X\cup Ends(X)$ as follows: $Y\subset X\cup Ends(X)$ is declared open if $Y\cap X$ is open in $X$ end for each end $E\in Y$ there is an open coarsely clopen set $U$ such that $U\in E$ and $U\subset Y$.
\end{Definition}

\begin{Proposition}\label{TopologyOnEndsIsIndependent}
The topology on $Ends(X)$ is independent of the topology on $X$ as long as the topology is compatible with the coarse structure.
\end{Proposition}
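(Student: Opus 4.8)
The plan is to identify $Ends(X,\mathcal{T})$, for an arbitrary compatible topology $\mathcal{T}$, with an object that depends only on the coarse structure, and then observe that this identification is a homeomorphism for every such $\mathcal{T}$. Let $\mathcal{B}(X)$ be the Boolean algebra of coarsely clopen subsets of $X$ modulo the ideal of bounded subsets: complements of coarsely clopen sets are coarsely clopen by the symmetry of the definition, intersections by \ref{IntersectionOFCCs} and hence so are unions, and the bounded sets form a proper ideal because the coarse structure is coarsely connected. Neither notion mentions a topology, so $\mathcal{B}(X)$ is topology-independent; I will identify $Ends(X,\mathcal{T})$ with the space of ultrafilters on $\mathcal{B}(X)$ equipped with the topology whose basic open sets are $\hat b:=\{\mathcal{F}:b\in\mathcal{F}\}$, $b\in\mathcal{B}(X)$. (If $X$ is bounded both sides are trivial, so assume $X$ unbounded.)

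The only link between $\mathcal{T}$ and the coarse structure is this: fix a uniformly bounded cover $\mathcal{U}$ of $X$ by $\mathcal{T}$-open sets, which exists since $\mathcal{T}$ is compatible. For coarsely clopen $A$, the set $st(A,\mathcal{U})$ is $\mathcal{T}$-open (a union of stars, each a union of open members of $\mathcal{U}$), coarsely clopen by \ref{ExtendingNCCLemma}, contains $A$, and $st(A,\mathcal{U})\setminus A\subseteq st(A,\mathcal{U})\cap st(A^c,\mathcal{U})$ is bounded. Hence $A$ and $st(A,\mathcal{U})$ have the same class in $\mathcal{B}(X)$, so every element of $\mathcal{B}(X)$ has a $\mathcal{T}$-open coarsely clopen representative. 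This is the only place compatibility is used.

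Next I would establish two elementary properties of an arbitrary end $E$ of $(X,\mathcal{T})$, both by the standard device of checking that for unbounded $\mathcal{T}$-open coarsely clopen $V$ the family $E\cup\{V\}\cup\{V\cap W:W\in E\}$ is still closed under intersection and then invoking maximality. First, $E$ is upward closed among unbounded $\mathcal{T}$-open coarsely clopen sets. Second, the dichotomy: for every coarsely clopen $A$, exactly one of $st(A,\mathcal{U})$, $st(A^c,\mathcal{U})$ lies in $E$ --- at least one, since otherwise some $W_0,W_1\in E$ would satisfy $st(A,\mathcal{U})\cap W_0$, $st(A^c,\mathcal{U})\cap W_1$ bounded, making $W_0\cap W_1\in E$ a union of two bounded sets (coarse connectedness is essential here); at most one, since their intersection is bounded. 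Granting these, the assignment $E\mapsto\mathcal{F}_E:=\{[A]\in\mathcal{B}(X):A\in E\}$ is a well-defined bijection from $Ends(X,\mathcal{T})$ onto the ultrafilter space: it is a proper filter because $E$ is closed under intersection with unbounded members; upward closed by the $\mathcal{T}$-open representatives together with the first property; an ultrafilter by the dichotomy; and its inverse takes an ultrafilter $\mathcal{F}$ to the family of all unbounded $\mathcal{T}$-open coarsely clopen $A$ with $[A]\in\mathcal{F}$, whose maximality follows once more from the dichotomy and from the fact, used in the proof of \ref{ShrinkingNCCLemma}, that altering a coarsely clopen set by a bounded set changes neither its class nor its membership in $\mathcal{F}_E$.

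Finally, a neighbourhood basis of $E$ in $Ends(X,\mathcal{T})$ is given by the sets $\langle U\rangle:=\{E':U\in E'\}$ with $U$ a $\mathcal{T}$-open coarsely clopen member of $E$ (the ``$U$ together with all ends containing $U$'' of the Freundenthal-type topology recalled in the introduction), and $E\mapsto\mathcal{F}_E$ carries $\langle U\rangle$ to $\widehat{[U]}$; conversely every $\hat b$ equals $\langle U\rangle$ for a $\mathcal{T}$-open representative $U$ of $b$. Thus $E\mapsto\mathcal{F}_E$ is a homeomorphism, and composing the two homeomorphisms attached to compatible topologies $\mathcal{T}_1,\mathcal{T}_2$ yields the asserted homeomorphism $Ends(X,\mathcal{T}_1)\cong Ends(X,\mathcal{T}_2)$. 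I expect the dichotomy and the maximality check for $\{A:[A]\in\mathcal{F}\}$ to be the crux: these are where one must actually shuttle between arbitrary coarsely clopen sets and $\mathcal{T}$-open ones and use coarse connectedness to forbid ``both halves bounded''; everything else is bookkeeping about ignoring bounded perturbations.
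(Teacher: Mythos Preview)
Your argument is correct and takes a genuinely different route from the paper. The paper fixes one reference topology, the discrete one, and shows that for any compatible topology $\mathcal{T}$ the star map $A\mapsto st(A,\mathcal{U})$ (with $\mathcal{U}$ a $\mathcal{T}$-open uniformly bounded cover) sends a discrete end $E$ to a unique $\mathcal{T}$-end $E'$ and that $Ends(A)=Ends(st(A,\mathcal{U}))$, giving the homeomorphism in two lines. You instead build a single topology-free target, the Stone space of the Boolean algebra $\mathcal{B}(X)$ of coarsely clopen sets modulo bounded sets, and exhibit a homeomorphism $Ends(X,\mathcal{T})\to \mathrm{Ult}(\mathcal{B}(X))$ for every compatible $\mathcal{T}$. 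The shared core is the same observation you isolate: $st(A,\mathcal{U})$ is $\mathcal{T}$-open, coarsely clopen, and differs from $A$ by a bounded set, so every class in $\mathcal{B}(X)$ has a $\mathcal{T}$-open representative.

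What each buys: the paper's comparison-to-discrete argument is shorter and avoids introducing any auxiliary object. Your Stone-duality description is longer but more informative---it \emph{identifies} $Ends(X)$ as a classical object depending only on the coarse structure, which immediately yields compactness, Hausdorffness and total disconnectedness (parts of the next proposition) for free, and resonates with the Specker approach via Stone duality that the introduction mentions. Your dichotomy lemma is exactly the ultrafilter condition unpacked; the paper's ``unique end $E'$ containing all $st(A,\mathcal{U})$'' hides the same verification. One small point worth making explicit in your write-up: when showing $E_{\mathcal{F}_E}=E$ and the upward-closure of $\mathcal{F}_E$, you implicitly use that if $A\in E$ and $A\setminus B$ is bounded with $B$ open coarsely clopen, then $B\in E$; this follows from your ``first property'' together with one more invocation of maximality (add $A\cap B$ to $E$), so you might say so rather than citing only \ref{ShrinkingNCCLemma}.
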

\begin{proof}
Suppose $U$ is an open and uniformly bounded cover of $X$.
If $E$ is an end of $X$ in the discrete topology, then there is a unique
end $E'$ of $X$ containing all sets $st(A,\mathcal{U})$, $A\in E$ (use \ref{ExtendingNCCLemma} and \ref{ShrinkingNCCLemma}).
Therefore $Ends(A)$ and $Ends(st(A,\mathcal{U}))$ are identical for any subset $A$ of $X$ and the proof is completed.
\end{proof}

Recall that a compact space is totally disconnected if its components are singletons. Equivalently, it has a basis of open closed subsets (see \cite{Engel}) which is our preferred point of view.

\begin{Proposition}\label{LSCompactness}
1. $X\cup Ends(X)$ is large scale compact.\\
2. $Ends(X)$ is compact Hausdorff and totally disconnected.\\
3. $X\cup Ends(X)$ is Hausdorff if $X$ is Hausdorff.
\end{Proposition}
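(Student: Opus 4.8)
The plan is to establish the three assertions in order, leaning on the characterization of ends as maximal intersection-closed families of unbounded, open, coarsely clopen sets, and on the stability lemmas \ref{ExtendingNCCLemma} and \ref{ShrinkingNCCLemma}. For part (1), large scale compactness of $X\cup Ends(X)$ should follow the same pattern as the proof that the Higson/Freundenthal compactification is a genuine compactification: one shows that every uniformly bounded cover $\mathcal{U}$ of $X$ extends to an open cover of $X\cup Ends(X)$ which admits a finite subfamily whose union is all of $X\cup Ends(X)$ outside a bounded set, and that the added points $Ends(X)$ form a compact set in the extended topology. Concretely, I would cover each end $E$ by a basic open set $\widehat{U}:=U\cup\{E'\in Ends(X): U\in E'\}$ for $U\in E$ open coarsely clopen, and use that the NCC sets form an ``algebra up to bounded sets'' (\ref{IntersectionOFCCs}) to reduce an arbitrary open cover of $Ends(X)$ to finitely many such $\widehat{U}$'s.

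For part (2) the compactness of $Ends(X)$ is the special case of (1) restricted to the closed subspace $Ends(X)$; I would note that $Ends(X)$ is closed in $X\cup Ends(X)$ because $X$ itself is open (the topology is compatible, so $X$ has a uniformly bounded open cover, hence is open in the extended space). Total disconnectedness is where the coarsely-clopen structure does the real work: given an open coarsely clopen $U\subset X$, the set $\widehat{U}\cap Ends(X)=\{E\in Ends(X): U\in E\}$ is clopen in $Ends(X)$, since its complement is $\{E: (X\setminus U)\in E\}$ up to the bounded overlap, using \ref{CClopenChar1}-style reasoning transported to the large scale setting via \ref{ExtendingNCCLemma} and \ref{ShrinkingNCCLemma}. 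These sets form a basis for the topology on $Ends(X)$ by definition of the extended topology, so $Ends(X)$ has a basis of clopen sets and is therefore totally disconnected (it is compact by the previous point). For the Hausdorff property of $Ends(X)$: if $E_1\neq E_2$ are distinct ends, maximality and closure under intersection force the existence of $U\in E_1$ with $U\notin E_2$; then $X\setminus U$ (adjusted by a bounded set to be coarsely clopen, using \ref{ShrinkingNCCLemma}) lies in $E_2$, and $\widehat{U}, \widehat{X\setminus U}$ separate them.

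For part (3), assuming $X$ Hausdorff I would separate points of $X\cup Ends(X)$ case by case: two points of $X$ are separated inside $X$; a point $x\in X$ and an end $E$ are separated by taking a bounded open neighborhood of $x$ (bounded sets being ``small at infinity'') against a basic $\widehat{U}$ for $U\in E$ chosen to miss that neighborhood, which is possible since $U$ is unbounded and we can shrink it by \ref{ShrinkingNCCLemma} to avoid any prescribed bounded set; and two distinct ends are separated exactly as in the Hausdorff argument for part (2).

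The main obstacle I expect is part (1): proving genuine large scale compactness requires care in matching the extended topology's basic open sets with uniformly bounded covers, and in showing that an arbitrary open cover of the compact-looking piece $Ends(X)$ reduces to finitely many basic sets $\widehat{U_i}$ whose union already swallows all but a bounded part of $X$ — this is essentially a maximality/Zorn argument dual to the construction of ends, and the bookkeeping of ``bounded error'' when intersecting and complementing coarsely clopen sets (Lemmas \ref{StLemma}, \ref{ExtendingNCCLemma}, \ref{ShrinkingNCCLemma}) is where the proof will be least routine. Once (1) is in hand, parts (2) and (3) are comparatively formal.
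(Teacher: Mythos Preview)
Your proposal is correct and follows essentially the same architecture as the paper's proof: a maximality argument for large scale compactness (the paper's Claim~1), the observation that $X$ is open in $X\cup Ends(X)$ (Claim~2), separation of distinct ends starting from some $U\in E_1\setminus E_2$ (Claim~3), and total disconnectedness via the clopen sets $Ends(U)$. One small correction worth flagging: when you separate $E_1\neq E_2$, the complement $X\setminus U$ is \emph{already} coarsely clopen---the adjustment you need is not for that but for \emph{openness} in $X$ (ends consist of open coarsely clopen sets), and the paper handles this by first finding $V\in E_2$ with $U\cap V$ bounded and then passing to stars with an open uniformly bounded cover to manufacture genuinely disjoint open coarsely clopen neighborhoods; you will want to do the same rather than work with $X\setminus U$ directly.
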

\begin{proof}
$X\cup Ends(X)$ being large scale compact means that for any open cover
$\{U_s\}_{s\in S}$ of it there is a finite subset  $F$ of $S$ such that
$Ends(X)\subset \bigcup\limits_{s\in F}U_s$ and $X\setminus \bigcup\limits_{s\in F}U_s$ is a bounded subset of $X$ (see \cite{JDUnifying}).

\textbf{Claim 1:} Given a family $\{U_s\}$ of open coarsely clopen subsets of $X$ such that $Ends(X)\subset \bigcup\limits_{s\in S}Ends(U_s)$, there is a finite subset  $F$ of $S$ such that
$Ends(X)\subset \bigcup\limits_{s\in F}Ends(U_s)$ and $X\setminus \bigcup\limits_{s\in F}U_s$ is a bounded subset of $X$.\\
\textbf{Proof of Claim 1:} 
Consider a uniformly bounded and open cover $\mathcal{V}$ of $X$.
Let $V_s:=st(U_s,\mathcal{V})$, $C_s:=cl(U_s)$ for each $s\in S$. Those are coarsely clopen subsets of $X$ by \ref{ExtendingNCCLemma} and by \ref{ShrinkingNCCLemma}
as $C_s\subset V_s$ for each $s\in S$.

Consider the family $X\setminus \bigcup\limits_{s\in F}C_s$, $F$ a finite subset of $S$. It cannot be extended to an end of $X$ as such an end cannot belong to $\bigcup\limits_{s\in F}Ends(U_s)$, so there is $F$ such that
$B:=X\setminus \bigcup\limits_{s\in F}C_s$ is bounded.

To show $C:=X\setminus \bigcup\limits_{s\in F}U_s$ is bounded
define $B_s$ as $st(U_s,\mathcal{V})\cap st(X\setminus U_s,\mathcal{V})$.
We plan to show $C\subset B\cup\bigcup\limits_{s\in F}B_s$.
Suppose $x\in C\setminus B$. There is $t\in F$ so that $x\in C_t$,
hence $x\in C_t\setminus U_t$. Therefore $x\in st(U_t,\mathcal{V})\cap st(X\setminus U_t,\mathcal{V})=B_t$. Thus $C$ is bounded.

Now, $\bigcup\limits_{s\in F}Ends(U_s)$ must contain $Ends(X)$
as otherwise there is an end $E$ containing each $X\setminus C_s$, $s\in F$, hence also containing $X\setminus \bigcup\limits_{s\in F}C_s$, a contradiction.

\textbf{Claim 2:} $X$ is an open subset of $X\cup Ends(X)$.\\
\textbf{Proof of Claim 2:}
 Suppose $x\in X$ and $E$ is an end of $X$. Pick a uniformly bounded cover $\mathcal{U}$ of $X$ consisting of open subsets. Let $x\in V\in \mathcal{U}$. Notice $cl(V)$ is bounded (as it is contained in $st(V,\mathcal{U})$, so $W:=X\setminus cl(V)\in E$ and $(W\cup Ends(W))\cap V=\emptyset$.

\textbf{Claim 3:} Two different ends of $X$ have disjoint neighborhoods in $X\cup Ends(X)$.\\
\textbf{Proof of Claim 3:}
Suppose $E_1\ne E_2$ are two different ends of $X$. There is $U\in E_1\setminus E_2$, hence there is $V\in E_2$ such that $U\cap V$ is bounded in view of \ref{IntersectionOFCCs}. Choose a uniformly bounded cover $\mathcal{U}$ of $X$ consisting of open subsets. Let $\mathcal{W}:=st(\mathcal{U},\mathcal{U})$. By \ref{ShrinkingNCCLemma}
both $A_U:=U\setminus st(U\cap V,\mathcal{W})$ and $A_V:=V\setminus st(U\cap V,\mathcal{W})$ are coarsely clopen. Notice
$U':=st(A_U,\mathcal{U})\in E_1$, $V':=st(A_V,\mathcal{U})\in E_2$ are disjoint, hence $U'\cup Ends(U')$ is a neighborhood of $E_1$,
$V'\cup Ends(V')$ is a neighborhood of $E_2$ and they are disjoint.

1. Follows from Claim 1.

2. $Ends(X)$ being compact Hausdorff follows from Claims 1-3.
Suppose $V$ is a neighborhood of the end $E$ in  $X\cup Ends(X)$.
Choose an open coarsely clopen subset $U$ so that $U\in E$ and
$U\cup Ends(U)\subset V$. Choose a bounded and open cover $\mathcal{W}$ of $X$ such that $st(U,\mathcal{W})\cap st(X\setminus U,\mathcal{W})$ is bounded. By \ref{ExtendingNCCLemma} the set $st(X\setminus U,\mathcal{W})$
is open and coarsely clopen. Notice $Ends(st(X\setminus U,\mathcal{W}))\cap Ends(U)=\emptyset$ and their union is $Ends(X)$. Thus
$Ends(U)$ is clopen in $Ends(X)$ and $Ends(X)$ is totally disconnected.

3. Follows from Claim 3 and the proof of Claim 2.
\end{proof}

\begin{Proposition}\label{ExtendingOverEnds}
Any continuous, coarse and large scale continuous function $f:X\to Y$ of topological coarse spaces extends to a continuous map $\bar f:X\cup Ends(X)\to Y\cup Ends(Y)$. If $f, g:X\to Y$ are close, then
$\bar f|Ends(X)=\bar g|Ends(Y)$.
\end{Proposition}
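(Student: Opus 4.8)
The plan is to let $\bar f$ agree with $f$ on $X$, viewed as a map into $Y\subseteq Y\cup Ends(Y)$, and to define $\bar f$ on an end $E$ of $X$ by pushing $E$ forward along preimages. The key preliminary is a stability lemma: if $f\colon X\to Y$ is continuous, coarse and large scale continuous and $W\subseteq Y$ is coarsely clopen, then $f^{-1}(W)$ is coarsely clopen, and it is also open whenever $W$ is. Openness is immediate from continuity; for coarse clopenness, fix a uniformly bounded cover $\mathcal U$ of $X$, use large scale continuity to get a uniformly bounded cover $\mathcal W$ of $Y$ with each $f(U)$, $U\in\mathcal U$, contained in a member of $\mathcal W$, and observe that $f$ maps $st(f^{-1}(W),\mathcal U)\cap st(X\setminus f^{-1}(W),\mathcal U)$ into $st(W,\mathcal W)\cap st(Y\setminus W,\mathcal W)$, which is bounded because $W$ is coarsely clopen; since $f$ is coarse, the original set is bounded. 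The same reasoning shows $f$ sends bounded sets to bounded sets and unbounded sets to unbounded sets.

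I then define $\bar f(E):=\{W\subseteq Y: W\text{ is unbounded, open, coarsely clopen and }f^{-1}(W)\in E\}$. Using $f^{-1}(W_1\cap W_2)=f^{-1}(W_1)\cap f^{-1}(W_2)$, \ref{IntersectionOFCCs} and the stability lemma, $\bar f(E)$ is a family of unbounded open coarsely clopen subsets of $Y$ closed under finite intersections, and it is nonempty since $X\in E$ forces $Y\in\bar f(E)$ (if $X$ is bounded there are no ends at all and $\bar f=f$). The heart of the argument is maximality: given an unbounded open coarsely clopen $V_0$ with $f^{-1}(V_0)\notin E$, I must produce some $W\in\bar f(E)$ with $V_0\cap W$ bounded. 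First, $Y\setminus V_0$ must be unbounded, for otherwise $f^{-1}(V_0)$ would be cobounded, hence in $E$ by maximality of $E$. Next, since $f^{-1}(V_0)$ is open coarsely clopen but not in $E$, maximality of $E$ yields $U_0\in E$ with $U_0\cap f^{-1}(V_0)$ bounded (automatic if $f^{-1}(V_0)$ itself is bounded). Put $U_0':=U_0\setminus f^{-1}(V_0)=U_0\cap f^{-1}(Y\setminus V_0)$: this is coarsely clopen (intersection of $U_0$ with the coarsely clopen set $f^{-1}(Y\setminus V_0)$) and unbounded, and its star $U_1:=st(U_0',\mathcal U)$ along an open uniformly bounded cover $\mathcal U$ of $X$ is open and coarsely clopen by \ref{ExtendingNCCLemma} and still lies in $E$ (it contains $U_0'$, which differs from $U_0\in E$ by a bounded set, so $U_1$ meets every member of $E$ in an unbounded set). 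By large scale continuity there is an open uniformly bounded cover $\mathcal W$ of $Y$ with $f(U_1)\subseteq st(f(U_0'),\mathcal W)\subseteq st(Y\setminus V_0,\mathcal W)=:W$. Now $W$ is open and coarsely clopen by \ref{ExtendingNCCLemma}, it is unbounded, and $f^{-1}(W)\supseteq U_1\in E$ gives $W\in\bar f(E)$; but $V_0\cap W\subseteq st(V_0,\mathcal W)\cap st(Y\setminus V_0,\mathcal W)$ is bounded, as required. Hence $\bar f(E)$ is an end of $Y$. I expect this to be the main obstacle: the subtlety is that the image $f(U)$ of a coarsely clopen set need not itself be coarsely clopen, so one cannot simply take $st(f(U_0),\mathcal W)$ and must instead route through the honestly coarsely clopen set $f^{-1}(Y\setminus V_0)$.

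For continuity, at a point $x\in X$ there is nothing to check, since $X$ is open in $X\cup Ends(X)$ by Claim 2 of \ref{LSCompactness}, $Y$ is open in $Y\cup Ends(Y)$, and $\bar f|X=f$ is continuous. At an end $E$: for $W\in\bar f(E)$ the set $U:=f^{-1}(W)$ lies in $E$ and satisfies $f(U)\subseteq W$, and for every end $E'$ of $X$ containing $U$ one has $f^{-1}(W)=U\in E'$, so $W\in\bar f(E')$. Hence $\bar f$ carries the basic neighbourhood of $E$ formed by $U$ and all ends containing $U$ into the basic neighbourhood of $\bar f(E)$ formed by $W$ and all ends containing $W$, which is continuity at $E$.

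Finally, if $f$ and $g$ are close, choose a uniformly bounded open cover $\mathcal W$ of $Y$ with $f(x),g(x)$ in a common member of $\mathcal W$ for all $x$. For $W\in\bar f(E)$ put $U:=f^{-1}(W)\in E$; then $g(U)\subseteq st(f(U),\mathcal W)\subseteq st(W,\mathcal W)=:W'$, an unbounded open coarsely clopen set (\ref{ExtendingNCCLemma}), so $g^{-1}(W')\supseteq U$ shows $W'\in\bar g(E)$. Since $W$ is coarsely clopen, $W'\setminus W\subseteq st(W,\mathcal W)\cap st(Y\setminus W,\mathcal W)$ is bounded, so $g^{-1}(W)$ differs from $g^{-1}(W')\in E$ by a bounded set and therefore also lies in $E$ (using maximality of $E$ and coarse connectedness); hence $W\in\bar g(E)$. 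This proves $\bar f(E)\subseteq\bar g(E)$, and by symmetry $\bar f|Ends(X)=\bar g|Ends(X)$.
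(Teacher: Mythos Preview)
Your proof is correct and considerably more detailed than the paper's own argument, which is extremely terse: it simply observes that for an end $E$ of $Y$ the family $f^{-1}(E)$ consists of unbounded coarsely clopen subsets of $X$, declares that any end of $X$ containing this family is sent to $E$, and asserts continuity without further comment. For the closeness statement the paper argues by contradiction: if $\bar f(E)\neq\bar g(E)$, separate them (as in the proof of Claim~3 in Proposition~\ref{LSCompactness}) by $V\in\bar f(E)$ and $W\in\bar g(E)$ with $st(V,\mathcal U)\cap st(W,\mathcal U)=\emptyset$, and then $f^{-1}(V)$ and $g^{-1}(W)$ are disjoint members of $E$, contradicting closure under intersections.

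Your route is genuinely different and more constructive. Rather than describing $\bar f$ implicitly via the preimage families $f^{-1}(E)$, you write down $\bar f(E)$ explicitly as $\{W:f^{-1}(W)\in E\}$ and verify the end axioms from scratch; the nontrivial step is maximality, which you handle by manufacturing a witness $W=st(Y\setminus V_0,\mathcal W)\in\bar f(E)$ with $V_0\cap W$ bounded. This avoids the circularity latent in the paper's phrasing (which never actually checks that every end of $X$ contains $f^{-1}(E)$ for \emph{some} end $E$ of $Y$). For closeness you prove the inclusion $\bar f(E)\subseteq\bar g(E)$ directly by thickening $W$ to $W'=st(W,\mathcal W)$, which is neater than the paper's separation-and-contradiction. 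Two very minor points worth a sentence each if you polish this: the uniformly bounded cover $\mathcal W$ obtained from large scale continuity need not be open, but can be replaced by $st(\mathcal W,\mathcal V)$ for an open uniformly bounded $\mathcal V$; and your use of the sets $W\cup Ends(W)$ as a neighbourhood \emph{basis} at an end (not merely as neighbourhoods) deserves a one-line justification via compactness and total disconnectedness of $Ends(Y)$ from Proposition~\ref{LSCompactness}, though the paper itself takes this for granted.
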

\begin{proof}
Given an end $E$ of $Y$ the family $f^{-1}(E)$ consists of unbounded coarsely clopen subsets of $X$, so any end of $X$ containing that family is mapped by $\bar f$ to $E$. It is clear $\bar f$ is continuous.

Suppose $f, g:X\to Y$ are close. There is an open uniformly bounded cover $\mathcal{U}$ of $Y$ with the property that $f(x)\in st(g(x),\mathcal{U})$ for all $x\in X$. Suppose $\bar f(E)\ne \bar g(E)$ for some end $E$ of $X$.
As in \ref{ExtendingNCCLemma} there are $V\in \bar f(E)$
and $W\in \bar g(E)$ such that $st(V,\mathcal{U})\cap st(W,\mathcal{U})=\emptyset$. Therefore $f^{-1}(V)\cap g^{-1}(W)=\emptyset$
contradicting $f^{-1}(V),g^{-1}(W)\in E$.
\end{proof}

\begin{Corollary}
If two topological coarse spaces $X$ and $Y$ are coarsely equivalent, then 
$Ends(X)$ is homeomorphic to $Ends(Y)$.
\end{Corollary}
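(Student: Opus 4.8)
The plan is to reduce to the discrete-topology case, where coarse maps are automatically continuous, and then feed the coarse equivalence into Proposition \ref{ExtendingOverEnds}. So the Corollary should be essentially formal once \ref{TopologyOnEndsIsIndependent} and \ref{ExtendingOverEnds} are in hand.

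First I would observe that by Proposition \ref{TopologyOnEndsIsIndependent} the homeomorphism type of $Ends(X)$ depends only on the large scale structure of $X$, not on the chosen compatible topology, and likewise for $Y$. Since the discrete topology is compatible with every large scale structure, we may assume without loss of generality that both $X$ and $Y$ carry the discrete topology. A coarse equivalence of large scale spaces supplies coarse, large scale continuous maps $f\colon X\to Y$ and $g\colon Y\to X$ with $g\circ f$ close to $id_X$ and $f\circ g$ close to $id_Y$; when the target carries the discrete topology every such map is automatically continuous, so now $f$ and $g$ are genuine morphisms of topological coarse spaces.

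Next I would apply Proposition \ref{ExtendingOverEnds} to obtain continuous extensions $\bar f\colon X\cup Ends(X)\to Y\cup Ends(Y)$ and $\bar g\colon Y\cup Ends(Y)\to X\cup Ends(X)$ and restrict them to the coronas, getting continuous maps $\bar f|Ends(X)\colon Ends(X)\to Ends(Y)$ and $\bar g|Ends(Y)\colon Ends(Y)\to Ends(X)$. From the description of the extensions in the proof of \ref{ExtendingOverEnds} one checks directly that $\overline{g\circ f}$ agrees with $\bar g\circ\bar f$ on $Ends(X)$: if $\bar f$ sends an end $E$ of $X$ to the end $E_1$ of $Y$ with $f^{-1}(E_1)\subset E$, and $\bar g$ sends $E_1$ to the end $E'$ of $X$ with $g^{-1}(E')\subset E_1$, then $(g\circ f)^{-1}(E')=f^{-1}(g^{-1}(E'))\subset f^{-1}(E_1)\subset E$, so by the uniqueness implicit in \ref{ExtendingOverEnds} the end $E'$ is exactly $\overline{g\circ f}(E)$. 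Symmetrically $\overline{f\circ g}=\bar f\circ\bar g$ on $Ends(Y)$, and $\overline{id_X}$ restricts to $id_{Ends(X)}$, $\overline{id_Y}$ to $id_{Ends(Y)}$.

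Finally I would invoke the closeness clause of Proposition \ref{ExtendingOverEnds}: from $g\circ f$ close to $id_X$ we get $\bar g\circ\bar f|Ends(X)=\overline{g\circ f}|Ends(X)=id_{Ends(X)}$, and from $f\circ g$ close to $id_Y$ we get $\bar f\circ\bar g|Ends(Y)=id_{Ends(Y)}$. Hence $\bar f|Ends(X)$ and $\bar g|Ends(Y)$ are mutually inverse continuous bijections between the two coronas, i.e. homeomorphisms, which is the claim. The only genuine obstacle is the very first step: a coarse equivalence of topological coarse spaces need not be realized by continuous maps, so one cannot apply \ref{ExtendingOverEnds} naively — the device of passing to the discrete topology via \ref{TopologyOnEndsIsIndependent} is what makes the argument go through, and everything afterward is bookkeeping.
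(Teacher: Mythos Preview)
Your proof is correct and follows exactly the paper's approach: reduce to the discrete topology via Proposition~\ref{TopologyOnEndsIsIndependent} and then apply Proposition~\ref{ExtendingOverEnds}, with your write-up simply spelling out the functoriality and closeness steps that the paper leaves implicit. One small slip: continuity is automatic because the \emph{source} is discrete, not the target; since you have already made both $X$ and $Y$ discrete this does not affect the argument.
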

\begin{proof}
By \ref{TopologyOnEndsIsIndependent} we may assume both $X$ and $Y$ are equipped with the discrete topology. Apply \ref{ExtendingOverEnds}.
\end{proof}

\begin{Lemma}\label{FindingCCSetsForGO}
If $(X,d)$ is a metric space and $f:X\to R$ is glacially oscillating, then for each compact subset $C$ of an open set $U\subset R$ there is a coarsely clopen subset $A$ of $X$ such that $f^{-1}(C)\subset A\subset f^{-1}(U)$.
\end{Lemma}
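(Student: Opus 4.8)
The plan is to exploit the characterization of glacial oscillation in terms of glacial scales, applying it to the gap between the compact set $C$ and the closed complement of $U$. First I would fix a positive $\epsilon$ smaller than the distance (in $\RR$) from $C$ to $\RR\setminus U$; actually, since $C$ may be an arbitrary compact subset of the open set $U$ and $U$ may be unbounded, I would instead enlarge $C$ slightly: pick open sets $C\subset V\subset cl(V)\subset U$ with $cl(V)$ compact-in-$U$ in the relevant sense, or more simply choose $\epsilon>0$ so that the $\epsilon$-neighborhood $B(C,\epsilon)$ in $\RR$ still lies in $U$. Then, since $f$ is glacially oscillating, there is a glacial scale $\mathcal{S}=\{(K_i,n_i)\}_{i\ge 1}$ such that $|f(x_1)-f(x_n)|<\epsilon$ for every $\mathcal{S}$-chain $x_1,\ldots,x_n$.

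Next I would define $A$ to be the set of all points of $X$ that can be joined by an $\mathcal{S}$-chain to some point of $f^{-1}(C)$, together with $f^{-1}(C)$ itself. The containment $f^{-1}(C)\subset A$ is immediate. The containment $A\subset f^{-1}(U)$ follows from the glacial-scale estimate: if $x\in A$ is $\mathcal{S}$-chained to some $y\in f^{-1}(C)$, then $|f(x)-f(y)|<\epsilon$ with $f(y)\in C$, so $f(x)\in B(C,\epsilon)\subset U$. The remaining point is that $A$ is coarsely clopen. For this I would invoke Proposition \ref{CClopenChar2}, specifically the equivalence $(1)\Leftrightarrow(2)\Leftrightarrow(3)$: it suffices to produce a glacial scale $\mathcal{S}'$ such that any $\mathcal{S}'$-chain starting in $A$ stays in $A$. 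But $\mathcal{S}$ itself works, essentially by construction — if $x\in A$ and $x,z$ form a single step of an $\mathcal{S}$-chain, then concatenating with the $\mathcal{S}$-chain witnessing $x\in A$ shows $z\in A$. One small subtlety: the concatenation of two $\mathcal{S}$-chains is an $\mathcal{S}$-chain, which is clear from the definition since the condition is step-by-step; so $A$ is $\mathcal{S}$-closed in the sense of condition $(3)$, hence coarsely clopen.

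**The main obstacle** I anticipate is purely bookkeeping around whether $C$ is merely compact (so $B(C,\epsilon)\subset U$ holds for small $\epsilon$ by compactness and the Lebesgue-number-type argument for the closed set $\RR\setminus U$) versus whether one must treat the degenerate case $C=\emptyset$ (in which case $A=\emptyset$ works). There is also the matter of making sure that the glacial scale $\mathcal{S}$ chosen for $\epsilon$ genuinely has $n_i\to\infty$ and $K_i$ absorbing all bounded sets, which is part of the definition of glacial scale and so is automatic. I do not expect any genuine difficulty; the lemma is essentially an unpacking of Proposition \ref{CClopenChar2} combined with the definition of glacial oscillation, and the proof should be three or four sentences long.
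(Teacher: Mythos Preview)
Your proposal is correct and follows essentially the same route as the paper: choose $\epsilon>0$ with $B(C,\epsilon)\subset U$, pick a glacial scale $\mathcal{S}$ witnessing glacial oscillation at level $\epsilon$, let $A$ be the $\mathcal{S}$-chain saturation of $f^{-1}(C)$, and invoke Proposition~\ref{CClopenChar2} for coarse clopenness. Your extra remarks on concatenation of $\mathcal{S}$-chains and the degenerate case $C=\emptyset$ are fine but not needed; the paper's proof is indeed the three or four sentences you predicted.
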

\begin{proof}
Choose $\epsilon > 0$ such that $B(C,\epsilon)\subset U$. Then choose a glacial scale $\mathcal{S}$ such that $|f(x)-f(y)| < \epsilon$ if $x$ and $y$ can be connected by an $\mathcal{S}$-chain.
Define $A$ as all points in $X$ that can be connected to $f^{-1}(C)$ by an $\mathcal{S}$-chain. Clearly, $f^{-1}(C)\subset A\subset f^{-1}(U)$
and $A$ is coarsely clopen by \ref{CClopenChar2}.
\end{proof}

\begin{Theorem}
If $(X,d)$ is a proper metric space, then $id_X:X\to X$ extends to a homeomorphism from the coarse Freundenthal compactification $CF(X)$ to $X\cup Ends(X)$.
\end{Theorem}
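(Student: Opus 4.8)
The plan is to build the homeomorphism by comparing the two compactifications through their defining function algebras and their coronas. Both $CF(X)$ and $X\cup Ends(X)$ are compactifications of $X$ with zero-dimensional (totally disconnected) corona: the former by Corollary~\ref{CFIsCoronaOfDimZero}, the latter by part~2 of Proposition~\ref{LSCompactness} (with $X$ given the metric topology, which is compatible with the coarse structure). So it suffices to exhibit a continuous surjection between them fixing $X$ in each direction, or equivalently to show that each dominates the other. I would argue domination in both directions.

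\textbf{Step 1: $CF(X)$ dominates $X\cup Ends(X)$.} I would show every continuous $g:X\cup Ends(X)\to[0,1]$ restricts to a glacially oscillating function on $X$; then by the universal property defining $CF(X)$, $g|X$ extends over $CF(X)$, and since this holds for all such $g$ we get the dominating map $CF(X)\to X\cup Ends(X)$. To see $g|X$ is glacially oscillating I would use the characterization in Proposition~\ref{MainCharOfGO}(3): given $\epsilon>0$, cover the compact set $Ends(X)$ by finitely many sets $g^{-1}(I_j)\cap Ends(X)$ with $I_j$ open intervals of length $<\epsilon$; each extends to an open neighborhood $W_j$ in $X\cup Ends(X)$ with $\diam(g(W_j))<\epsilon$, and shrinking, each $W_j\cap X$ can be taken of the form $U_j\cup(\text{bounded})$ where $U_j\in E$ for the relevant ends — hence $U_j$ is an open coarsely clopen subset of $X$. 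Finitely many of these cover the complement of a bounded set (this is exactly the large scale compactness statement, Claim~1 in the proof of Proposition~\ref{LSCompactness}, applied to the open coarsely clopen cover), and $\diam(g(U_j))<\epsilon$, giving condition~(3).

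\textbf{Step 2: $X\cup Ends(X)$ dominates $CF(X)$.} Here I would show every continuous glacially oscillating $f:X\to[0,1]$ extends continuously over $X\cup Ends(X)$; this produces the map $X\cup Ends(X)\to CF(X)$, and combined with Step~1 and the fact that a compactification of $X$ dominating and dominated by another, both fixing $X$, must coincide, we are done. The extension is defined on an end $E$ by choosing, for each $\epsilon>0$, via Lemma~\ref{FindingCCSetsForGO} a coarsely clopen set, and observing that $f$ is forced to be nearly constant on the members of $E$: for $\epsilon>0$ and the associated glacial scale $\mathcal S$, the sets $A_j:=\{x: x \text{ is } \mathcal S\text{-connected to } f^{-1}(I_j)\}$ (with $I_j$ a finite open interval cover of $[0,1]$ of mesh $<\epsilon$) are open coarsely clopen by Proposition~\ref{CClopenChar2}, they cover $X$ minus a bounded set, and exactly one of them, call it $A_{j(E)}$, lies in $E$; set the limit value of $f$ along $E$ to be (the limit over $\epsilon\to 0$ of the midpoints of) $I_{j(E)}$. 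One checks this is well-defined (nested choices force the intervals to shrink to a point) and that the resulting function $\bar f$ on $X\cup Ends(X)$ is continuous: a basic neighborhood $U\cup Ends(U)$ of $E$ with $U\in E$ coarsely clopen can be taken inside $A_{j(E)}$ for small $\epsilon$, so $\bar f$ varies by less than $\epsilon$ there.

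\textbf{Main obstacle.} The routine parts are the algebra of coarsely clopen sets and the use of Propositions~\ref{MainCharOfGO}, \ref{CClopenChar2} and Lemma~\ref{FindingCCSetsForGO}; the delicate point is matching up the two coronas correctly — namely checking that the map $CF(X)\to X\cup Ends(X)$ from Step~1 actually sends a point $\xi$ of the $CF(X)$-corona to the end consisting of all open coarsely clopen $U$ whose closure in $CF(X)$ contains $\xi$, and that this is genuinely a point of $Ends(X)$ (maximality under intersection must be verified, using that the coronas are zero-dimensional so these closures form an ultrafilter-like family). Equivalently, the hard part is showing the two dominating maps are mutually inverse rather than merely existing; this comes down to the fact that on a zero-dimensional corona the coarsely clopen subsets of $X$ separate points of the corona (via Proposition~\ref{CClopenChar1}), so a point of either corona is determined by which coarsely clopen sets "converge" to it, and both constructions use the same family.
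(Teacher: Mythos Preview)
Your proposal is correct and follows the same overall plan as the paper --- establish mutual domination between the two compactifications --- but the paper's execution is much terser and organised differently.

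For your Step~2 the paper gives a cleaner argument. Rather than building the extension of $f$ via overlapping coarsely clopen sets $A_j$ indexed by a mesh of intervals, the paper simply shows that for each end $E$ the set $\bigcap_{A\in E}cl(f(A))$ is a single point (nonempty by compactness and the finite-intersection property of $E$). The proof is a short contradiction: if two values $a,b$ survive, apply Lemma~\ref{FindingCCSetsForGO} with $C=[a-\epsilon,a+\epsilon]$ and $U=(a-3\epsilon,a+3\epsilon)$, $\epsilon=|a-b|/4$, to get a coarsely clopen $A'$ with $f^{-1}(C)\subset A'\subset f^{-1}(U)$; then either $A'$ or its complement, intersected with a suitable $A\in E$, lies in $E$ and excludes one of $a,b$ from the intersection. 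This sidesteps the imprecision in your version where you assert ``exactly one of them lies in $E$'' --- the $A_j$ overlap, so several can, though this does not actually damage your argument.

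Your Step~1 does not appear in the paper at all: the paper's ``It suffices to show'' takes for granted that once every glacially oscillating function extends over $X\cup Ends(X)$, the identity extends to a homeomorphism. What is being used implicitly is that distinct ends are already separated by open coarsely clopen sets (Claim~3 in the proof of Proposition~\ref{LSCompactness}), hence by continuous glacially oscillating functions (Proposition~\ref{CClopenChar2}), so the induced continuous map $X\cup Ends(X)\to CF(X)$ is injective on the corona and therefore a homeomorphism. Your explicit treatment of this direction via Proposition~\ref{MainCharOfGO}(3) and large-scale compactness is a legitimate alternative and arguably fills in what the paper leaves to the reader.
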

\begin{proof}
It suffices to show that for any glacially oscillating function $f:X\to [0,1]$, for any end $E$ of $X$ the set $C:=\bigcap\limits_{A\in E}cl(f(A))$ consists of one point. Suppose there are two different points $a,b\in C$ and put $\epsilon=|a-b|/4$. 
By \ref{FindingCCSetsForGO} there is a coarsely clopen subset $A'$ of $X$
such that $f^{-1}[a-\epsilon,a+\epsilon]\subset A'\subset f^{-1}(a-3\epsilon,a+3\epsilon)$. The set $K:=f^{-1}[a-\epsilon,a+\epsilon]$ cannot be bounded as in such case removing it from elements of $E$ would contradict
$a\in C$. Similarly, the complement of $A'$ cannot be bounded.
Since $A'\notin E$, there is $A\in E$ such that $A\cap A'$ is bounded.
Hence $A\setminus A'\in E$ and $b\notin cl(f(A\setminus A'))$, a contradiction.
\end{proof}

\begin{Remark}
Now we can extend the definition of the space of ends of arbitrary group $G$ by giving it the following large scale structure: uniformly bounded covers are those that refine covers of the form $\{g\cdot F\}_{g\in G}$ for some finite subset $F$ of $G$. The same can be done for locally compact topological groups. Instead of $F$ being finite we consider neighborhoods of $1_G$ with compact closure.
\end{Remark}


\begin{thebibliography}{99}

\bibitem{BH} M. Bridson and A. Haefliger, \emph{Metric spaces of non-positive curvature}, Springer- Verlag, Berlin, 1999.

\bibitem{BDHM}  N.Brodskiy, J.Dydak, J.Higes, and A.Mitra, \emph{   Dimension zero at all scales},
Topology and its Applications, 154 (2007), 2729--2740.

\bibitem{BDM} N.Brodskiy, J.Dydak, and A.Mitra, \emph{Coarse structures and group actions},
Colloquium Mathematicum 111 (2008), 149--158.

\bibitem{Corn} Yves Cornulier, \emph{On the space of ends of infinitely generated groups}, Topology Appl. 263 (2019) 279-298

\bibitem{DK} C. Drutu, M. Kapovich, \emph{Geometric group theory}, Colloquium publications, Vol. 63, American Mathematics Society (2018).

\bibitem{DH}   J.Dydak and C.Hoffland, \emph{An alternative definition of coarse structures},
Topology and its Applications 155 (2008) 1013--1021

\bibitem{JDEnds} Jerzy Dydak, \emph{Ends and simple coarse structures}, Mediterranean Journal of Mathematics (2020) 17: 4, arXiv:1801.09580

\bibitem{JDUnifying} Jerzy Dydak, \emph{Unifying large scale and small scale geometry}, 	arXiv:1803.09154 [math.MG]

\bibitem{DM} J. Dydak, A. Mitra, \emph{Large scale absolute extensors}, Topology and Its Applications 214 (2016), 51--65. arXiv:1304.5987

\bibitem{DW} Jerzy Dydak, Thomas Weighill, \emph{Extension Theorems for Large Scale Spaces via Coarse Neighbourhoods}, Mediterranean Journal of Mathematics, (2018) 15: 59. \url{https://doi.org/10.1007/s00009-018-1106-z}

\bibitem{Engel}
R. Engelking, \emph{Theory of dimensions finite and infinite}, Sigma Series in Pure Mathematics,
vol. 10, Heldermann Verlag, 1995.

\bibitem{Geog} R. Geoghegan, \emph{Topological methods in group theory}, Graduate Texts in Mathematics, vol. 243, Springer, New York, 2008.

\bibitem{Grom}
M. Gromov, \emph{Asymptotic invariants for infinite groups}, in
Geometric Group Theory, vol. 2, 1--295, G. Niblo and M. Roller,
eds., Cambridge University Press, 1993.

\bibitem{MM} Mike Mihalik, \textit{A Manual for Ends, Semistability and Simple Connectivity at Infinity for Groups and Spaces}, to appear

 \bibitem{Peschke} Georg Peschke, \emph{The Theory of Ends},
Nieuw Archief voor Wiskunde, 8 (1990), 1--12

\bibitem{Sp} E. Specker, \emph{Endenverb\"ande von Raumen und Gruppen}, Math. Ann. 122, (1950). 167--174.

\bibitem{Roe lectures}
J. Roe, \emph{Lectures on coarse geometry}, University Lecture
Series, 31. American Mathematical Society, Providence, RI, 2003.

\end{thebibliography}
\end{document}